\numberwithin{equation}{section}
\newtheoremstyle{plainNoItalics}{}{}{\normalfont}{}{\bfseries}{.}{ }{}
\theoremstyle{plain}
\theoremstyle{plainNoItalics}
\newtheorem{rem}{Remark}[section]
\newtheorem{exa}{Example}[section]
\newtheorem{lemma}{Lemma}[section]
\newtheorem{Theorem}{Theorem}[section]
\newtheorem{Algorithm}{Algorithm}[section]
\newtheorem{Prop}{Proposition}[section]
\newtheorem{Def}{Definition}[section]
\begin{document}

\baselineskip=1.38pc
\linespread{1.0}

\begin{center}
\renewcommand{\thefootnote}{\fnsymbol{footnote}}
{\bf
Provably convergent Newton--Raphson methods for recovering primitive variables with applications to  physical-constraint-preserving Hermite WENO schemes for relativistic hydrodynamics\footnotemark[1]
}
\footnotetext[1]{The work of the first and second authors is partially supported by National Key R\&D Program of China (Grant No.~2022Y-FA1004500). The work of the third author is partially supported by Shenzhen Science and Technology Program (Grant No.~RCJC20221008092757098) and National Natural Science Foundation of China (Grant No.~12171227).}
\renewcommand{\thefootnote}{\arabic{footnote}}
\vspace{6mm}

Chaoyi Cai\footnotemark[1],~~ Jianxian Qiu\footnotemark[2],~~ Kailiang Wu\footnotemark[3]

\footnotetext[1]{School of Mathematical Sciences, Xiamen University, Xiamen, Fujian 361005, P.R.~China. E-mail: caichaoyi@stu.xmu.edu.cn.}
\footnotetext[2]{School of Mathematical Sciences and Fujian Provincial Key Laboratory of Mathematical Modeling and High-Performance Scientific Computing, Xiamen University, Xiamen, Fujian 361005, P.R.~China. E-mail: jxqiu@xmu.edu.cn.}
\footnotetext[3]{Department of Mathematics and SUSTech International Center for Mathematics, Southern University of Science and Technology, and National Center for Applied Mathematics Shenzhen (NCAMS), Shenzhen 518055, P.R.~China. E-mail: wukl@sustech.edu.cn.}
\end{center}

\vspace{6mm}

\noindent
{\bf Abstract.} The relativistic hydrodynamics (RHD) equations have three crucial intrinsic physical constraints on the primitive variables: positivity of pressure and density, and subluminal fluid velocity. However, numerical simulations can violate these constraints, leading to nonphysical results or even simulation failure. 
Designing genuinely physical-constraint-preserving (PCP) schemes is very difficult, as the primitive variables cannot be explicitly reformulated using conservative variables due to relativistic effects. In this paper, we propose three efficient Newton--Raphson (NR) methods for robustly recovering primitive variables from conservative variables. Importantly, we rigorously prove that these NR methods are always convergent and PCP, meaning they preserve the physical constraints throughout the NR iterations. 
The discovery of these robust NR methods 
and their PCP convergence analyses are highly nontrivial and technical. 
As an application, we apply the proposed NR methods to 
 design PCP finite volume Hermite weighted essentially non-oscillatory (HWENO) schemes for solving the RHD equations. 
Our PCP HWENO schemes incorporate high-order HWENO reconstruction, a PCP limiter, and strong-stability-preserving time discretization. We rigorously prove the PCP property of the fully discrete schemes using convex decomposition techniques. Moreover, we suggest the characteristic decomposition with rescaled eigenvectors and scale-invariant nonlinear weights to enhance the performance of the HWENO schemes in simulating large-scale RHD problems. 
Several demanding numerical tests are conducted to demonstrate the robustness, accuracy, and high resolution of the proposed PCP HWENO schemes and to validate the efficiency of our NR methods.

\vspace{6mm}

\noindent
{\bf Keywords:} Relativistic hydrodynamics; Physical-constraint-preserving; Newton--Raphson method; Hermite WENO;  High-order accuracy; Finite volume scheme. 

\newpage

\section{Introduction}
	Relativistic fluid flows widely appear in many astrophysical phenomena, such as gamma-ray bursts, supernova explosions, extragalactic jets, and accretion onto black holes. 
	When the velocity of fluid is close to the speed of light, 
	the classic compressible Euler equations are no longer valid due to 
	the relativistic effect. 
	The governing equations of the $d$-dimensional special relativistic hydrodynamics (RHD) can be written into a system of conservation laws as follows 
\begin{equation}\label{eq:intro_rhd}
\frac{\partial \bm{U}}{\partial t}+\sum_{i=1}^{d}\frac{\partial \bm{F}_i\left(\bm{U}\right)}{\partial x_i}={\bf 0},
\end{equation}
where the conservative vector and flux vectors are given by 
\begin{align}\label{eq:intro_conservvecter}
\bm{U}&=\left(D,m_1,\dots,m_d,E\right)^\top,
\\
\label{eq:intro_fluxvecter}
	\bm{F}_i&=\left(Dv_i,m_1v_i+p\delta_{1,i},\dots,m_dv_i+p\delta_{d,i},m_i\right)^\top.
\end{align}
The conservative variables $D,\bm{m}=(m_1,\dots,m_d)$, and $E$ represent the mass density, the momentum vector, and the total energy, respectively. Let $\bm{Q}=\left(\rho,\bm{v},p\right)^\top$ denote the primitive variable vector, where $\bm{v}=(v_1,\dots,v_d)$ is the fluid velocity vector, and $\rho$, $p$ represent the rest-mass density and the kinetic pressure, respectively. Then $\bm{U}$ can be calculated from $\bm{Q}$ by 
\begin{eqnarray}\label{eq:intro_conver2conserv}
	\left\{
	\begin{aligned}[c]
		&D=\rho W, \\
		&\bm{m}=DhW\bm{v},\\
		&E=DhW-p,\\
		&h=1+e+\frac{p}{\rho},\\
		&W=1/\sqrt{1-|{\bm v}|^2},
	\end{aligned}
	\right.
\end{eqnarray}
along with the ideal equation of state (EOS) considered in this paper:  
\begin{equation*}\label{cond:gamma law}
	e=\frac{p}{\left(\gamma-1\right)\rho},
\end{equation*}
 where $\gamma \in \left(1,2\right]$ is the adiabatic index. In (\ref{eq:intro_conver2conserv}), the velocity is normalized such that the speed of light is $1$, $W$ denotes the Lorentz factor, $h$ is the specific enthalphy, and $e$ represents the specific internal energy. 
 As seen from \eqref{eq:intro_conver2conserv} and \eqref{eq:intro_fluxvecter}, both the conservative vector ${\bm U}$ and the flux ${\bm F}_i$ are explicit functions of $\bm Q$. 
 However, neither ${\bm F}_i$ nor $\bm Q$ can be explicitly expressed by ${\bm U}$. 
 In the computations, in order to evaluate ${\bm F}_i({\bf U})$ and the eigenvalues/eigenvectors of its Jacobian matrix $\frac{\partial {\bm F}_i({\bf U})}{\partial {\bm U}}$, we have to first recover the corresponding primitive variables $\bm Q$ from ${\bf U}$.  
 This recovery procedure is complicated and typically requires one to numerically solve nonlinear algebraic equations. 
 Several recovery algorithms were proposed in the past decades. 
 Most algorithms calculate one intermediate variable first and then compute other primitive variables using this intermediate variable. An algorithm based on the baryon number density as the intermediate variable was constructed in \cite{dolezal1995relativistic}. 
 Several Newton--Raphson (NR) and analytical algorithms were proposed in \cite{riccardi2008primitive} with 
 a variety of intermediate variables such as the pressure, the velocity, and the Lorentz factor. In \cite{ryu2006equation}, three analytical algorithms were designed for recovering the primitive variables for three different EOS. 
 However, the convergence of existing NR algorithms is not guaranteed in theory, while the analytical algorithms often suffer from low accuracy and high computational cost. Recently, three robust linearly convergent iterative recovery algorithms were studied in \cite{chen2022physical}.


It is often extremely difficult to obtain the analytical solutions of the RHD system \eqref{eq:intro_rhd} due to the high nonlinearity. As a result, numerical simulation has become an effective and practical approach to study RHD. Various numerical methods have been developed for solving the RHD equations over the past few decades, including but not limited to finite difference methods \cite{wilson2007relativistic,dolezal1995relativistic,zhang2006ram,radice2012thc,he2012adaptive,2015High}, finite volume methods \cite{mignone2005hllc,tchekhovskoy2007wham,balsara2016subluminal}, discontinuous Galerkin (DG) methods \cite{radice2011discontinuous,zhao2013runge,teukolsky2016formulation,kidder2017spectre}, and so on. The interested readers are also referred to the review papers 
 \cite{font2008numerical,marti2003numerical,marti2015grid} for more related developments in this direction.

The RHD equations (\ref{eq:intro_rhd}) are a nonlinear hyperbolic system of conservation laws, which can result in discontinuities in the entropy solution even with smooth initial conditions. As well-known, this type of equations are difficult to solve due to the possibility of generating numerical oscillations near the discontinuities. Series of high-order numerical schemes based on essentially non-oscillatory (ENO) and weighted ENO (WENO) methods have been developed for solving such hyperbolic conservation laws. The history of ENO and WENO schemes can be traced back to 1985, when Harten introduced the total variation diminishing (TVD) concept \cite{harten1983high}, which formed the basis of the ENO schemes  \cite{harten1987preliminary,harten1987uniformly}. In 1994, Liu, Osher, and Chan proposed the first WENO scheme \cite{liu1994weighted}. Jiang and Shu then improved upon it by giving the framework for the design of the smooth indicators and nonlinear weights in 1996 \cite{jiang1996efficient}. This kind of nonlinear weights enable the attainment of uniformly higher-order accuracy in smooth solutions, while simultaneously avoiding the emergence of numerical oscillations in discontinuous solutions. Since then, a lot of researches have sprung up on WENO schemes, including but not limited to  \cite{1999Weighted,levy1999central,shi2002technique,2011High}. Recently, Zhu and Qiu \cite{zhu2016new} proposed a simpler WENO construction, which is a convex combination of a fourth-degree polynomial and two linear polynomials with any three positive linear weights that sum to one. To address the issue of wide stencils in WENO schemes, Qiu and Shu developed Hermite WENO (HWENO) schemes  \cite{qiu2004hermite,qiu2005hermite}, which can achieve higher-order accuracy with the same reconstruction stencils as WENO schemes. 
To reduce computational cost in WENO reconstruction, several hybrid WENO schemes \cite{costa2007multi,li2010hybrid,zhao2019new} have been proposed. These schemes use linear schemes directly in smooth regions while still utilizing WENO schemes in the discontinuous regions. Recently, Zhao, Chen, and Qiu  proposed several new finite volume HWENO schemes \cite{zhao2020hybrid,zhao2020hermite}. Among them, the hybrid HWENO schemes \cite{zhao2020hermite} incorporate the thoughts behind hybrid schemes and the limiters in DG  methods, making this kind of schemes more efficient and easier to implement.

Although the WENO and HWENO schemes are stable in many numerical simulations, they are generally not physical-constraint-preserving (PCP), namely, they do not always preserve the intrinsic physical constraints: for the RHD equations \eqref{eq:intro_rhd}, such constraints include the positivity of pressure and rest-mass density, as well as the subluminal constraint on the fluid velocity. 
For the RHD equations \eqref{eq:intro_rhd}, all the admissible states ${\bm U}$ satisfying these constraints form the following set 
\begin{equation}\label{eq:G0}
	\mathcal{G}_0=\left\{\bm{U}=\left(D,\bm{m},E\right)^\top:~\rho(\bm{U})>0,~ p(\bm{U})>0,~ |{\bm v}(\bm{U})|< 1 \right\}.
\end{equation}
In fact, preserving the numerical solutions in this set $\mathcal{G}_0$ is essential, because if any of these constraints 
are violated in the numerical computations, the corresponding discrete equations would become ill-posed and the simulation would break down. 
As we mentioned, in the RHD case, the primitive variables cannot be explicitly expressed by $\bm U$, so that the three functions $\rho(\bm{U})$, $p(\bm{U})$, and ${\bm v}(\bm{U})$ in \eqref{eq:G0} are implicit. This makes the study of PCP schemes for RHD nontrivial and more difficult than the non-relativistic hydrodynamics.   
In recent years, there are lots of efforts on developing high-order PCP or bound-preserving schemes 
for hyperbolic conservation laws via two types of limiters. The first type of limiter can be used in the finite volume and DG frameworks and was first proposed by Zhang and Shu to keep the maximum-principle-preserving property for scalar conservation laws \cite{zhang2010maximum} and the positivity-preserving property for the non-relativistic Euler equations \cite{zhang2010positivity}. The second type of limiter is a flux-correction limiter that modifies the high-order flux with a first-order PCP flux to obtain a new flux with high accuracy and PCP property; cf.~\cite{xu2014parametrized,xiong2016parametrized,hu2013positivity}. The interested readers are referred to the reviews \cite{xu2017bound,shu2016bound} for more related works. 

The first PCP work on RHD was made in \cite{2015High}, which provided a rigorous proof for the PCP property of the local Lax--Friedrichs flux and proposed the PCP finite difference WENO schemes for RHD. 
The following explicit equivalent form of the admissible state set \eqref{eq:G0} was also proved in \cite{2015High} 
\begin{equation}\label{eq:G1}
	\mathcal{G}=\left\{\bm{U}=\left(D,\bm{m},E\right)^\top:~ D>0,~ g(\bm{U}):=E-\sqrt{D^2+| \bm{m} |^2}>0\right\},
\end{equation}
where $g(\bm{U})$ is a concave function, and moreover, $\mathcal{G} = \mathcal{G}_0$ is a convex set. 
Qin, Shu, and Yang developed a bound-preserving DG method for RHD in \cite{qin2016bound}. The PCP Lagrangian finite volume schemes were designed in \cite{ling2019physical}. A PCP central DG method was proposed for RHD  with a general EOS in \cite{wu2016physical}. The framework for designing provably high-order PCP methods for general RHD was established in \cite{wu2017design}. 
More recently, a minimum principle on specific entropy and high-order accurate invariant region preserving numerical methods were studied for RHD in \cite{wu2021minimum}. 
A PCP finite volume WENO method was developed on unstructured meshes in \cite{chen2022physical}, where three robust algorithms were also introduced for recovering the primitive variables. 
Besides, the PCP schemes were also studied for the relativistic magnetohydrodynamics (MHD) in \cite{wu2017admissible,wu2021provably}. Most notably, the theoretical analyses in  \cite{wu2017admissible,wu2018positivity} based on the geometric quasilinearization technique \cite{WuShu2021GQL} revealed that the PCP property of MHD schemes is strongly connected with a discrete divergence-free condition on the magnetic field. 
In addition,  a flux limiter was proposed in \cite{radice2014high} to preserve the positivity of rest-mass density, and a subluminal reconstruction technique was designed for relativistic MHD in \cite{balsara2016subluminal}.  
As we have mentioned,  
neither the flux ${\bm F}_i$ nor $\bm Q$ can be explicitly expressed by ${\bm U}$ in the RHD case. 
In order to evaluate the flux and the eigenvalues/eigenvectors of its Jacobian matrix, we have to recover the primitive variables $\bm Q$ from ${\bf U}$. 
This recovery procedure requires solving a nonlinear algebraic equation by some root-finding algorithms. 
Although the PCP property $\bm{U} \in \mathcal{G}$ guarantees the existence and uniqueness of the corresponding physical primitive variables in theory \cite{2015High}, it however does not ensure  
the convergence of the root-finding algorithms, nor the physical constraints of the computed primitive variables obtained by the root-finding algorithms.

This paper aims to design genuinely PCP schemes. We first propose three efficient PCP NR methods for robustly recovering primitive variables from conservative variables. 
Importantly, we rigorously prove that these NR methods are always convergent and PCP, meaning they preserve the physical constraints throughout the NR iterations. 
The discovery of these robust NR methods 
and their PCP convergence analyses are highly nontrivial 
and become the most significant contribution of this work. 
In particular, our analyses involve careful and detailed investigations of the convexity/concavity structures of the iterative functions.  
As applications, we apply the proposed NR methods to 
develop robust and efficient high-order PCP finite volume HWENO schemes for solving the RHD equations \eqref{eq:intro_rhd}.  Our approach builds upon the NR methods, the hybrid high-order HWENO reconstruction proposed in \cite{zhao2020hermite}, a PCP limiter, and strong-stability-preserving Runge--Kutta method for time discretization. 
We rigorously prove the PCP property of our HWENO schemes under a CFL condition, by using the Lax--Friedrichs splitting property and convex decomposition techniques. 
Moreover, we suggest 
the rescaled eigenvectors for characteristic decomposition and the scale-invariant nonlinear weights to 
address the issue of numerical oscillations arising from the wide range of variable spans in the RHD equations and enhance the performance of the HWENO schemes in simulating large-scale RHD problems. 
We implement the proposed one-dimensional (1D) and two-dimensional (2D) PCP HWENO schemes, and 
 provide extensive challenging numerical tests to demonstrate the robustness, accuracy, and high resolution of our PCP HWENO schemes and to validate the efficiency of our NR methods.

This paper is organized as follows. 
Section 2 proposes three efficient PCP convergent NR methods for recovering primitive variables and provides the theoretical analysis on their convergence and PCP property. 
Section 3 introduces the 1D PCP finite volume HWENO method for RHD and provides the theoretical analysis of the PCP property. Section 4 extends the method and analysis to the RHD systems in two dimensions and the cylindrical coordinates. Section 5 conducts several numerical experiments to  demonstrate the PCP property, accuracy, and effectiveness of the PCP HWENO schemes and NR methods. Section 6 gives the conclusion of this paper.

\section{Efficient PCP convergent Newton--Raphson methods for recovering primitive variables}\label{sec:Pressure recovering algorithms}

In sections \ref{sec:1D HWENO scheme} and \ref{sec:2D HWENO scheme}, we will develop a PCP HWENO method that ensures the numerical solutions of the conservative variables ${\bf U}$ in the admissible state set $\mathcal{G}$. 
According to the analysis in \cite{2015High}, when $\bm{U} \in \mathcal{G}$, the corresponding primitive variables $\bm{Q}=(\rho,\bm{v},p)^\top$ are uniquely determined and satisfy the physical constraints  
\begin{equation}\label{key433}
	\rho >0, \quad p>0, \quad |\bm{v}|< 1. 
\end{equation}

When computing the fluxes $\bm{F}_i(\bm{U})$ with the conservative vectors $\bm{U} \in \mathcal{G}$, it is necessary to recover the primitive variables $\bm{Q}=(\rho,\bm{v},p)^\top$ from $\bm{U}$. 
This recovery procedure requires solving a nonlinear algebraic equation by some root-finding algorithms, because there is no explicit expressions for $\bm{Q}$ in terms of $\bm{U}$. 
In the past decades, many algorithms recovering the primitive variables were proposed; see  
\cite{riccardi2008primitive} for a review. 

The PCP property $\bm{U} \in \mathcal{G}$ guarantees the uniqueness of the corresponding physical primitive variables in theory.  
However, it does not ensure  
the convergence of the root-finding algorithms, nor the physical constraints \eqref{key433} for the primitive variables computed by the root-finding algorithms. 
We would like to seek iterative root-finding algorithms that are convergent and PCP, namely, preserve the 
physical constraints \eqref{key433} during the iteration process. In particular, we 
are interested in recovering the pressure $p({\bm U})$ first; 
once $p({\bm U})$ is recovered, the velocity vector and the density can be calculated sequentially as follows:
\begin{equation}\label{key321}
	\bm{v} ({\bm U})=\bm{m}/(E+p({\bm U})), \qquad \rho ({\bm U})=D\sqrt{1-| \bm{v}({\bm U}) |^2}.
\end{equation}
We observe that, for a given ${\bm U} \in {\mathcal G}$ satisfying $D>0$ and $E>\sqrt{D^2+|{\bm m}|^2}$, if the recovered $p({\bm U})>0$, then the calculation \eqref{key321} leads to 
$|\bm{v}({\bm U})|<1$ and  $\rho ({\bm U})>0$. Hence we propose the following definitions. 

\begin{Def}
	Given ${\bm U}\in {\mathcal G}$, 
	a pressure-recovering algorithm is called PCP, if all the approximate pressures in the iterative sequence $\left\{p_{n}\right\}_{n\ge 1}$ are always positive.
\end{Def}

\begin{Def}
	Given ${\bm U}\in {\mathcal G}$, 
	a pressure-recovering algorithm is called convergent, if  the iterative sequence $\left\{p_{n}\right\}_{n\ge 1}$ converges to the physical pressure $p({\bm U})$, namely, 
	$\lim\limits_{n\to +\infty} p_n = p({\bm U})$. 
\end{Def} 

In \cite{chen2022physical}, three PCP convergent pressure-recovering algorithms were developed. 
Those algorithms are based on bisection, fixed-point iteration, and a hybrid iteration combining them, 
respectively. Consequently, those algorithms  in \cite{chen2022physical} have only a first order of linear convergence.
In this section, we develop three faster pressure-recovering algorithms, 
which are Newton--Raphson (NR) method and have a convergence order of 2. 
Furthermore, we will rigorously prove that the three proposed NR methods are both PCP and convergent.

\subsection{NR-I method: monotonically convergent PCP NR iteration}\label{algor:newnewton}

As shown in \cite{2015High}, the true physical pressure $p({\bm U})$ corresponding to a conservative vector 
${\bm U}=(D, {\bm m}, E)^\top$ satisfies the following nonlinear algebraic equation: 
\begin{equation}\label{eq:primitive}
	\Phi(p) :=\frac{p}{\gamma-1}-E+\frac{| \bm{m} |^2}{E+p}+D\sqrt{1-\frac{| \bm{m} |^2}{(E+p)^2}}=0. 
\end{equation}
When ${\bm U} \in {\mathcal G}$, we can show that the function $\Phi(p)$ is strictly monotonically increasing with respect to $p \in [0, +\infty )$. Moreover, $\Phi(0)<0$ and 
$\lim\limits_{p\to +\infty}\Phi(p)=+\infty$. This yields the equation \eqref{eq:primitive} admits a unique positive solution, which is the true physical pressure $p({\bm U})$. 
In addition, the equation \eqref{eq:primitive} implies that 
the true physical pressure $p({\bm U})$ satisfies 
\begin{align}\label{key112}
	\frac{p}{\gamma-1}-E+\frac{| \bm{m} |^2}{E+p} < 0. 
\end{align}
A natural idea is to directly solve equation \eqref{eq:primitive} by using the NR method. Unfortunately, the numerical experiments in \cite{chen2022physical} indicate that the convergence of such a NR method requires a good initial guess, and the approximate pressure may become negative during the NR iterations.  

In order to introduce our monotonically convergent NR method, we 
define 
\begin{align}
	&h_1(p):=\left(| \bm{m} |^2+(E+p)\left(\frac{p}{\gamma-1}-E\right)\right)^2,
	\\
	&h_2(p):=D^2\left((E+p)^2-| \bm{m} |^2\right).
\end{align}
After a simple transformation of (\ref{eq:primitive}), we obtain a quartic equation of $p$:
\begin{equation}\label{eq:primitive_poly}
	\phi(p) :=(\gamma-1)^2\left[h_1(p)-h_2(p)\right]=c_0+c_1p+c_2p^2+c_3p^3+p^4=0
\end{equation}
with
\begin{eqnarray}\label{eq:expression of a}
	\left\{
	\begin{aligned}
		c_0&=(| \bm{m} |^2-E^2)(| \bm{m} |^2-E^2+D^2)(\gamma-1)^2, \\
		c_1&=2E(2-\gamma)(| \bm{m} |^2-E^2)(\gamma-1)-2ED^2(\gamma-1)^2,\\
		c_2&=E^2(\gamma^2-6\gamma+6)+2| \bm{m} |^2(\gamma-1)-D^2(\gamma-1)^2,\\
		c_3&=2E(2-\gamma).
	\end{aligned}
	\right.
\end{eqnarray}
The graphs of $h_1(p)$, $h_2(p)$, and $\phi(p)$ are shown in Figure \ref{fig:phikf}.
\begin{figure}[!htb]
	\centering
	\includegraphics[width=0.618\textwidth]{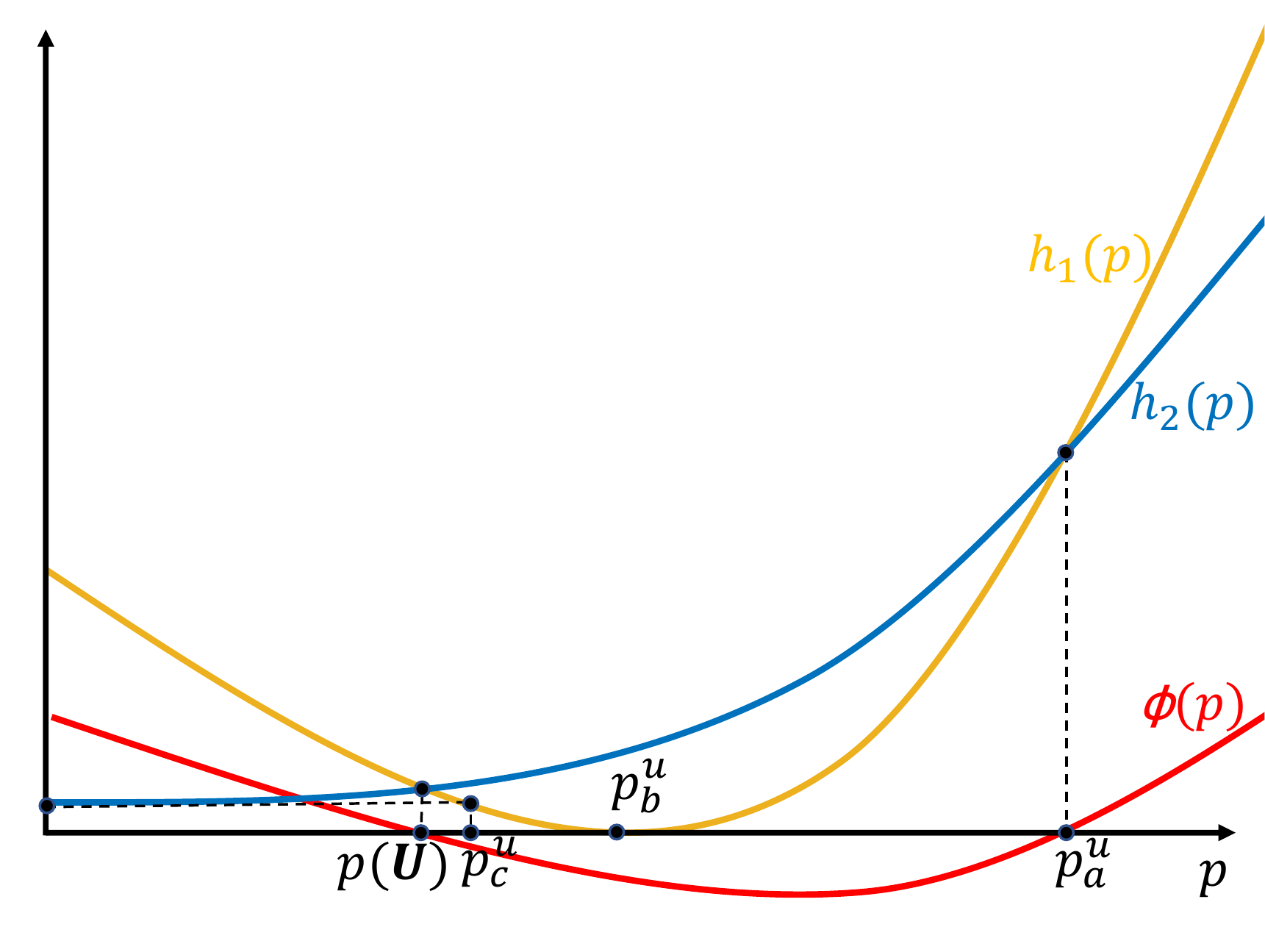}
	\caption{The graphs of $h_1(p)$, $h_2(p)$, and $\phi(p)$.}\label{fig:phikf}
\end{figure}

The transformation from \eqref{eq:primitive} to \eqref{eq:primitive_poly} produce some additional (nonphysical) roots, which fail to meet the constraints $p>0$ and \eqref{key112}. In the following, we will prove that the minimum positive root of equation \eqref{eq:primitive_poly} corresponds to the physical pressure $p({\bm U})$. Furthermore, we will propose a practical NR method that are highly efficient and can be proven to monotonically converge to the physical pressure $p({\bm U})$.

\begin{lemma} \label{lemma:signofa}
	Given ${\bm U}\in {\mathcal G}$, we have	$c_0>0$, $c_1<0$, and $c_3\geq0$. Furthermore, $c_3=0$ if and only if $\gamma=2$.
\end{lemma}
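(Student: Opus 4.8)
The plan is to prove the three sign assertions by exploiting the defining inequalities of the admissible set $\mathcal{G}$, namely $D>0$ and $E>\sqrt{D^2+|\bm{m}|^2}$, which immediately give $E^2 - |\bm{m}|^2 > D^2 > 0$. I would introduce the shorthand $A := E^2 - |\bm{m}|^2 > D^2$ to streamline the algebra. The sign of $c_3 = 2E(2-\gamma)$ is the easiest: since $\bm{U}\in\mathcal{G}$ forces $E>0$ and $\gamma\in(1,2]$, we have $2-\gamma\geq 0$, so $c_3\geq 0$, with equality precisely when $\gamma=2$. This also disposes of the final ``if and only if'' claim.

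For $c_0 = (|\bm{m}|^2 - E^2)(|\bm{m}|^2 - E^2 + D^2)(\gamma-1)^2$, I would rewrite it as $c_0 = (\gamma-1)^2\,(-A)(D^2 - A) = (\gamma-1)^2\, A\,(A - D^2)$. Since $(\gamma-1)^2>0$ (as $\gamma>1$), and both $A>0$ and $A-D^2>0$ follow from $A>D^2>0$, the product is strictly positive, giving $c_0>0$.

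The sign of $c_1 = 2E(2-\gamma)(|\bm{m}|^2 - E^2)(\gamma-1) - 2ED^2(\gamma-1)^2$ is the main obstacle, since it is a difference of two terms rather than a product, so I must show the negative contribution dominates. I would factor out the positive quantity $2E(\gamma-1)$ to write
\begin{equation*}
	c_1 = 2E(\gamma-1)\Big[(2-\gamma)(|\bm{m}|^2 - E^2) - D^2(\gamma-1)\Big]
	    = -2E(\gamma-1)\Big[(2-\gamma)A + D^2(\gamma-1)\Big].
\end{equation*}
The prefactor $2E(\gamma-1)$ is strictly positive. Inside the bracket, $(2-\gamma)\geq 0$ and $A>0$ give a nonnegative first term, while $D^2>0$ and $(\gamma-1)>0$ make the second term strictly positive; hence the bracket is strictly positive, and the leading minus sign yields $c_1<0$. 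The key observation making this work is that the two ``competing'' signs in the original expression both collapse to the same sign after factoring, so no delicate quantitative comparison between $A$ and $D^2$ is actually needed---the structure of $\mathcal{G}$ does all the work.

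I would close by remarking that every inequality used ($E>0$, $\gamma>1$, $\gamma\leq 2$, and $E^2-|\bm{m}|^2>D^2>0$) is a direct consequence of the admissibility $\bm{U}\in\mathcal{G}$ together with $\gamma\in(1,2]$, so all three sign conclusions and the equality characterization for $c_3$ follow simultaneously, completing the proof.
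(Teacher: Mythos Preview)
Your proof is correct and follows essentially the same approach as the paper: both arguments directly verify the signs of $c_0,c_1,c_3$ from the inequalities $E^2-|\bm m|^2>D^2>0$, $E>0$, and $\gamma\in(1,2]$ implied by $\bm U\in\mathcal G$. The only cosmetic difference is that for $c_1$ the paper simply bounds the first summand above by zero to conclude $c_1<-2ED^2(\gamma-1)^2<0$, whereas you factor out $2E(\gamma-1)$ first; the content is identical.
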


\begin{proof}
	Since $D>0$, $E>\sqrt{D^2 + |{\bm m}|^2}$, and $\gamma\in(1,2]$, we have 
	\begin{enumerate}[(i)]
		\item $c_0=(E^2-|\bm m|^2)(E^2-D^2-|\bm m|^2)(\gamma-1)^2>0$.
		
		\item $c_1=2E(2-\gamma)(|\bm m|^2-E^2)(\gamma-1)-2ED^2(\gamma-1)^2<-2ED^2(\gamma-1)^2<0$.
		
		\item $c_3=2E(2-\gamma)\geq 0$, and the equality holds if and only if $\gamma=2$.
	\end{enumerate}
\end{proof}

\begin{lemma} \label{lemma:2posroot}
	Given ${\bm U}\in {\mathcal G}$, $\phi(p)$ has at least two different positive roots, which are located in the intervals $(0,p_b^u)$ and $(p_b^u,+\infty)$, respectively, where 
	\begin{equation}\label{eq:pbu}
		p_b^u:=\frac{E(\gamma-2)+\sqrt{E^2(2-\gamma)^2-4(\gamma-1)(|\bm{m}|^2-E^2)}}{2}.
	\end{equation}
\end{lemma}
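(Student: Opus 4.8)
The plan is to locate the two roots via the intermediate value theorem, after one key observation: the number $p_b^u$ is precisely the larger root of the quadratic sitting inside $h_1$. Expanding the inner factor of $h_1(p)$ gives
\begin{equation*}
| \bm{m} |^2+(E+p)\left(\frac{p}{\gamma-1}-E\right)=\frac{1}{\gamma-1}\left(p^2+E(2-\gamma)p+(\gamma-1)(| \bm{m} |^2-E^2)\right),
\end{equation*}
whose two roots are exactly $\frac{1}{2}\big(E(\gamma-2)\pm\sqrt{E^2(2-\gamma)^2-4(\gamma-1)(| \bm{m} |^2-E^2)}\big)$, the larger being $p_b^u$. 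Hence $h_1(p_b^u)=0$ by construction, and this is the structural fact that drives the whole argument.

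First I would check that $p_b^u$ is well defined and positive. Since $\bm U\in\mathcal G$ gives $E>\sqrt{D^2+| \bm{m} |^2}$, we have $E^2-| \bm{m} |^2>0$, so the discriminant $E^2(2-\gamma)^2-4(\gamma-1)(| \bm{m} |^2-E^2)$ is strictly positive and the two roots are real and distinct. Their product equals $(\gamma-1)(| \bm{m} |^2-E^2)<0$, so exactly one root is positive; that one is $p_b^u$, whence $p_b^u>0$.

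The crux is then to evaluate $\phi$ at $p_b^u$. Because $h_1(p_b^u)=0$, the definition \eqref{eq:primitive_poly} collapses to $\phi(p_b^u)=-(\gamma-1)^2h_2(p_b^u)$. To fix the sign I would note that $p_b^u>0$ together with $E>| \bm{m} |$ (again from $\bm U\in\mathcal G$) yields $E+p_b^u>| \bm{m} |$, so $(E+p_b^u)^2-| \bm{m} |^2>0$; combined with $D>0$ this gives $h_2(p_b^u)>0$, and therefore $\phi(p_b^u)<0$.

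Finally I would assemble the sign pattern. By Lemma \ref{lemma:signofa} we have $\phi(0)=c_0>0$, and since $\phi$ is a monic quartic, $\lim_{p\to+\infty}\phi(p)=+\infty$. With $\phi(p_b^u)<0$ sandwiched between them, the intermediate value theorem produces a root in $(0,p_b^u)$ and a second root in $(p_b^u,+\infty)$; lying in disjoint intervals, these are distinct and both positive, which is the claim. I do not expect a serious obstacle: the only real content is recognizing $p_b^u$ as the zero of $h_1$, after which everything is a two-step sign chase. The one delicate point is the strict positivity of $h_2(p_b^u)$, which is exactly where the admissibility hypotheses $D>0$ and $E>| \bm{m} |$ enter.
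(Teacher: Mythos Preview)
Your proof is correct and follows essentially the same route as the paper: identify $p_b^u$ as the positive zero of the quadratic factor inside $h_1$, deduce $\phi(p_b^u)=-(\gamma-1)^2h_2(p_b^u)<0$, and then apply the intermediate value theorem using $\phi(0)=c_0>0$ and $\phi(p)\to+\infty$. The only cosmetic differences are that the paper argues uniqueness of the positive root of $h_3$ via monotonicity (where you use the sign of the product of roots) and bounds $h_2$ from below by $h_2(0)$ rather than directly at $p_b^u$.
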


\begin{proof}
	Consider the quadratic function 
	$h_3(p)=| \bm{m} |^2+(E+p)\left(\frac{p}{\gamma-1}-E\right)$. 
	Note that $h_3(0)=| \bm{m} |^2-E^2<0$, and $h'_3(p) = \frac{2p}{\gamma-1}+\frac{2-\gamma}{\gamma-1}E > 0$ when $p>0$. Thus $h_3(p)$ is strictly increasing on $[0,+\infty)$ and has only one positive root, which is exactly $p_b^u$ defined in \eqref{eq:pbu}.  
	Note that $h_2(p)$ is monotonically increasing when $p\geq 0$, which implies  
	$$h_2(p)\geq h_2(0)=D^2(E^2-|\bm{m}|^2)>D^4>0 \quad \forall p \ge 0.$$
	Hence we have
	\begin{align*}
		\phi(p_b^u) =(\gamma-1)^2(h_1(p_b^u)-h_2(p_b^u))
		=(\gamma-1)^2(h_3^2(p_b^u)-h_2(p_b^u))
		=-(\gamma-1)^2h_2(p_b^u)<0.
	\end{align*}
	Since $\phi(0)=c_0>0$ and $\lim\limits_{p\to +\infty}\phi(p)=+\infty$, 
	according to the zero point theorem, we know that $\phi(p)$ has at least two different positive roots, which are located in the intervals $(0,p_b^u)$ and $(p_b^u,+\infty)$, respectively. The proof is completed. 
\end{proof}

For convenience, we will count the number of roots by including the multiplicity of a repeated root, unless otherwise specified.

\begin{lemma} \label{lemma:2negroot}
	If $\phi(p)$ has 4 real roots, then it has 2 negative roots.
\end{lemma}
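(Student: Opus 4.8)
The plan is to exploit Vieta's formulas for the monic quartic $\phi(p)=p^4+c_3p^3+c_2p^2+c_1p+c_0$ together with the sign information from Lemma~\ref{lemma:signofa}. Writing the four real roots, counted with multiplicity, as $r_1,r_2,r_3,r_4$, Vieta's relations give in particular
\[
r_1r_2r_3r_4=c_0, \qquad r_1+r_2+r_3+r_4=-c_3.
\]
First I would observe that, since $c_0>0$, the point $p=0$ is not a root, so every $r_i$ is nonzero and their product is strictly positive. The sign of this product is governed by the parity of the number of negative roots, so that number must be \emph{even}, i.e. $0$, $2$, or $4$.

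Next I would rule out the two extreme cases. The possibility of four negative roots is eliminated at once by Lemma~\ref{lemma:2posroot}, which supplies at least two distinct positive roots; these contribute at least $2$ to the positive count and leave at most two negative roots. To exclude zero negative roots, i.e. that all four roots are positive, I would invoke the sum relation: four strictly positive roots would force $r_1+r_2+r_3+r_4>0$, whereas Vieta gives $r_1+r_2+r_3+r_4=-c_3\le 0$ because $c_3\ge 0$ by Lemma~\ref{lemma:signofa}. This contradiction disposes of the all-positive case.

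Combining the two steps, the number of negative roots is even, is not $4$, and is not $0$, hence equals exactly $2$, which is the assertion. The argument is essentially immediate once Vieta's formulas are set up, so I do not expect a genuine obstacle. The only points needing mild care are (a) confirming that no root vanishes, which is handled by $c_0\neq 0$, so that the parity-of-sign argument is legitimate, and (b) checking that the two positive roots from Lemma~\ref{lemma:2posroot}, counted with multiplicity, really prevent four negative roots. As a self-contained alternative to using Lemma~\ref{lemma:2posroot} in step (b), I would note that four negative roots would make the sum of triple products $\sum r_ir_jr_k$ negative, forcing $-c_1<0$, i.e. $c_1>0$, contradicting $c_1<0$ from Lemma~\ref{lemma:signofa}; this gives the same conclusion directly from the coefficient signs.
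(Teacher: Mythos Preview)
Your argument is correct and is essentially the same as the paper's: both use Vieta's formulas for the sum ($=-c_3\le 0$) and product ($=c_0>0$) of the roots, together with the two positive roots supplied by Lemma~\ref{lemma:2posroot}, to force the remaining two roots to be negative. The paper presents this more directly by ordering the roots and arguing on the smallest two, while you phrase it as a parity count, but the substance is identical.
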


\begin{proof}
	Assume the 4 real roots of $\phi(p)$ are $\hat p_1\leq \hat p_2\leq \hat p_3\leq \hat p_4$. According to Lemma \ref{lemma:2posroot}, we have $\hat p_4 > \hat p_3> 0$. Then it suffices to prove $\hat p_1< \hat p_2<0$. 
	According to Vieta's formulas, $\hat p_1+\hat p_2+\hat p_3+ \hat p_4=-c_3\leq0$ and $\hat p_1 \hat p_2 \hat p_3 \hat p_4=c_0>0$, we can conclude that $\hat p_1\le \hat p_2<0$, which finishes the proof.
\end{proof}

With Lemmas \ref{lemma:2posroot} and \ref{lemma:2negroot}, we immediately obtain the following theorem.

\begin{Theorem} \label{thm:2negroot2posroot}
	The quartic polynomial	$\phi(p)$ has either 2 different positive roots and 2 negative roots, or 2 different positive roots and 2 complex roots.
\end{Theorem}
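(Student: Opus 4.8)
The plan is to leverage the fundamental theorem of algebra together with the two preceding lemmas, reducing the statement to a short case analysis on the number of real roots of $\phi$. Since $\phi$ is a quartic polynomial with real coefficients (see \eqref{eq:primitive_poly}), it has exactly four roots in $\mathbb{C}$ counted with multiplicity, and its non-real roots occur in complex-conjugate pairs. Consequently the number of real roots, counted with multiplicity, is even, namely $0$, $2$, or $4$.

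First I would invoke Lemma \ref{lemma:2posroot}, which guarantees at least two distinct positive real roots; this already forces the number of real roots to be at least two, so the only surviving possibilities are exactly two or exactly four real roots. These two cases will correspond precisely to the two alternatives asserted in the statement.

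In the four-real-root case, Lemma \ref{lemma:2negroot} supplies two negative roots. Together with the two distinct positive roots from Lemma \ref{lemma:2posroot}, these four roots are already accounted for, so $\phi$ has exactly two distinct positive roots and two negative roots. In the two-real-root case, the remaining two roots must be non-real and hence form a complex-conjugate pair; the two real roots are then exactly the two distinct positive roots provided by Lemma \ref{lemma:2posroot}, giving two distinct positive roots and two complex roots. Combining the two cases yields the claimed dichotomy.

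I do not anticipate a serious obstacle here: once the two lemmas are in hand, the argument is essentially parity bookkeeping. The only points needing mild care are to confirm that the four-real-root case cannot yield three or four positive roots — ruled out because Lemma \ref{lemma:2negroot} pins down exactly two negative roots, while Lemma \ref{lemma:2posroot} forces at least two positive ones — and, symmetrically, that the two real roots in the two-root case cannot be non-positive, which holds because the two positive roots of Lemma \ref{lemma:2posroot} already exhaust the real roots. With these consistency checks, the case analysis is airtight and the theorem follows immediately.
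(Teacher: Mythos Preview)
Your proposal is correct and follows exactly the approach the paper takes: the paper simply states that the theorem is an immediate consequence of Lemmas \ref{lemma:2posroot} and \ref{lemma:2negroot}, and your parity-based case analysis on the number of real roots is precisely the implicit reasoning behind that claim.
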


\begin{Theorem} \label{thm:smaller}
	The smallest positive root of $\phi(p)$ is the unique positive root of equation (\ref{eq:primitive}), which is the physical pressure $p({\bm U})$.
\end{Theorem}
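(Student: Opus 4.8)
The plan is to show that the squaring transformation carrying \eqref{eq:primitive} into \eqref{eq:primitive_poly} introduces exactly one spurious positive root, which necessarily lies above $p_b^u$, while the genuine physical root stays strictly below $p_b^u$; the physical root is then automatically the smaller of the two positive roots. First I would record the algebraic identity behind \eqref{eq:primitive_poly}. For any $p>0$ we have $E+p>0$, and by the bound $h_2(p)\ge h_2(0)>0$ established in the proof of Lemma \ref{lemma:2posroot}, also $(E+p)^2>|\bm m|^2$, so the square root in \eqref{eq:primitive} is real and $\Phi(p)$ is well defined. Isolating the radical in \eqref{eq:primitive} and multiplying by $(E+p)$ gives
$$
	h_3(p) = -D\sqrt{(E+p)^2-|\bm m|^2},
$$
where $h_3(p)=|\bm m|^2+(E+p)\big(\tfrac{p}{\gamma-1}-E\big)$ is the quantity introduced in the proof of Lemma \ref{lemma:2posroot}. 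Squaring yields $h_3^2(p)=h_2(p)$, i.e. $h_1(p)=h_2(p)$, which is exactly $\phi(p)=0$. Hence every positive root of \eqref{eq:primitive} is a root of $\phi$, and the converse can fail only through the opposite sign choice $h_3(p)=+D\sqrt{(E+p)^2-|\bm m|^2}$.

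Next I would use the sign of $h_3$ to discriminate between the two sign choices. From the proof of Lemma \ref{lemma:2posroot}, $h_3$ is strictly increasing on $[0,+\infty)$ with its unique positive zero at $p_b^u$, so $h_3<0$ on $(0,p_b^u)$ and $h_3>0$ on $(p_b^u,+\infty)$. The physical pressure satisfies \eqref{key112}, which upon multiplication by $(E+p)>0$ is precisely $h_3(p(\bm U))<0$; therefore $p(\bm U)\in(0,p_b^u)$ strictly. On $(0,p_b^u)$, any root of $\phi$ has $h_3<0$ together with $h_2>0$, so the relation $h_3^2=h_2$ forces $h_3=-D\sqrt{(E+p)^2-|\bm m|^2}$ with the correct negative sign; dividing by $(E+p)$ recovers $\Phi(p)=0$. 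Thus on $(0,p_b^u)$ the equations $\phi(p)=0$ and $\Phi(p)=0$ are equivalent, and by the uniqueness of the positive root of \eqref{eq:primitive} there is exactly one root of $\phi$ in $(0,p_b^u)$, namely $p(\bm U)$.

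Finally I would assemble the conclusion from the counting results. By Lemma \ref{lemma:2posroot} and Theorem \ref{thm:2negroot2posroot}, $\phi$ has precisely two positive roots, one in $(0,p_b^u)$ and one in $(p_b^u,+\infty)$; the former is $p(\bm U)$, while the latter exceeds $p_b^u>p(\bm U)$. Hence $p(\bm U)$ is the smallest positive root of $\phi$. As a consistency check, the larger positive root is genuinely spurious: there $h_3>0$, so $\Phi(p)=2D\sqrt{(E+p)^2-|\bm m|^2}/(E+p)>0\neq0$.

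The only delicate point is the bookkeeping of the squaring step: one must verify that it is the sign of $h_3$ (equivalently, the inequality \eqref{key112}) that decides whether a root of $\phi$ actually solves \eqref{eq:primitive}, and that $(E+p)^2>|\bm m|^2$ holds throughout so that every square root is real. Both facts are already in hand from the monotonicity of $h_3$ and the uniform lower bound on $h_2$ proved for Lemma \ref{lemma:2posroot}, so the argument should go through cleanly.
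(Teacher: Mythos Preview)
Your proposal is correct and follows essentially the same approach as the paper: both arguments hinge on the sign of $h_3$ relative to $p_b^u$ and the decomposition $\Phi(p)=\frac{h_3(p)}{E+p}+D\sqrt{1-|\bm m|^2/(E+p)^2}$ to rule out the larger positive root of $\phi$. The paper simply computes $\Phi(p_a^u)>0$ directly from $h_3(p_a^u)>h_3(p_b^u)=0$, whereas you do a bit more bookkeeping on the squaring step to show that on $(0,p_b^u)$ the equations $\phi=0$ and $\Phi=0$ coincide; the substance is the same.
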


\begin{proof}
	Denote the larger positive root of $\phi(p)$ as $p_a^u$. The proof of Lemma \ref{lemma:2posroot} implies that $p_a^u>p_b^u>0$. 
	It suffices to prove that $\Phi(p_a^u)\neq0$. Recall that $h_3(p)$ is strictly increasing in the interval $[0,+\infty)$. Therefore, 
	\begin{align*}
		\Phi(p_a^u) &=\frac{p_a^u}{\gamma-1}-E+\frac{| \bm{m} |^2}{E+p_a^u}+D\sqrt{1-\frac{| \bm{m} |^2}{(E+p_a^u)^2}}
		\\
		& =\frac{h_3(p_a^u)}{E+p_a^u}+D\sqrt{1-\frac{| \bm{m} |^2}{(E+p_a^u)^2}}
		\\
		&>\frac{h_3(p_b^u)}{E+p_a^u}+D\sqrt{1-\frac{| \bm{m} |^2}{(E+p_a^u)^2}}=D\sqrt{1-\frac{| \bm{m} |^2}{(E+p_a^u)^2}}>0,
	\end{align*}
	which finishes the proof.
\end{proof}

The relative positions of $p({\bm U})$, $p_a^u$, and $p_b^u$ are illustrated in Figure \ref{fig:phikf}.


Before giving our NR-I method, we present several lemmas, which are useful for establishing the convergence and PCP property of the NR-I method.

\begin{lemma} \label{lemma:newtonmono1}
	Let $\{p_n\}_{n\geq0}$ denote the iteration sequence obtained using the NR method to solve an  equation $f(p)=0$. We assume that $p_*$ is a root of $f(p)=0$. If $p_0<p_*$, $f\in C^2[p_0,p_*)$, and one of the following two conditions holds for all $p\in [p_0,p_*)$: 
	\begin{enumerate}[(i)]
		\setlength{\itemindent}{1em}  
		\item\label{cond:NR mono1} $f'(p)<0,f''(p)\geq 0$,
		
		\item\label{cond:NR mono2} $f'(p)>0,f''(p)\leq 0$,
	\end{enumerate}
	then the NR iteration sequence $\{p_n\}_{n\geq0}$ is  monotonically increasing and  converges to $p_*$.
\end{lemma}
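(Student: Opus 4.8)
The plan is to prove by induction on $n$ that the entire sequence remains in $[p_0,p_*]$, that it is monotonically increasing, and then to identify its limit. Recall that the NR iteration reads $p_{n+1}=p_n-f(p_n)/f'(p_n)$, which is well defined on $[p_0,p_*)$ because under either hypothesis (\ref{cond:NR mono1}) or (\ref{cond:NR mono2}) the derivative $f'$ has a constant strict sign and is thus nowhere zero. The induction hypothesis will be $p_0\le p_n<p_*$; should an iterate ever equal $p_*$, then $f(p_n)=0$ and the iteration has already reached the root and stays there.

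First I would pin down the sign of $f(p_n)$. Since $f'$ has a constant strict sign on $[p_0,p_*)$, $f$ is strictly monotone there, and combined with $f(p_*)=0$ this forces $f(p)>0$ for all $p\in[p_0,p_*)$ under (\ref{cond:NR mono1}) and $f(p)<0$ for all $p\in[p_0,p_*)$ under (\ref{cond:NR mono2}). In either case the Newton correction $-f(p_n)/f'(p_n)$ is a ratio of two quantities of opposite sign, hence strictly positive, giving $p_{n+1}>p_n$. This establishes strict monotonic increase as long as the iterate has not yet reached $p_*$.

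The crucial step is the upper bound $p_{n+1}\le p_*$, and this is where the second-order hypotheses enter. I would use the second-order Taylor expansion with Lagrange remainder about $p_n$ evaluated at $p_*$: since $f(p_*)=0$, there is $\xi_n\in(p_n,p_*)$ with
\begin{equation*}
0=f(p_*)=f(p_n)+f'(p_n)(p_*-p_n)+\tfrac12 f''(\xi_n)(p_*-p_n)^2 .
\end{equation*}
Dividing by $f'(p_n)$ and rearranging yields the exact error recursion
\begin{equation*}
p_*-p_{n+1}=-\frac{f''(\xi_n)}{2f'(p_n)}\,(p_*-p_n)^2 .
\end{equation*}
Under either (\ref{cond:NR mono1}) (where $f''\ge0$, $f'<0$) or (\ref{cond:NR mono2}) (where $f''\le0$, $f'>0$) the prefactor $-f''(\xi_n)/(2f'(p_n))$ is nonnegative, so $p_*-p_{n+1}\ge0$, i.e.\ $p_{n+1}\le p_*$. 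Equivalently, one may argue geometrically that convexity (resp.\ concavity) forces the tangent line at $p_n$ to cross the $p$-axis on the correct side of $p_*$. This closes the induction. I would note that the half-open domain $[p_0,p_*)$ causes no difficulty: the remainder point $\xi_n$ lies strictly inside, and only the value $f(p_*)=0$ together with left-continuity at $p_*$ is used.

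Finally, having shown that the sequence is monotonically increasing and bounded above by $p_*$, it converges to some limit $L\le p_*$. If $L<p_*$, then $f$ is $C^2$ near $L$ with $f'(L)\neq0$, so passing to the limit in the iteration gives $f(L)/f'(L)=0$, hence $f(L)=0$; but $f$ is strictly one-signed on $[p_0,p_*)$, a contradiction. Therefore $L=p_*$, which proves convergence to the root. I expect the only genuine subtlety to be the boundary behaviour at the half-open endpoint $p_*$ in the Taylor step; the remainder of the argument is the sign bookkeeping that the two hypotheses are precisely engineered to make uniform across both cases.
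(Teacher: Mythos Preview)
Your proof is correct and follows the same overall architecture as the paper's: sign analysis for strict monotone increase, an upper bound $p_{n+1}\le p_*$, then monotone convergence together with passing to the limit in the iteration to identify the limit as $p_*$. The one substantive difference is in how the upper bound is obtained. You use the second-order Taylor expansion with Lagrange remainder to derive the exact error recursion
\[
p_*-p_{n+1}=-\frac{f''(\xi_n)}{2f'(p_n)}(p_*-p_n)^2\ge 0,
\]
whereas the paper instead differentiates the Newton-error function $f_1(p):=p_*-\bigl(p-f(p)/f'(p)\bigr)$, computes $f_1'(p)=-f(p)f''(p)/f'(p)^2\le 0$ on $[p_0,p_*)$, and uses $f_1(p_*)=0$ together with monotonicity of $f_1$ to conclude $f_1(p_n)>0$. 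Your Taylor argument is the more classical textbook route and has the bonus of directly exhibiting the quadratic error recursion; the paper's approach avoids Taylor's theorem but tacitly needs $f'(p_*)$ to be defined and nonzero for $f_1(p_*)$ to make sense. Both rely on essentially the same regularity at the half-open endpoint (continuity of $f$ at $p_*$), so the subtlety you flag is shared.
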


\begin{proof}
	We only show the proof under the condition (\ref{cond:NR mono1}), while the proof under condition (\ref{cond:NR mono2}) is similar and thus omitted. 
	
	Firstly, we prove that $\{p_n\}_{n\geq0}$ is monotonically increasing and that $p_*$ is an upper bound of $\{p_n\}_{n\geq0}$. It suffices to prove that $p_*-p_{n+1}=p_*-(p_n-\frac{f(p_n)}{f'(p_n)})>0$ and $p_{n+1}-p_n=-\frac{f(p_n)}{f'(p_n)}>0$ if $p_0\leq p_n<p_*$. Under the condition (\ref{cond:NR mono1}), we have $f(p_n)>f(p_*)=0$ and $f'(p_n)<0$, so that $p_{n+1}-p_n=-\frac{f(p_n)}{f'(p_n)}>0$. Define $f_1(p):=p_*-(p-\frac{f(p)}{f'(p)})$, then 
	$$f_1'(p)=-1+\frac{{f'(p)}^2-f(p)f''(p)}{{f'(p)}^2}=-\frac{f(p)f''(p)}{f'(p)^2}\leq 0 \qquad \forall p \in [p_0,p_*).$$ 
	Since $f_1(p_*)=0$ and $p_n<p_*$, we have  $$0=f_1(p_*)<f_1(p_n)=p_*-\left(p_n-\frac{f(p_n)}{f'(p_n)}\right)=p_*-p_{n+1}.$$
	
	Secondly, we prove the convergence. Since $\{p_n\}_{n\geq0}$ is monotonically increasing and has an upper bound $p_*$, by the monotone convergence theorem, we know that the sequence  $\{p_n\}_{n\geq0}$ has a limit $p_{**}\leq p_{*}$. Therefore, 
	$$0=\lim_{n\to +\infty}(p_{n+1}-p_n)=-\lim_{n\to +\infty}\frac{f(p_n)}{f'(p_n)}=-\frac{f(p_{**})}{f'(p_{**})},$$ 
	which implies $f(p_{**})=0$. Since $p_0\leq p_{**}\leq p_{*}$ and $f(p)>0$ when $p\in [p_0,p_*)$, we obtain $p_{**}=p_{*}$. Hence $\lim\limits_{n\to +\infty}p_n=p_{**}=p_{*}$. The proof is completed.
	%
\end{proof}

Similarly, we have the following lemma. 

\begin{lemma} \label{lemma:newtonmono2}
	Let $\{p_n\}_{n\geq0}$ denote the iteration sequence obtained using the NR method to solve an  equation $f(p)=0$. We assume that $p_*$ is a root of $f(p)=0$. If $p_0> p_*$, $f\in C^2(p_*,p_0]$, and one of the following two conditions holds for all $p\in (p_*,p_0]$:
	\begin{enumerate}[(i)]
		\setlength{\itemindent}{1em}  
		\item $f'(p)<0,f''(p)\leq 0$,
		
		\item $f'(p)>0,f''(p)\geq 0$,
	\end{enumerate}
	the NR iteration sequence $\{p_n\}_{n\geq0}$ is  monotonically decreasing and  converges to $p_*$.
\end{lemma}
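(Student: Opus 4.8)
The plan is to mirror the argument of Lemma~\ref{lemma:newtonmono1}, establishing the claim under condition~(i), namely $f'(p)<0$ and $f''(p)\le 0$ on $(p_*,p_0]$; the proof under condition~(ii) follows by an entirely analogous sign analysis and would be omitted. The overall strategy is to show that the sequence $\{p_n\}_{n\ge 0}$ is monotonically decreasing and bounded below by $p_*$, so that the monotone convergence theorem guarantees a limit, which I would then identify with $p_*$ by passing to the limit in the NR recursion.

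First I would prove monotonicity. Since $f$ is strictly decreasing on $(p_*,p_0]$ and $p_*<p_n\le p_0$, we have $f(p_n)<f(p_*)=0$; combined with $f'(p_n)<0$ this gives
\[
p_{n+1}-p_n=-\frac{f(p_n)}{f'(p_n)}<0,
\]
so the iterates strictly decrease as long as they remain in $(p_*,p_0]$. To confirm they stay above $p_*$, I would introduce the auxiliary function $g_1(p):=\big(p-\tfrac{f(p)}{f'(p)}\big)-p_*$, whose derivative is
\[
g_1'(p)=\frac{f(p)f''(p)}{f'(p)^2}\ge 0 \qquad \forall\, p\in(p_*,p_0],
\]
because $f(p)<0$ and $f''(p)\le 0$ there. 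Since $g_1(p_*)=0$ and $g_1$ is nondecreasing, $p_n>p_*$ yields $g_1(p_n)\ge 0$, i.e.\ $p_{n+1}\ge p_*$. An induction starting from $p_0>p_*$ then keeps the whole sequence inside $[p_*,p_0]$ and decreasing.

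Having a decreasing sequence bounded below by $p_*$, the monotone convergence theorem provides a limit $p_{**}\in[p_*,p_0]$. Passing to the limit in $p_{n+1}=p_n-f(p_n)/f'(p_n)$ gives $f(p_{**})=0$ whenever $f'(p_{**})\ne 0$; since $f(p)<0$ strictly on $(p_*,p_0]$, the only zero of $f$ in $[p_*,p_0]$ is $p_*$, forcing $p_{**}=p_*$. The main point requiring care is the sign bookkeeping: because both the monotonicity direction and the sign of $f''$ are reversed relative to Lemma~\ref{lemma:newtonmono1}, one must verify that $g_1'\ge 0$ (rather than $\le 0$) and that $f(p_n)<0$ (rather than $>0$), so that $f(p_n)/f'(p_n)$ carries the sign needed for a decreasing sequence. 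A minor technical subtlety is the open endpoint at $p_*$: one invokes $f(p_*)=0$ and the continuity of $f,f'$ there only in the final limit step, which is legitimate since $p_*$ is assumed to be a root and the limit is approached from within $(p_*,p_0]$.
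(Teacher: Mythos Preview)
Your proposal is correct and follows exactly the approach the paper intends: the paper does not give an explicit proof of this lemma but simply writes ``Similarly, we have the following lemma,'' deferring to the argument of Lemma~\ref{lemma:newtonmono1} with the obvious sign changes. Your write-up carries out precisely that symmetric argument---the auxiliary function $g_1$, its monotonicity via $g_1'=f f''/(f')^2$, and the monotone convergence step---so there is nothing to add.
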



\begin{Theorem} \label{lemma:poly1}
	If $\phi''(0)=2c_2>0$, then $\phi''(p)>0$ and $\phi'(p)<0$ for all $p \in [0, p({\bm U}))$.
\end{Theorem}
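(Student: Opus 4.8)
The plan is to split the statement into its two assertions and let the convexity claim do the heavy lifting. First I would simply compute $\phi''(p)=12p^2+6c_3p+2c_2$. By hypothesis $2c_2=\phi''(0)>0$, and Lemma~\ref{lemma:signofa} supplies $c_3\ge 0$, so on $[0,+\infty)$ every term of $\phi''$ is nonnegative and the constant term is strictly positive; hence $\phi''(p)>0$ for all $p\ge 0$. This immediately gives the first conclusion (in fact on all of $[0,+\infty)$, not merely on $[0,p({\bm U}))$), and, crucially, it tells me that $\phi'$ is strictly increasing and $\phi$ is strictly convex on $[0,+\infty)$.

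For the second conclusion I would start from the boundary value $\phi'(0)=c_1<0$, again read off from Lemma~\ref{lemma:signofa}. Since $\phi'$ is strictly increasing and starts negative, it vanishes at most once on $[0,+\infty)$, so the whole question reduces to showing that this unique potential zero of $\phi'$ (the global minimizer of $\phi$) does not occur strictly before $p({\bm U})$. I would argue by contradiction: assume $\phi'(q)\ge 0$ for some $q\in[0,p({\bm U}))$. Then the intermediate value theorem yields $q_0\in(0,q]\subset(0,p({\bm U}))$ with $\phi'(q_0)=0$, and $\phi$ is strictly decreasing on $[0,q_0]$ and strictly increasing on $[q_0,+\infty)$.

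The crux is then to manufacture a positive root smaller than $p({\bm U})$, contradicting Theorem~\ref{thm:smaller}. Because $\phi$ is strictly increasing on $[q_0,+\infty)$ with $q_0<p({\bm U})$ and $\phi(p({\bm U}))=0$, I obtain $\phi(q_0)<0$. On the other hand $\phi(0)=c_0>0$ by Lemma~\ref{lemma:signofa}, so $\phi$ changes sign on $(0,q_0)$ and the intermediate value theorem produces a root $r\in(0,q_0)\subset(0,p({\bm U}))$. This positive root is strictly smaller than $p({\bm U})$, contradicting that $p({\bm U})$ is the smallest positive root of $\phi$. Hence no such $q$ exists, and $\phi'(p)<0$ for all $p\in[0,p({\bm U}))$.

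The main obstacle — really the only nonroutine step — is this last one: recognizing that the sign pattern $\phi(0)>0$, $\phi'(0)<0$, combined with strict convexity, forces any premature sign change of $\phi'$ to create an extra small root, which the minimality of $p({\bm U})$ forbids. Everything else is a direct reading-off of the coefficient signs from Lemma~\ref{lemma:signofa}, so I expect the write-up to be short once the convexity observation is in place.
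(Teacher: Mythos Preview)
Your proof is correct. The first half is essentially the paper's argument (both compute $\phi''(p)=12p^2+6c_3p+2c_2$ and read off positivity from $c_2>0$, $c_3\ge 0$; the paper routes this through $\phi'''(p)>0$, you observe the sign of each term directly). For the second half, however, you take a genuinely different route. The paper brings in the larger positive root $p_a^u$: from $\phi(p(\bm U))=\phi(p_a^u)=0$ Rolle's theorem yields a second critical point $p_c\in[p(\bm U),p_a^u]$ of $\phi$, and together with your $q_0$ this gives two distinct zeros of $\phi'$ on $[0,\infty)$; one more application of Rolle's theorem then produces a zero of $\phi''$, contradicting $\phi''>0$. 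You instead use strict convexity to force $\phi(q_0)<\phi(p(\bm U))=0$ and pair this with $\phi(0)=c_0>0$ to manufacture a positive root strictly below $p(\bm U)$, contradicting the minimality in Theorem~\ref{thm:smaller}. Your argument is a touch more self-contained in that it never needs the second root $p_a^u$, while the paper's double-Rolle argument avoids evaluating $\phi$ anywhere and contradicts the convexity conclusion directly.
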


\begin{proof}
	Note that $\phi''(p)=12p^2+6c_3p+2c_2$ and $\phi'''(p)=24p+6c_3$. 
	Because $c_3\ge 0$, we have $\phi'''(p)>0$ when $p>0$, which yields 
	$\phi''(p)$ is strictly increasing in the interval $[0,+\infty)$. Since $\phi''(0)=2c_2>0$, 
	we have $\phi''(p)\ge \phi''(0) >0$ for all  $p \in  [0,p({\bm U}))\subset[0,+\infty)$.
	
	We then show $\phi'(p)<0$ 
	using proof by contradiction. Suppose that there exists $p_a\in [0,p({\bm U}))$ such that $\phi'(p_a)\geq 0$. 
	Because $\phi'(0)=c_1<0$, there must exist $p_b\in (0,p_a]$ such that $\phi'(p_b)= 0$ by the  intermediate value theorem. 
	Since $p_a^u > p({\bm U})$ and $\phi(p({\bm U}))=\phi(p_a^u)=0$, by Rolle's theorem, there exists $p_c\in [p({\bm U}),p_a^u]$, such that $\phi'(p_c)=0$. Therefore, $p_b\leq p_a<p({\bm U})\leq p_c$. Since $\phi'(p_b)=\phi'(p_c)=0$,  there exists $p_d\in[p_b,p_c]\subset[0,+\infty)$, such that $\phi''(p_d)=0$, which is contradictory to $\phi''(p)>0$ for all $p \in [0,+\infty)$. 
	Thus, the assumption is incorrect, and we have $\phi'(p)<0$ for all $p \in [0, p({\bm U}))$. The proof is completed. 
\end{proof}

\begin{Theorem} \label{lemma:poly2}
	If $\phi''(0)=2c_2\leq 0$, then the largest root of 
	the quadratic polynomial $\phi''(p)$, denoted by $p_e$, satisfies $p_e=\frac{-3c_3+\sqrt{9c_3^2-24c_2}}{12} \ge 0$. 
	Furthermore, we have 
	\begin{enumerate}[(i)]
		\setlength{\itemindent}{1em}  
		\item If $0\leq p_e<p({\bm U})$, then $\phi''(p)\geq 0$ and $\phi'(p)<0$ for all $p\in [p_e, p({\bm U}))$.
		\item If $p_e>p({\bm U})$, then $\phi''(p)\leq 0$ and $\phi'(p)<0$ for all $p\in (p({\bm U}),p_e]$.
	\end{enumerate}
\end{Theorem}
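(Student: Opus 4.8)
The plan is to first pin down the root structure of the quadratic $\phi''(p)=12p^2+6c_3p+2c_2$ and then translate it into sign information on $\phi'$. Since we are in the case $c_2\le 0$ and, by Lemma \ref{lemma:signofa}, $c_3\ge 0$, the discriminant $36c_3^2-96c_2=4(9c_3^2-24c_2)\ge 0$ is nonnegative, so $\phi''$ has two real roots, whose product $c_2/6\le 0$ and whose sum $-c_3/2\le 0$. Hence one root is $\le 0$ and the other is the nonnegative number $p_e=\frac{-3c_3+\sqrt{9c_3^2-24c_2}}{12}$; the bound $p_e\ge 0$ also follows directly from $-24c_2\ge 0$, which gives $\sqrt{9c_3^2-24c_2}\ge 3c_3$. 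Because $\phi''$ opens upward, this immediately yields the sign pattern $\phi''(p)\le 0$ on $[0,p_e]$ and $\phi''(p)\ge 0$ on $[p_e,+\infty)$, with $\phi''(p)>0$ strictly for $p>p_e$.

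Next I would record the basic monotonicity of $\phi'$. Since $\phi''\le 0$ on $[0,p_e]$, the cubic $\phi'$ is nonincreasing there, so $\phi'(p)\le \phi'(0)=c_1<0$ for all $p\in[0,p_e]$, the sign of $c_1$ again coming from Lemma \ref{lemma:signofa}. This observation alone settles case (ii): if $p_e>p({\bm U})$, then $(p({\bm U}),p_e]\subset[0,p_e]$, on which $\phi''\le 0$ (the first claim) and $\phi'\le c_1<0$ (the second claim) hold simultaneously.

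For case (i), where $0\le p_e<p({\bm U})$, the sign $\phi''\ge 0$ on $[p_e,p({\bm U}))$ is immediate from the sign pattern above. The nontrivial part, and the step I expect to be the main obstacle, is proving $\phi'(p)<0$ throughout $[p_e,p({\bm U}))$: here $\phi'$ is nondecreasing and starts from the negative value $\phi'(p_e)$, so monotonicity alone is insufficient. I would argue by contradiction in the same spirit as the proof of Theorem \ref{lemma:poly1}. Suppose $\phi'(p_a)\ge 0$ for some $p_a\in[p_e,p({\bm U}))$; since $\phi'(p_e)\le c_1<0\le \phi'(p_a)$, the intermediate value theorem produces a root $p_b\in(p_e,p_a]$ of $\phi'$. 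On the other hand, $\phi$ vanishes at both $p({\bm U})$ and the larger positive root $p_a^u>p({\bm U})$ from Theorem \ref{thm:smaller}, so Rolle's theorem produces a second zero $p_c\in(p({\bm U}),p_a^u)$ of $\phi'$. Then $p_e<p_b\le p_a<p({\bm U})<p_c$, and applying Rolle's theorem to $\phi'$ on $[p_b,p_c]$ yields $p_d\in(p_b,p_c)\subset(p_e,+\infty)$ with $\phi''(p_d)=0$, contradicting $\phi''(p)>0$ for $p>p_e$. This contradiction forces $\phi'(p)<0$ on $[p_e,p({\bm U}))$ and completes the proof.
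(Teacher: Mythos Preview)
Your argument is correct. The treatment of $p_e\ge 0$ and case (ii) matches the paper's proof essentially verbatim: both use $c_2\le 0$, $c_3\ge 0$ to locate $p_e$, and both derive $\phi'(p)\le \phi'(0)=c_1<0$ on $[0,p_e]$ from the concavity $\phi''\le 0$ there.

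For case (i), however, you take a different route from the paper. You reuse the Rolle-type argument from Theorem \ref{lemma:poly1}: a hypothetical $\phi'(p_a)\ge 0$ yields one zero $p_b$ of $\phi'$ in $(p_e,p_a]$ via the intermediate value theorem, a second zero $p_c$ of $\phi'$ in $(p({\bm U}),p_a^u)$ via Rolle's theorem applied to $\phi$, and then a zero of $\phi''$ strictly to the right of $p_e$, contradicting the strict positivity of $\phi''$ on $(p_e,+\infty)$. The paper instead argues directly with $\phi$: if $\phi'(p_a)\ge 0$, then convexity on $[p_e,p({\bm U}))$ forces $\phi'\ge 0$ on $[p_a,p({\bm U}))$, hence $\phi(p_a)\le \phi(p({\bm U}))=0$; since $\phi(0)=c_0>0$, the intermediate value theorem produces a positive root of $\phi$ in $(0,p_a]\subset(0,p({\bm U}))$, contradicting Theorem \ref{thm:smaller}, which identifies $p({\bm U})$ as the \emph{smallest} positive root of $\phi$. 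Your approach has the virtue of being a direct transplant of the Theorem \ref{lemma:poly1} argument and only needs the existence of the second positive root $p_a^u$; the paper's approach is slightly shorter and exploits the sharper information that $p({\bm U})$ is the smallest positive root. Both are valid.
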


\begin{proof}
	Recall that $\phi''(p)=12p^2+6c_3p+2c_2$, and $\phi'''(p)=24p+6c_3>0$ when $p>0$, which yields 
	$\phi''(p)$ is strictly increasing in $[0,+\infty)$. 
	Note that  
	$\lim\limits_{p\rightarrow+\infty }\phi''(p)=+\infty$. 
	If 
	$\phi''(0)=2c_2 \le  0$, 
	then by the intermediate value theorem, we know  $p_e=\frac{-3c_3+\sqrt{9c_3^2-24c_2}}{12} \ge 0$.

	We then prove the conclusions (i) and (ii) separately.
	\begin{enumerate}[(i)]
		\setlength{\itemindent}{1em}  
		\item It is obvious that $\phi''(p)\geq 0$ when $p\geq p_e$. 
		Suppose there exists $p_a\in[p_e,p({\bm U}))$ such that $\phi'(p_a)\geq 0$, then $\phi'(p)\geq 0$ for all $p \in [p_a,p({\bm U}))$ because $\phi''(p)\geq 0$ for all $p \in [p_e,p({\bm U}))$. 
		This means $\phi'(p)$ is monotonically increasing in $[p_a,p({\bm U}))$, implying 
		$\phi(p_a)\leq \phi (p({\bm U})) = 0$. 
		Since $\phi(0)>0$, according to the intermediate value theorem, there exists $p_b\in (0,p_a]$ such that $\phi(p_b)=0$. This contradicts with the fact that $p({\bm U})$ is the smallest positive root of 
		$\phi(p)$. Thus, the assumption is incorrect, and we have $\phi'(p)<0$ for all $p\in [p_e, p({\bm U}))$.
		\item Notice that $\phi''(p)\leq 0$ for all $p\in(p({\bm U}),p_e]\subset[0,p_e]$. Since $\phi'(0)=a_1<0$ and $\phi''(p)\leq 0$ when $p \in [0,p_e]$, we have $\phi'(p)\leq \phi'(0)<0$ when $p\in [0, p_e]\supset(p({\bm U}), p_e]$. 
	\end{enumerate}
	The proof is completed. 
\end{proof}

Inspired by Theorems \ref{lemma:poly1} and \ref{lemma:poly2} as well as Lemmas \ref{lemma:newtonmono1} and \ref{lemma:newtonmono2}, we design the following NR method for recovering the pressure $p({\bf U})$ from the conservative vector $\bm{U}$. 

\begin{Algorithm}[NR-I method] \label{expression:newnewton}
	The NR iteration reads
	$$p_{n+1}=p_n-\frac{\phi(p_n)}{\phi'(p_n)},$$ 
	with $p_0$ given by 
	\begin{equation}
		p_0=
		\begin{cases}
			0, \quad  &\text{if }~c_2>0, \\
			\frac{-3c_3+\sqrt{9c_3^2-24c_2}}{12}, \quad  &\text{otherwise}.
		\end{cases}
	\end{equation}
	The specific expressions for $\phi(p)$ and $c_i,i=2,3$, are given in \eqref{eq:primitive_poly} and \eqref{eq:expression of a}.
\end{Algorithm}

Note that $p_0\geq 0$ and $p({\bm U})>0$. 
We immediately obtain the following conclusions from Theorems \ref{lemma:poly1} and \ref{lemma:poly2} as well as Lemmas \ref{lemma:newtonmono1} and \ref{lemma:newtonmono2}. 

\begin{Theorem} \label{thm:mono}
	The iteration sequence $\{p_n\}_{n\ge 0}$ generated by Algorithm \ref{expression:newnewton} converges monotonically to $p({\bm U})$. Furthermore, Algorithm \ref{expression:newnewton} is PCP.
\end{Theorem}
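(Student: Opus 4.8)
The plan is to recognize that this theorem is essentially a synthesis of the structural results already in hand: Theorems \ref{lemma:poly1} and \ref{lemma:poly2} pin down the signs of $\phi'$ and $\phi''$ on the relevant one-sided neighborhood of $p({\bm U})$, while Lemmas \ref{lemma:newtonmono1} and \ref{lemma:newtonmono2} convert such sign information into monotone NR convergence. I would therefore set $f=\phi$ and $p_*=p({\bm U})$ throughout and organize the argument as a case analysis driven by the choice of $p_0$ in Algorithm \ref{expression:newnewton}. Since $\phi$ is a polynomial, the $C^2$ regularity required by the two NR lemmas is automatic, so in each case the only things to check are the matching sign pattern and the correct strict ordering of $p_0$ relative to $p({\bm U})$.

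In the first case $c_2>0$ we have $p_0=0<p({\bm U})=p_*$, and Theorem \ref{lemma:poly1} supplies $\phi'(p)<0$, $\phi''(p)>0$ on $[0,p({\bm U}))$, which is exactly condition (i) of Lemma \ref{lemma:newtonmono1}; hence $\{p_n\}$ increases monotonically to $p({\bm U})$. In the second case $c_2\le 0$ we have $p_0=p_e=\frac{-3c_3+\sqrt{9c_3^2-24c_2}}{12}\ge 0$ by Theorem \ref{lemma:poly2}, and I would split on the position of $p_e$ versus $p({\bm U})$. If $p_e<p({\bm U})$, Theorem \ref{lemma:poly2}(i) gives $\phi''(p)\ge 0$, $\phi'(p)<0$ on $[p_e,p({\bm U}))$, so Lemma \ref{lemma:newtonmono1}(i) again yields monotone increasing convergence. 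If $p_e>p({\bm U})$, Theorem \ref{lemma:poly2}(ii) gives $\phi''(p)\le 0$, $\phi'(p)<0$ on $(p({\bm U}),p_e]$ with $p_0=p_e>p_*$, matching condition (i) of Lemma \ref{lemma:newtonmono2} and giving monotone decreasing convergence.

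For the PCP claim I would argue directly from the established monotonicity and the positivity $p({\bm U})>0$. In the increasing cases the sequence is \emph{strictly} increasing with $p_0\ge 0$, so $p_n\ge p_1>p_0\ge 0$ for every $n\ge 1$; in the decreasing case every iterate stays above its limit, $p_n>p_*=p({\bm U})>0$. In either case all iterates from $p_1$ onward are positive, which is precisely the PCP property of Definition~2.1.

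I expect the main obstacle to be bookkeeping rather than new analysis: one must ensure the chosen $p_0$ lies \emph{strictly} on the correct side of $p({\bm U})$ so that the hypotheses $p_0<p_*$ or $p_0>p_*$ are genuinely met, and one must cleanly dispose of the borderline coincidence $p_e=p({\bm U})$. In that degenerate situation $p_0=p({\bm U})$ forces $\phi(p_0)=0$, so the iteration is stationary at $p({\bm U})$; this is legitimate precisely because $p({\bm U})$ is a \emph{simple} root of $\phi$ by Theorem \ref{thm:2negroot2posroot} (the two positive roots are distinct), whence $\phi'(p_0)\neq 0$ and the sequence is constant and positive. Once this case is isolated, the remaining work is the mechanical verification that the preceding lemmas apply.
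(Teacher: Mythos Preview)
Your proposal is correct and follows essentially the same approach as the paper, which simply states that the result is immediate from Theorems \ref{lemma:poly1}--\ref{lemma:poly2} and Lemmas \ref{lemma:newtonmono1}--\ref{lemma:newtonmono2}. You are in fact more careful than the paper: you explicitly handle the borderline case $p_e=p({\bm U})$ (which Theorem \ref{lemma:poly2} does not cover) via simplicity of the root, and you spell out why the PCP property holds from $n=1$ onward in each branch.
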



\begin{Theorem} \label{thm1} 
	Algorithm \ref{expression:newnewton} has a quadratic convergence. 
\end{Theorem}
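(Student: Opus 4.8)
The plan is to invoke the classical quadratic convergence result for the Newton--Raphson method, whose hypotheses here reduce to two facts: that the iterates converge to a root of $\phi$, and that this root is \emph{simple}. The first is already guaranteed by Theorem \ref{thm:mono}, which ensures $p_n \to p(\bm{U})$. The crux is therefore to verify that $p(\bm{U})$ is a simple root of $\phi$, i.e.\ $\phi'(p(\bm{U})) \neq 0$; once this is in hand, everything downstream is routine, since $\phi$ is a polynomial and hence $C^\infty$, so $\phi'$ and $\phi''$ are continuous on any neighborhood of $p(\bm{U})$.

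To establish simplicity I would appeal to Theorem \ref{thm:2negroot2posroot}: the quartic $\phi$ has exactly two \emph{distinct} positive roots, $p(\bm{U})$ and $p_a^u$, while its remaining two roots are either both negative or form a complex-conjugate pair. In either case $p(\bm{U})$ is distinct from all the other roots, so it cannot be repeated; hence $\phi'(p(\bm{U})) \neq 0$. Equivalently, the sign information $\phi'(p)<0$ just below $p(\bm{U})$ obtained in Theorems \ref{lemma:poly1} and \ref{lemma:poly2} gives $\phi'(p(\bm{U})) \le 0$ by continuity, and simplicity upgrades this to the strict inequality $\phi'(p(\bm{U})) < 0$.

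With simplicity secured, I would carry out the standard error analysis. Writing $e_n := p_n - p(\bm{U})$ and Taylor-expanding the identity $\phi(p(\bm{U}))=0$ about $p_n$, there exists $\xi_n$ between $p_n$ and $p(\bm{U})$ with
$$
0 = \phi(p(\bm{U})) = \phi(p_n) + \phi'(p_n)\big(p(\bm{U}) - p_n\big) + \tfrac{1}{2}\phi''(\xi_n)\big(p(\bm{U}) - p_n\big)^2.
$$
Dividing by $\phi'(p_n)$ and substituting the Newton update $p_{n+1} = p_n - \phi(p_n)/\phi'(p_n)$ yields
$$
e_{n+1} = p_{n+1} - p(\bm{U}) = \frac{\phi''(\xi_n)}{2\,\phi'(p_n)}\,e_n^2 .
$$
Because $p_n \to p(\bm{U})$ and $\xi_n$ is trapped between $p_n$ and $p(\bm{U})$, continuity gives $\phi'(p_n) \to \phi'(p(\bm{U})) \neq 0$ and $\phi''(\xi_n) \to \phi''(p(\bm{U}))$, so that
$$
\lim_{n\to\infty} \frac{|e_{n+1}|}{e_n^2} = \left| \frac{\phi''(p(\bm{U}))}{2\,\phi'(p(\bm{U}))} \right| < \infty ,
$$
which is precisely quadratic convergence.

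The main obstacle is the simplicity of the root $p(\bm{U})$; everything after that is the textbook Newton estimate. Fortunately, the delicate root-counting already completed in Theorem \ref{thm:2negroot2posroot} settles simplicity cleanly, so no additional hard analysis is needed. One minor point to keep in mind is that the error bound is asymptotic and requires $\phi'(p_n)$ to stay bounded away from zero; the monotone convergence from Theorem \ref{thm:mono} confines the iterates to a one-sided neighborhood of $p(\bm{U})$ on which $\phi'$ does not vanish, so the denominator $\phi'(p_n)$ never causes trouble.
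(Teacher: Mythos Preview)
Your proposal is correct and follows essentially the same approach as the paper's proof: invoke Theorem \ref{thm:2negroot2posroot} (together with Theorem \ref{thm:smaller}) to conclude that $p(\bm{U})$ is a simple root of $\phi$, and then cite the standard quadratic convergence of the NR method at a simple root. The only difference is that you spell out the textbook Taylor-expansion error estimate explicitly, whereas the paper simply asserts quadratic convergence once simplicity is established.
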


\begin{proof}
	Thanks to Theorems \ref{thm:2negroot2posroot} and \ref{thm:smaller}, we known that the physical pressure 
	$p({\bf U})$, as a root of $\phi(p)$, is not a repeated root, namely, its multiplicity is one. 
	Therefore, as a NR iteration, Algorithm \ref{expression:newnewton} has quadratic convergence.  
\end{proof}

\begin{rem}
	In practical computations, the Horner's rule can be applied to efficiently evaluate the polynomials $\phi(p)$ and $\phi'(p)$. 
\end{rem}

\begin{rem}
	When using the NR method to solve $\phi(p)=0$, oscillations may occur when the value of $\phi(p_n)$ is very close to zero; see \cite{flocke2015algorithm}. Detecting the oscillations  caused by round-off errors and then stopping the iteration are important to avoid unnecessary computational costs. Since Algorithm \ref{expression:newnewton} has monotonic convergence (Theorem \ref{thm:mono}), it is reasonable to expect that oscillations occur when the theoretical monotonicity of the iteration sequence is lost due to round-off errors. Therefore, in addition to the standard stopping criterion $|\phi(p_n)|<\epsilon_{target}$ ($\epsilon_{target}$ denotes the target accuracy), we also include such oscillations as a termination criterion in our computations. 
\end{rem}

\subsection{Analytical expression of $p({\bm U})$}\label{section:Analytical}
With Lemma \ref{lemma:signofa} and Theorems \ref{thm:2negroot2posroot}--\ref{thm:smaller}, we can obtain the analytical expression of pressure $p({\bm U})$ by using the Ferrari method. 

\begin{Algorithm}[Analytical] \label{Analytical}
	$p({\bm U})=\frac{M_5-c_3-M_6}{4}$ with 
	\begin{eqnarray*}
		\left\{
		\begin{aligned}
			M_1&=\frac{c_2^2+12c_0-3c_3c_1}{9}, \\
			M_2&=\frac{27c_1^2+2c_2^3+27c_3^2c_0-72c_2c_0-9c_3c_2c_1}{54},\\
			M_3&=\left(M_2+(M_2^2-M_1^3)^{\frac{1}{2}}\right)^{\frac{1}{3}},\\
			M_4&=M_3+\frac{M_1}{M_3}+\frac{c_2}{3},\\
			M_5&=(c_3^2+4(M_4-c_2))^{\frac{1}{2}},\\
			M_6&=\left((c_3-M_5)^2-8\Big(M_4-\frac{c_3M_4-2c_1}{M_5}\Big)\right)^{\frac{1}{2}},
		\end{aligned}
		\right.
	\end{eqnarray*}
	where the specific expressions of $c_k$ $(k=0,1,2,3)$ are given in \eqref{eq:expression of a}.  
	The function $(\cdot)^{\frac{1}{n}}$ $(n=2,3)$ is a single-valued complex function defined as follows 
	\begin{equation}
		(A_1+A_2i)^{\frac{1}{n}}=\left(A^{\frac{1}{n}}\cos\frac{\theta}{n}\right)+\left(A^{\frac{1}{n}}\sin\frac{\theta}{n}\right)i,
	\end{equation}
	where $i=\sqrt{-1}$ is the imaginary unit, $A_1,A_2\in\mathbb{R}$ and 
	\begin{align}
		A&=\sqrt{A_1^2+A_2^2},\\
		\theta&=\arctan\frac{A_2}{A_1}.
	\end{align}
	It can be seen that $p({\bm U})$ is essentially expressed explicitly in terms of $\bm U$. However, because the $(\cdot)^{\frac{1}{n}}$ operation $(n=2,3)$ inherently includes trigonometric, inverse trigonometric, and cube root or square root operations, using Algorithm \ref{Analytical} to calculate $p({\bm U})$ is very costly and sometimes of low accuracy. We will observe this fact and compare Algorithm \ref{Analytical} with our NR methods in the numerical experiments in Section \ref{sec:numerical_tests}. 
\end{Algorithm}

\subsection{NR-II method: robust PCP convergent NR iteration}\label{algor:robust newton}
To facilitate the algorithm design, we define 
\begin{equation}\label{eq:express of psi}
	\psi(p):=(E+p)\Phi(p) =| \bm{m} |^2+(E+p)\left(\frac{p}{\gamma-1}-E\right)+D\sqrt{(E+p)^2-| \bm{m} |^2}
\end{equation}
and transform the equation \eqref{eq:primitive} into
\begin{equation}\label{eq:primitive2}
	\psi(p)=0.
\end{equation}

In this subsection, we aim to study the NR iteration method solving the equation \eqref{eq:primitive2}. In particular, we would like to find an appropriate initial value $p_0$, 
such that the resulting NR method is PCP and provably convergent. 
Moreover, we hope that $p_0$ is sufficiently close to $p(\bm U)$ in order to reduce the number of iterations.

Define 
\begin{equation}\label{pcu}
	p_c^u:=\frac{(\gamma-2)E+\sqrt{(2-\gamma)^2E^2-4(\gamma-1)\left[(| \bm{m} |^2-E^2)+D\sqrt{E^2-| \bm{m} |^2}\right]}}{2},
\end{equation}
which is the root of the following equation 
\begin{equation}\label{key3123}
	h_1(p_c^u)=h_2(0). 
\end{equation}
We observe that $p_c^u$ is closer to $p(\bm U)$ than $p_b^u$, as shown in the Figure \ref{fig:phikf} and proven in Lemma \ref{lemma:upbound}.

\begin{lemma} \label{lemma:upbound}
	$p(\bm U)<p_c^u<p_b^u<p_a^u$.
\end{lemma}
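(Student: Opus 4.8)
The plan is to reduce all three inequalities to the strict monotonicity of the quadratic $h_3(p)=|\bm{m}|^2+(E+p)(\frac{p}{\gamma-1}-E)$ on $[0,+\infty)$, which was already established in the proof of Lemma~\ref{lemma:2posroot}. The key observation is that each of the three quantities $p(\bm{U})$, $p_c^u$, and $p_b^u$ can be characterized through the value of $h_3$ at that point. Indeed, recalling that $h_1(p)=h_3(p)^2$ and $h_2(0)=D^2(E^2-|\bm{m}|^2)$, the defining relation \eqref{key3123} for $p_c^u$ reads $h_3(p_c^u)^2=D^2(E^2-|\bm{m}|^2)$; taking the appropriate (negative) branch, verified in the final step below, gives $h_3(p_c^u)=-D\sqrt{E^2-|\bm{m}|^2}$. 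Similarly, writing $\psi(p)=h_3(p)+D\sqrt{(E+p)^2-|\bm{m}|^2}$ as in \eqref{eq:express of psi} and using $\psi(p(\bm{U}))=(E+p(\bm{U}))\Phi(p(\bm{U}))=0$, I obtain $h_3(p(\bm{U}))=-D\sqrt{(E+p(\bm{U}))^2-|\bm{m}|^2}$. Finally, $h_3(p_b^u)=0$ by the very definition of $p_b^u$.

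With these three identities in hand, I would establish the chain as follows. For $p(\bm{U})<p_c^u$: since $p(\bm{U})>0$ and $E>0$, we have $(E+p(\bm{U}))^2-|\bm{m}|^2>E^2-|\bm{m}|^2>0$, hence $h_3(p(\bm{U}))=-D\sqrt{(E+p(\bm{U}))^2-|\bm{m}|^2}<-D\sqrt{E^2-|\bm{m}|^2}=h_3(p_c^u)$, and strict monotonicity of $h_3$ yields $p(\bm{U})<p_c^u$. For $p_c^u<p_b^u$: since $D\sqrt{E^2-|\bm{m}|^2}>0$, we have $h_3(p_c^u)<0=h_3(p_b^u)$, so again monotonicity gives $p_c^u<p_b^u$. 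The last inequality $p_b^u<p_a^u$ is already contained in Lemma~\ref{lemma:2posroot}, where $p_a^u$ was identified as the positive root of $\phi$ lying in $(p_b^u,+\infty)$.

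The only genuinely delicate point is the correct identification of the branch in the definition of $p_c^u$: one must confirm that the root given by formula \eqref{pcu} really is the solution of $h_3(p)=-D\sqrt{E^2-|\bm{m}|^2}$ lying in $(0,p_b^u)$, rather than a spurious root of $h_3(p)=+D\sqrt{E^2-|\bm{m}|^2}$. I would verify this by checking $h_3(0)=|\bm{m}|^2-E^2<-D\sqrt{E^2-|\bm{m}|^2}<0$, where the left inequality is equivalent to $D<\sqrt{E^2-|\bm{m}|^2}$, i.e.\ to the admissibility constraint $E^2>D^2+|\bm{m}|^2$. Since $h_3$ increases continuously from $h_3(0)$ to $0$ on $[0,p_b^u]$, the level $-D\sqrt{E^2-|\bm{m}|^2}$ is attained exactly once on that interval, and solving the corresponding quadratic recovers precisely \eqref{pcu}. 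Aside from this bookkeeping, the argument is routine, since all the hard structural work (monotonicity of $h_3$ and the location of the two positive roots of $\phi$) has already been carried out in the proof of Lemma~\ref{lemma:2posroot}.
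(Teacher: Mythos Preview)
Your argument is correct and rests on the same mechanism as the paper's proof: both order the three points $p(\bm U),p_c^u,p_b^u$ by comparing the values of a monotone function at those points and then invoking monotonicity. The paper works with $h_1=h_3^2$, which is strictly decreasing on $(0,p_b^u)$, and compares $h_1(p_b^u)=0<h_2(0)<h_2(p(\bm U))=h_1(p(\bm U))<h_1(0)$; you instead work directly with the increasing function $h_3$ and compare $h_3(p(\bm U))=-D\sqrt{(E+p(\bm U))^2-|\bm m|^2}<-D\sqrt{E^2-|\bm m|^2}=h_3(p_c^u)<0=h_3(p_b^u)$. These are the same computation on the linear versus squared scale, and your inequality $(E+p(\bm U))^2-|\bm m|^2>E^2-|\bm m|^2$ is exactly the paper's $h_2(p(\bm U))>h_2(0)$ after taking square roots.

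The only genuine extra work in your route is the branch identification for $p_c^u$, since passing from $h_1(p_c^u)=h_2(0)$ to $h_3(p_c^u)=-D\sqrt{E^2-|\bm m|^2}$ requires choosing a sign. You handle this correctly via the admissibility constraint $E^2>D^2+|\bm m|^2$, which gives $h_3(0)<-D\sqrt{E^2-|\bm m|^2}<0$ and pins down the root in $(0,p_b^u)$. The paper sidesteps this by staying at the level of $h_1$, where the sign ambiguity never arises and the formula \eqref{pcu} is read off directly from the quadratic $h_1(p)=h_2(0)$. Either way the argument is complete.
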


\begin{proof}
	According Lemma \ref{lemma:2posroot} and Theorem \ref{thm:smaller}, we 
	obtain  $0<p({\bm U})<p_b^u<p_a^u$. 
	Recalling that $h_1(p)$ is strictly decreasing in the interval $(0,p_b^u)$ and $h_2(p)$ is strictly increasing in $(0,p_b^u)$, we have
	\begin{equation*}
		h_1(p_b^u)=0<D^2(E^2-| \bm{m} |^2)=h_2(0)<h_2(p({\bm U}))=h_1(p({\bm U}))<h_1(0). 
	\end{equation*}
	By the intermediate value theorem, there exists a unique point $p_c^u\in(0,p_b^u)$ such that $h_1(p_c^u)=h_2(0)$ and $p({\bm U})<p_c^u<p_b^u$. The expression \eqref{pcu} of $p_c^u$ can be easily obtained by solving the equation $h_1(p_c^u)=h_2(0)$. The proof is completed. 
\end{proof}

Although $p_c^u$ is close to $p(\bm U)$, the NR method for solving the equation \eqref{eq:primitive2} with $p_0=p_c^u$ is not always convergent and PCP. 
One can verify that 
\begin{align}\nonumber
	\psi'(p)&=\frac{2p+(2-\gamma)E}{\gamma-1}+\frac{D(E+p)}{\sqrt{(E+p)^2-| \bm{m} |^2}}>0 \quad \forall p \ge 0, 
	\\ \nonumber
	\psi''(p) &=\frac{2}{\gamma-1}-\frac{D| \bm{m} |^2}{\left[(E+p)^2-| \bm{m} |^2\right]^{\frac{3}{2}}},
	\\ \label{eq:232}
	\psi'''(p)&>0 \quad \forall p \ge 0,   
\end{align}
and $\lim\limits_{p\to +\infty}\psi''(p)=\frac{2}{\gamma-1}>0$. 
If $\psi''(0)\geq0$, then \eqref{eq:232} implies that $\psi''(p)>0$ when $p>0$. If $\psi''(0)<0$, then because $\lim\limits_{p\to +\infty}\psi''(p)>0$, there exists a inflection point $p_{in}>0$ satisfying $\psi''(p_{in})=0$. Since  $\psi'''(p)>0$ when $p>0$, we can deduce that $\psi''(p)>0$ on $(p_{in},+\infty)$ and $\psi''(p)<0$ on $(0,p_{in})$. Therefore, the concavity/convexity structure of the function $\psi(p)$ in $[0,+\infty)$ can only have three cases: 
\begin{enumerate}[(a)]
	\item $\psi''(p)>0$ when $p>0$.
	\item There exists a inflection point $p_{in}\in(0,p(\bm U))$ satisfying $\psi''(p_{in})=0$. $\psi''(p)>0$ on $(p_{in},+\infty)$, while $\psi''(p)<0$ on $(0,p_{in})$.
	\item There exists a inflection point $p_{in}\geq p(\bm U)$ satisfying $\psi''(p_{in})=0$. $\psi''(p)>0$ on $(p_{in},+\infty)$, while $\psi''(p)<0$ on $(0,p_{in})$. 
\end{enumerate}
The graphs of $\psi(p)$ are illustrated in Figure \ref{fig:convexity} for Cases (a),  (b), and  (c), respectively.

\begin{figure}[!htb]
	\centering
	\subfloat[]{
		\includegraphics[width=0.3\textwidth]{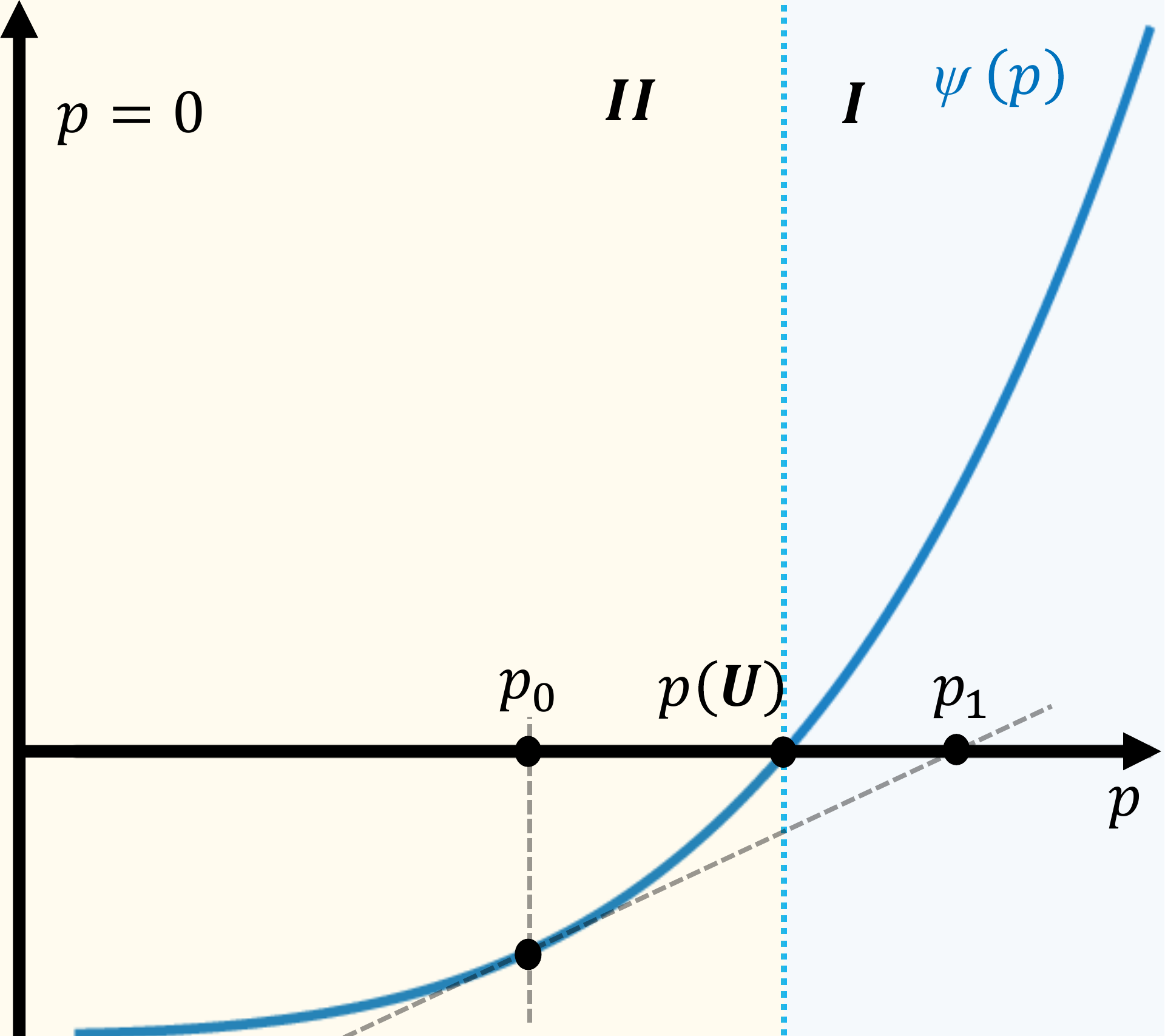}\label{fig:convexity1}
	}
	\subfloat[]{
		\includegraphics[width=0.3\textwidth]{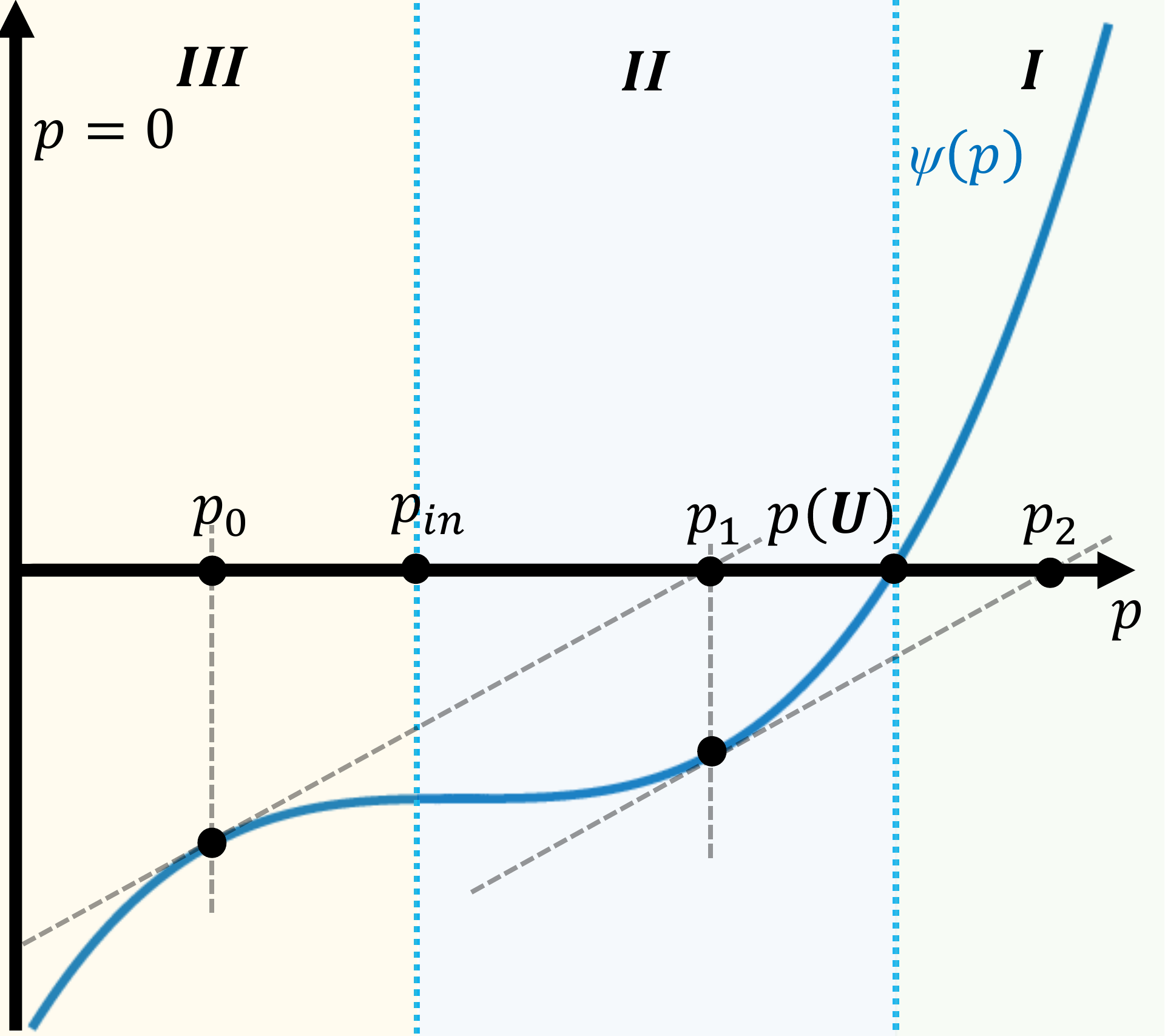}\label{fig:convexity2}
	}
	\subfloat[]{
		\includegraphics[width=0.3\textwidth]{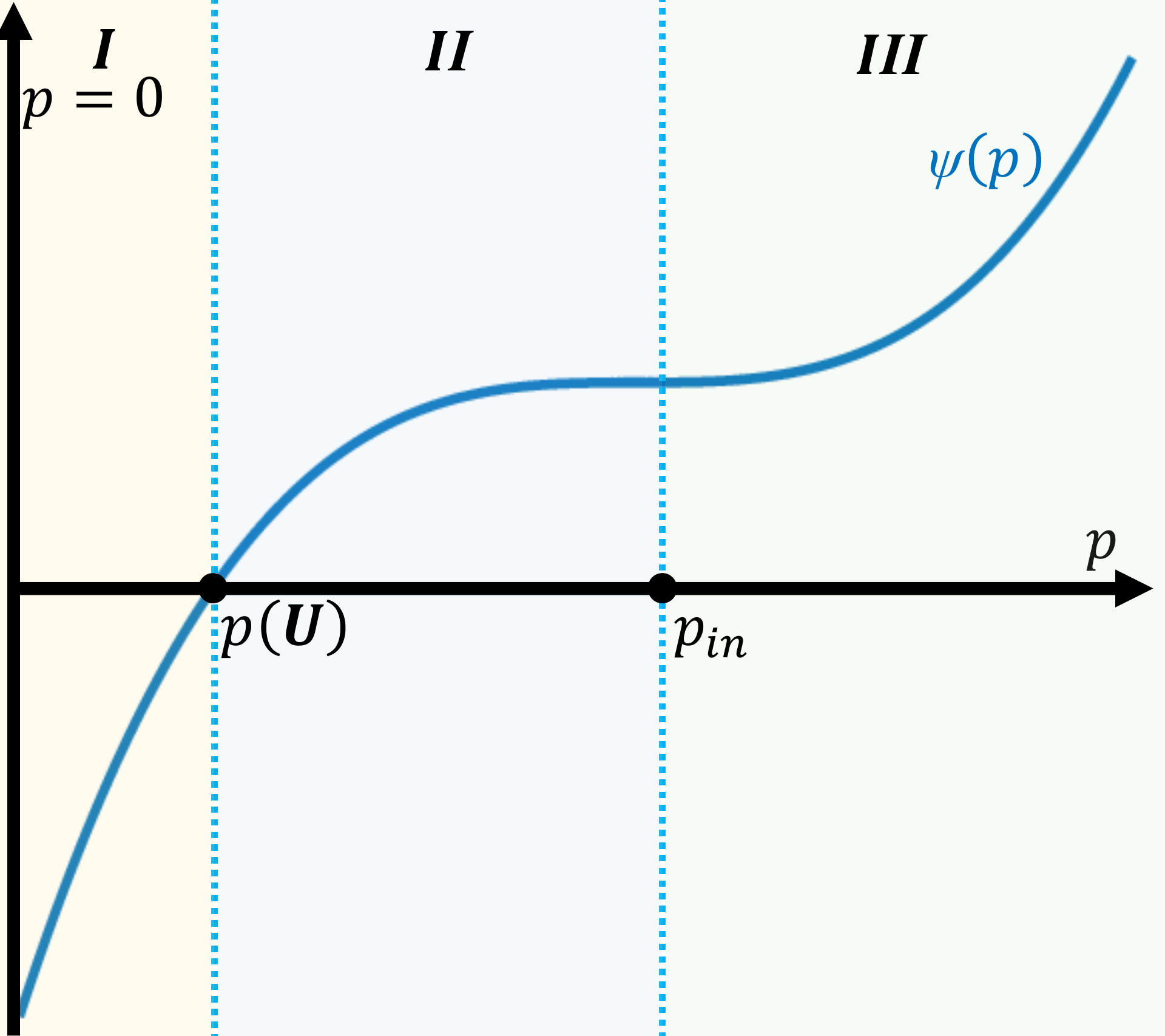}\label{fig:convexity3}
	}
	\caption{Three possible cases of the concavity/convexity structure of $\psi(p)$.}\label{fig:convexity}
\end{figure}



\begin{Theorem}\label{thm:old nr convergence}
	In Cases (a) and (b), 
	the NR iteration for solving $\psi(p)=0$ always converges to $p({\bm U})$ with any initial guess $p_0 \ge 0$. 
	In Case (c), the NR iteration for solving $\psi(p)=0$ always converges to $p({\bm U})$ with any initial guess $p_0\in[0,p({\bm U})]$, and if $p_0 > p({\bf U}) $, then 
	it	may fail to converge to $p({\bm U})$. Furthermore, if the NR iteration fails to converge, then  negative $p_{n}$ would appear in the iteration sequence $\left\{p_n\right\}_{n\geq0}$.
\end{Theorem}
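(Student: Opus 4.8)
The plan is to classify the behaviour of the Newton iterates according to which of the three concavity/convexity regimes (a)--(c) holds and where the initial guess $p_0$ sits relative to the root $p(\bm U)$, and then to invoke the monotone-convergence Lemmas \ref{lemma:newtonmono1} and \ref{lemma:newtonmono2} on the appropriate sub-intervals. The starting observations I would record are that, since $\psi'(p)>0$ on $[0,+\infty)$, the function $\psi$ is strictly increasing with the single root $p(\bm U)$, so that $\psi(p)<0$ for $p\in[0,p(\bm U))$ and $\psi(p)>0$ for $p>p(\bm U)$; in particular $\psi(p_n)<0$ forces $p_{n+1}>p_n$ while $\psi(p_n)>0$ forces $p_{n+1}<p_n$. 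I would also record that in Cases (a) and (b) one has $p_{in}<p(\bm U)$ (or no inflection at all), so that $\psi''>0$ on the whole ray $[p(\bm U),+\infty)$, whereas in Case (c) one has $\psi''\le 0$ on $[0,p_{in}]\supseteq[0,p(\bm U)]$.

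With these facts, the convergent cases would go as follows. For $p_0\ge p(\bm U)$ in Cases (a) and (b), the interval $[p(\bm U),p_0]$ lies in the convex regime, so $\psi'>0$ and $\psi''\ge 0$ there, and Lemma \ref{lemma:newtonmono2}(ii) gives a monotonically decreasing sequence converging to $p(\bm U)$. For $p_0\in[0,p(\bm U))$ in Cases (a) and (b), I would argue by a \emph{monotone-or-overshoot} dichotomy: as long as $p_n<p(\bm U)$ the iterates strictly increase; either they remain below $p(\bm U)$ for all $n$, in which case the bounded monotone sequence converges and its limit must be the unique root $p(\bm U)$ (passing to the limit in the Newton recursion and using $\psi'>0$), or some iterate first satisfies $p_{N+1}\ge p(\bm U)$, after which $p_{N+1}$ serves as a new initial guess in $[p(\bm U),+\infty)$ and the decreasing-case argument applies. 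Finally, for Case (c) with $p_0\in[0,p(\bm U)]$, the sub-interval $[p_0,p(\bm U))$ lies in the concave regime, so $\psi'>0$ and $\psi''\le 0$, and Lemma \ref{lemma:newtonmono1}(ii) directly yields monotone increasing convergence; this also shows no overshoot occurs, so the iteration never leaves $[0,p(\bm U)]$.

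The substance of the theorem, and the part I expect to be the main obstacle, is Case (c) with $p_0>p(\bm U)$. For the assertion that convergence \emph{may} fail, I would exhibit an explicit admissible $\bm U$ in Case (c) together with a $p_0>p(\bm U)$ whose Newton step lands at a value so negative that $(E+p_1)^2<|\bm m|^2$, so that $\psi$ is no longer real there and the iteration cannot return to $p(\bm U)$. The harder, universal statement is the contrapositive of the ``furthermore'': if every iterate stays in $[0,+\infty)$, then the iteration converges. Here I would track the iterates through the three regions $I_1=[0,p(\bm U)]$, $I_2=(p(\bm U),p_{in}]$, and $I_3=(p_{in},+\infty)$. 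Using the tangent-below-graph inequality on the convex region, I would show the orbit cannot remain in $I_3$ forever (a decreasing sequence bounded below by $p_{in}$ would converge to a root exceeding $p(\bm U)$, which is impossible), and using the tangent-above-graph inequality on the concave region, I would show that one step from $I_2$ lands in $(-\infty,p(\bm U)]$, hence in $I_1$ under the nonnegativity assumption; once in $I_1$, the Case-(c) argument above finishes the job. The delicate bookkeeping is precisely this region-by-region control of where a single Newton step can send an iterate, combined with the convexity/concavity tangent estimates, and this is where the proof is most technical.
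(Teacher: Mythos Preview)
Your proposal is correct and follows essentially the same case-by-case strategy as the paper's proof: invoke Lemmas \ref{lemma:newtonmono1}--\ref{lemma:newtonmono2} on the appropriate convex/concave sub-intervals, and use a monotone-bounded-sequence argument to rule out that the iterates get stuck in a region not containing the root.

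There are two small differences worth noting. First, for $p_0\in[0,p(\bm U))$ in Cases (a) and (b), the paper further splits on the inflection point $p_{in}$ and uses the tangent-below-graph inequality to show that from the convex sub-interval $[p_{in},p(\bm U))$ the very next iterate already lands in $[p(\bm U),+\infty)$, whereas you use a single unified monotone-or-overshoot argument with threshold $p(\bm U)$; yours is slightly cleaner and avoids the extra bookkeeping. Second, for the ``may fail'' assertion in Case (c) you propose to exhibit an explicit admissible $\bm U$ and $p_0$ producing a negative iterate; the paper does not actually supply such an example and simply observes that when $p_1<0$ ``the convergence cannot be guaranteed,'' so you are being more careful here than the paper itself. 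One minor imprecision in your $I_3$ analysis: if $p_{in}=p(\bm U)$ exactly, the orbit \emph{can} remain in $I_3$ and still converge (to $p(\bm U)=p_{in}$), so the contradiction you describe only fires when $p_{in}>p(\bm U)$ strictly; but this boundary case is harmless since convergence is what you want anyway.
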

\begin{proof}
	We discuss the three cases in Figure \ref{fig:convexity} separately.
	
	\begin{enumerate}
		\item[(a)] In this case, $\psi''(p)>0$ when $p>0$. 
		
		(I) If $p_0\in [p({\bm U}),+\infty)$, then the iteration sequence $\left\{p_n\right\}_{n\geq0}$ converges monotonically to $p({\bm U})$ according to Lemma \ref{lemma:newtonmono2}. 
		
		(II) If $p_0\in[0,p({\bm U}))$, then $p_1=p_0-\frac{\psi(p_0)}{\psi'(p_0)}$, $\psi(p_1)>\psi(p_0)+(p_1-p_0)\psi'(p_0)=0$. Thus $p_1\in [p({\bm U}),+\infty)$. Then, following the discussion of Case (I), the iteration sequence $\left\{p_n\right\}_{n\geq1}$ converges monotonically to $p({\bm U})$.
		
		\item[(b)] In this case, $p({\bm U})>p_{in}>0$, where $\psi''(p_{in})=0$. 
		
		(I) If $p_0\in [p({\bm U}),+\infty)$, then the iteration sequence $\left\{p_n\right\}_{n\geq0}$ converges monotonically to $p({\bm U})$ according to Lemma \ref{lemma:newtonmono2}. 
		
		(II) If $p_0\in [p_{in},p({\bm U}))$, then similarly to Case (a)(II), we have $p_1\in [p({\bm U}),+\infty)$, and the iteration sequence $\left\{p_n\right\}_{n\geq1}$ converges monotonically to $p({\bm U})$. 
		
		(III) If $p_0\in[0,p_{in})$, $p_1=p_0-\frac{\psi(p_0)}{\psi'(p_0)}>p_0$. If $p_1\ge p_{in}$, then the discussion returns to Cases (I) and (II). Thus we only need to discuss the case when $p_1\in[0,p_{in})$. By repeatedly following the aforementioned discussions, as long as $p_n \ge p_{in}$ appears in the iteration, we can return to the discussion of Case (I) or (II), and conclude that the NR method converges. It remains to discuss whether it is possible that $p_n < p_{in}$ for all $n \ge 0$. Assume that such a situation occurs, then since $p_n < p_{in}$ and $p_{n+1}>p_{n}$, according to the monotone bounded convergence theorem, $\{p_n\}_{n\geq0}$ has a limit $p^{*}\in[0,p_{in}] $. Therefore, 
		$$0=\displaystyle\lim_{n\to +\infty}(p_{n+1}-p_n)=-\displaystyle\lim_{n\to +\infty}\frac{\psi(p_n)}{\psi'(p_n)}=-\frac{\psi(p^{*})}{\psi'(p^{*})},$$ 
		which yields $\psi(p^{*})=0$. This is contradictory to $p^{*}\in[0,p_{in}]$ and $p_{in}<p({\bm U})$ (note that $p({\bm U})$ is the unique positive root of $\psi(p)$). Hence, the assumption is incorrect, and there always exists a $n$ such that $p_n \ge p_{in}$. In short, the NR method always converges. 
		
		\item[(c)] In this case, $p_{in}\ge p({\bm U})>0$ where $\psi''(p_{in})=0$. 
		
		(I) If $p_0\in[0,p({\bm U})]$, then the iteration sequence $\left\{p_n\right\}_{n\geq0}$ converges monotonically to $p({\bm U})$ according to Lemma \ref{lemma:newtonmono1}. 
		
		(II) If $p_0\in(p({\bm U}),p_{in}]$, then similar to Case (a)(II) and Case (b)(II), we have $p_1\le p(\bm U)$. If $p_1<0$, the convergence cannot be guaranteed. If $p_1\geq0$, then we return to the discussion of Case (I), and conclude that the iterative sequence converges to $p(\bm U)$.
		
		(III) If $p_0\in(p_{in},+\infty)$, similar to the discussion in Case (b)(III), we can use proof by contradiction to prove that there exists an iterative value $p_n$ belongs to the interval $(-\infty,0)$, $[0,p(\bm U)]$, or $(p(\bm U),p_{in})$. 
		If $p_n\in(-\infty,0)$, then the convergence cannot be guaranteed. If $p_n\in[0,p(\bm U)]$, then we return to the discussion of Case (c)(I) and conclude that the iterative sequence  converges to $p(\bm U)$. If $p_n\in(p(\bm U),p_{in}]$, then we return to the discussion of  Case (c)(II): the iteration sequence either converges to $p(\bm U)$, or a negative value appears in the iterative sequence so that the convergence cannot be guaranteed.
		
		In summary, in Case (c), the iteration sequence either remains non-negative and converges to $p(\bm U)$, or a negative number appears in the iterative sequence so that the convergence cannot be guaranteed. 
	\end{enumerate}
	The proof is completed. 
\end{proof}

As a direct consequence of Theorem \ref{thm:old nr convergence}, we have the following conclusion. 

\begin{Theorem}\label{prop:0ini robust}
	The NR method with $p_0=0$ for solving the equation $\psi(p)=0$ is always convergent and PCP.
\end{Theorem}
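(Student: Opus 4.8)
The plan is to derive this theorem as a direct corollary of the case analysis in Theorem \ref{thm:old nr convergence}, by verifying that the specific choice $p_0 = 0$ meets the convergence hypotheses in all three concavity/convexity cases and then separately checking positivity of the iterates. Since the statement bundles two claims (convergence and PCP), I would prove them one at a time, handling convergence first because it is essentially immediate.

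For convergence, I would argue as follows. Since ${\bm U} \in {\mathcal G}$, the physical pressure satisfies $p({\bm U}) > 0$, so $p_0 = 0$ lies in the closed interval $[0, p({\bm U})]$. In Cases (a) and (b), Theorem \ref{thm:old nr convergence} guarantees convergence to $p({\bm U})$ for any initial guess $p_0 \ge 0$, which covers $p_0 = 0$. In Case (c), the same theorem guarantees convergence for any $p_0 \in [0, p({\bm U})]$, and $0$ belongs to this interval. Hence, in every case the iteration started at $p_0 = 0$ converges to $p({\bm U})$.

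For the PCP property I must show that every iterate $p_n$ with $n \ge 1$ is positive, and this is the more delicate part. The starting observation is that, because $\psi'(p) > 0$ on $[0, +\infty)$ and $p({\bm U})$ is the unique positive root of $\psi$, we have $\psi(0) < 0$; combined with $\psi'(0) > 0$ this gives $p_1 = -\psi(0)/\psi'(0) > 0$. To propagate positivity to all later iterates, I would reuse the monotone structures already established inside the proof of Theorem \ref{thm:old nr convergence}. In Case (a), the first step overshoots into $[p({\bm U}), +\infty)$ and the iteration then decreases monotonically to $p({\bm U})$, so $p_n \ge p({\bm U}) > 0$ for all $n \ge 1$. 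In Case (c), the iteration increases monotonically from $0$ toward $p({\bm U})$ while staying in $[0, p({\bm U})]$, so all $p_n$ ($n \ge 1$) lie in $(0, p({\bm U})]$.

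The main obstacle, and hence where I would spend the most care, is Case (b), in which the iteration may first wander through the subinterval $[0, p_{in})$ before reaching the monotone-decreasing regime. Here I would invoke the fact---already proved in the analysis of Theorem \ref{thm:old nr convergence}, Case (b)(III)---that the iteration is strictly increasing as long as it remains in $[0, p_{in})$; since it starts at $p_0 = 0$ with $p_1 > 0$, every iterate in this phase is positive, and once an iterate reaches $[p_{in}, +\infty)$ the argument of Case (b)(I)/(II) keeps the tail bounded below by $p({\bm U}) > 0$. Collecting these observations shows $p_n > 0$ for all $n \ge 1$ in each case, which establishes that the NR method with $p_0 = 0$ is PCP.
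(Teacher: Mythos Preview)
Your proposal is correct and follows essentially the same route as the paper, which simply states the result as a direct consequence of Theorem~\ref{thm:old nr convergence}; you have merely made explicit the case-by-case check (especially the PCP argument in Case~(b)) that the paper leaves implicit. One small imprecision: in Case~(b), an iterate that first lands in $[p_{in}, p({\bm U}))$ is not itself bounded below by $p({\bm U})$ but only by $p_{in} > 0$---the bound $\ge p({\bm U})$ applies from the next iterate onward---but this does not affect the positivity conclusion.
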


\begin{Theorem}\label{thm:robust NR convergent}
	If
	\begin{equation}\label{cond:oldnewton convergence}
		D<\frac{E^2-| \bm{m} |^2}{E},
	\end{equation}
	then the NR method for solving $\psi(p)=0$ with any initial guess $p_0\geq 0$ is always PCP and convergent.
\end{Theorem}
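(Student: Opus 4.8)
The plan is to collapse the whole statement into a single positivity property of the Newton map and then inherit the convergence half directly from Theorem \ref{thm:old nr convergence}. Write the NR map for $\psi$ as $N(p):=p-\psi(p)/\psi'(p)$. Since $\psi'(p)>0$ for all $p\ge 0$ by \eqref{eq:232}, we have $N(p)=g(p)/\psi'(p)$ with $g(p):=p\,\psi'(p)-\psi(p)$, so $N(p)>0$ is equivalent to $g(p)>0$. If I can show $g(p)>0$ for every $p\ge 0$, then starting from any $p_0\ge 0$ every iterate $p_{n}=N(p_{n-1})$ is strictly positive by induction, which is exactly the PCP property; moreover no negative value can ever appear, and then Theorem \ref{thm:old nr convergence}, whose only failure mode (in Case (c)) is the appearance of a negative iterate, forces $\{p_n\}$ to converge to $p(\bm U)$. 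Thus the entire theorem reduces to the single claim $g(p)>0$ on $[0,+\infty)$.

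Next I would compute $g$ explicitly. Using the definition \eqref{eq:express of psi} of $\psi$ together with the expression for $\psi'$ in \eqref{eq:232}, the square-root part and the rational part recombine into the compact form
\[
 g(p)=\frac{p^2}{\gamma-1}+\big(E^2-|\bm m|^2\big)-D\,\Theta(p),\qquad \Theta(p):=\frac{E(E+p)-|\bm m|^2}{\sqrt{(E+p)^2-|\bm m|^2}}.
\]
One checks that $g'(p)=p\,\psi''(p)$, so the sign of $g'$ matches that of $\psi''$ and (in Cases (b) and (c)) the minimum of $g$ is attained at the implicitly defined inflection point $p_{in}$; this is precisely why a naive estimate evaluated only at $p=0$ cannot suffice. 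The heart of the argument is instead the elementary sharp bound $0<\Theta(p)\le E$ for all $p\ge 0$. Squaring, this amounts to $\big(E(E+p)-|\bm m|^2\big)^2\le E^2\big((E+p)^2-|\bm m|^2\big)$, whose difference simplifies to $|\bm m|^2\big(|\bm m|^2-E^2-2Ep\big)\le 0$; this holds because ${\bm U}\in{\mathcal G}$ forces $|\bm m|^2<E^2$ and $p\ge 0$, while positivity of $\Theta$ follows from $E(E+p)-|\bm m|^2\ge E^2-|\bm m|^2>0$.

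Finally I would combine this bound with the hypothesis \eqref{cond:oldnewton convergence}: since $D>0$ and $\Theta(p)\le E$,
\[
 g(p)\ \ge\ \frac{p^2}{\gamma-1}+\big(E^2-|\bm m|^2\big)-DE\ \ge\ \big(E^2-|\bm m|^2\big)-DE\ >\ 0,
\]
where the last inequality is exactly $DE<E^2-|\bm m|^2$, i.e.\ condition \eqref{cond:oldnewton convergence}. This proves $g>0$ on $[0,+\infty)$ and hence, by the first paragraph, that the NR method with any $p_0\ge 0$ is PCP and convergent. I expect the main difficulty here to be conceptual rather than computational: the decisive moves are (i) recognizing that proving positivity of the iterates is both necessary and sufficient (necessity because $p_1=N(p_0)$ must be positive for every admissible $p_0\ge 0$, sufficiency because convergence is then handed over by Theorem \ref{thm:old nr convergence}), and (ii) isolating the scalar bound $\Theta(p)\le E$, which lets the implicitly defined minimizer $p_{in}$ be bypassed entirely. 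The only case needing a separate remark is $|\bm m|=0$, where $\Theta\equiv E$ and the same chain of inequalities still yields $g\ge E^2-DE>0$.
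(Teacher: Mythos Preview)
Your proof is correct and follows essentially the same route as the paper: both reduce the theorem to the single inequality $p\,\psi'(p)-\psi(p)>0$ on $[0,\infty)$, bound the quantity $\Theta(p)=\dfrac{E(E+p)-|\bm m|^2}{\sqrt{(E+p)^2-|\bm m|^2}}$ by $E$, drop the nonnegative $p^2/(\gamma-1)$ term, and then invoke Theorem~\ref{thm:old nr convergence} for convergence. The only cosmetic difference is that the paper obtains $\Theta(p)\le E$ via monotonicity of $\Theta$ together with $\lim_{p\to\infty}\Theta(p)=E$, whereas you establish it by the direct algebraic identity $\Theta(p)^2-E^2=|\bm m|^2\big(|\bm m|^2-E^2-2Ep\big)/\big((E+p)^2-|\bm m|^2\big)\le 0$; your version is slightly cleaner and also makes the edge case $|\bm m|=0$ transparent.
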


\begin{proof}
	First, we prove the PCP property, namely, show that the iteration sequence $\left\{p_n\right\}_{n\geq1}$ are always positive. 
	Assume that $p_{n}\geq0$, then it suffices to prove  $p_{n+1}=p_{n}-\frac{\psi(p_n)}{\psi'(p_n)} >0$. Recall that $\psi'(p)>0$ for all $p \in [0,+\infty)$. Note that $p_{n}-\frac{\psi(p_n)}{\psi'(p_n)} > 0$ is equivalent to
	\begin{align*}
		\psi(p_n)&=(E+p_n)\left(\frac{p_n}{\gamma-1}-E\right)+| \bm{m} |^2+D\sqrt{(E+p_n)^2-| \bm{m} |^2}\\
		&< \left(\frac{2p_n+(2-\gamma)E}{\gamma-1}+\frac{D(E+p_n)}{\sqrt{(E+p_n)^2-| \bm{m} |^2}}\right)p_n=p_n\psi'(p_n),
	\end{align*} 
	which is equivalent to
	\begin{equation}\label{eq:ddf}
		D(\gamma-1) < \frac{p_n^2-| \bm{m} |^2(\gamma-1)+E^2(\gamma-1)}{\frac{E^2-| \bm{m} |^2+Ep_n}{\sqrt{(E+p_n)^2-| \bm{m} |^2}}}.
	\end{equation}
	Noting that $p_n^2-| \bm{m} |^2(\gamma-1)+E^2(\gamma-1)\ge (\gamma-1)(E^2-| \bm{m} |^2)>0$, $\frac{E^2-| \bm{m} |^2+Ep_n}{\sqrt{(E+p_n)^2-| \bm{m} |^2}}>0$, and
	\begin{equation*}
		\frac{\partial}{\partial p}\left(\frac{E^2-| \bm{m} |^2+Ep}{\sqrt{(E+p)^2-| \bm{m} |^2}}\right)=\frac{| \bm{m} |^2p}{\left((E+p)^2-| \bm{m} |^2\right)^{\frac{3}{2}}}>0 \quad \forall p\in(0,+\infty), 
	\end{equation*}
	we obtain  
	\begin{equation}
		\frac{p_n^2-| \bm{m} |^2(\gamma-1)+E^2(\gamma-1)}{\frac{E^2-| \bm{m} |^2+Ep_n}{\sqrt{(E+p_n)^2-| \bm{m} |^2}}}
		>
		\frac{(\gamma-1)(E^2-| \bm{m} |^2)}{\lim \limits_{p \to +\infty}\left(\frac{E^2-| \bm{m} |^2+Ep}{\sqrt{(E+p)^2-| \bm{m} |^2}}\right)}
		=
		(\gamma-1)\frac{E^2-| \bm{m} |^2}{E}.\nonumber
	\end{equation}
	Therefore, if $D<\frac{E^2-| \bm{m} |^2}{E}$, then 
	$$D(\gamma-1)<(\gamma-1)\frac{E^2-| \bm{m} |^2}{E}<\frac{p_n^2-| \bm{m} |^2(\gamma-1)+E^2(\gamma-1)}{\frac{E^2-| \bm{m} |^2+Ep_n}{\sqrt{(E+p_n)^2-| \bm{m} |^2}}},$$
	which implies \eqref{eq:ddf}. 
	Hence, $p_{n}\geq0$ implies $p_{n+1}>0$. By induction, we know that the iteration sequence $\left\{p_n\right\}_{n\geq1}$ are always positive. 
	Thanks to Theorem \ref{thm:old nr convergence}, we known that the NR iteration converges. 
	The proof is completed.
	
\end{proof}

%

Inspired by Theorems \ref{prop:0ini robust} and \ref{thm:robust NR convergent}, we design the following algorithm. 

\begin{Algorithm}[NR-II]\label{expression:Robust NR} The NR iteration reads 
	$$p_{n+1}=p_n-\frac{\psi(p_n)}{\psi'(p_n)}$$ 
	with $p_0$ given by 
	\begin{equation}\label{NR-II:p0}
		p_0= 
		\begin{cases}
			0, \quad &\text{if }~D\geq\frac{E^2-| \bm{m} |^2}{E}, \\
			p_c^u, \quad & \text{otherwise}, 
		\end{cases}
	\end{equation}
	where the expression of $p_c^u$ is given in \eqref{pcu} and the expression of $\psi(p)$ is given in \eqref{eq:express of psi}.
\end{Algorithm}


\begin{Theorem}\label{thm2} 
	Algorithm \ref{expression:Robust NR} is always PCP and convergent. Furthermore, it has a quadratic convergence. 
\end{Theorem}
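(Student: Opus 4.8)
The plan is to prove the three assertions—PCP, convergence, and quadratic order—by reducing them to the two preparatory theorems already in hand, via a case split that exactly mirrors the definition \eqref{NR-II:p0} of the initial guess $p_0$.

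First I would dispose of the case $D\geq \frac{E^2-|\bm{m}|^2}{E}$. Here \eqref{NR-II:p0} sets $p_0=0$, and Theorem \ref{prop:0ini robust} states precisely that the NR iteration for $\psi(p)=0$ started from $p_0=0$ is always convergent and PCP. Thus both the PCP property and convergence to $p(\bm U)$ hold immediately in this branch, with nothing further to verify. Next I would treat the complementary case $D<\frac{E^2-|\bm{m}|^2}{E}$, where $p_0=p_c^u$. Before invoking anything, I would observe that $p_c^u$ is a well-defined positive number: Lemma \ref{lemma:upbound} gives $0<p(\bm U)<p_c^u$, so $p_0=p_c^u\geq 0$ is an admissible initial guess. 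Since the hypothesis of Theorem \ref{thm:robust NR convergent} is exactly $D<\frac{E^2-|\bm{m}|^2}{E}$, that theorem guarantees the NR iteration from any $p_0\geq 0$—in particular from $p_c^u$—is PCP and convergent. Combining the two branches establishes the PCP property and convergence of Algorithm \ref{expression:Robust NR} in all cases.

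For the quadratic order I would appeal to the standard local theory of Newton's method, which yields quadratic convergence whenever the limit is a simple root of the governing equation, i.e.\ $\psi'(p(\bm U))\neq 0$. This holds automatically here: the computation preceding \eqref{eq:232} shows $\psi'(p)>0$ for all $p\geq 0$, hence $\psi'(p(\bm U))>0$ and $p(\bm U)$ is a simple root of $\psi$. Therefore the convergent sequence $\{p_n\}$ converges quadratically, completing the proof of the final assertion.

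I do not anticipate a genuine obstacle, since the analytical work—the convexity/concavity dichotomy of $\psi$ in Theorem \ref{thm:old nr convergence} and the delicate positivity estimate \eqref{eq:ddf} underlying Theorems \ref{prop:0ini robust} and \ref{thm:robust NR convergent}—is already done. The only points needing care are purely bookkeeping: confirming that the two branches of \eqref{NR-II:p0} are matched exactly to the hypotheses of the two cited theorems, and checking via Lemma \ref{lemma:upbound} that $p_c^u>0$ so that the cited theorem, which requires $p_0\geq 0$, genuinely applies.
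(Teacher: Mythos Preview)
Your proposal is correct and follows essentially the same approach as the paper: split on the two branches of \eqref{NR-II:p0}, invoke Theorem \ref{prop:0ini robust} when $p_0=0$ and Theorem \ref{thm:robust NR convergent} when $p_0=p_c^u$, then use $\psi'(p)>0$ on $[0,+\infty)$ to conclude $p(\bm U)$ is a simple root and hence the convergence is quadratic. Your additional check via Lemma \ref{lemma:upbound} that $p_c^u>0$ is a nice piece of explicit bookkeeping the paper leaves implicit.
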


\begin{proof}
	If $D\geq\frac{E^2-| \bm{m} |^2}{E}$, then $p_0=0$, so that Algorithm \ref{expression:Robust NR} is PCP and convergent, according to  Theorem \ref{prop:0ini robust}. 
	If $D<\frac{E^2-| \bm{m} |^2}{E}$, then Theorem \ref{thm:robust NR convergent} implies that Algorithm \ref{expression:Robust NR} is PCP and convergent. 
	Recalling that $\psi'(p)>0$ for all $p\in [0,+\infty)$, we know that $p(\bm U)$ is not a repeated root of $\psi(p)=0$. Therefore, the convergence rate of this NR method is quadratic.
\end{proof}


\subsection{Hybrid NR method: hybrid PCP convergent NR iteration}\label{algor:hybrid newton}

We have proposed two NR methods for recovering primitive variables. 
Algorithm \ref{expression:newnewton} is based on the NR iteration for the polynomial $\phi(p)$. When the polynomial $\phi(p)$ is ill-conditioned, small perturbations in the coefficients $c_i$ $(i=0,1,2,3)$ can lead to significant changes in the root of $\phi(p)=0$, which can result in a reduced accuracy of Algorithm \ref{expression:newnewton}. Nonetheless, since $\phi(p)$ is a polynomial, the computational cost of evaluating $\phi(p)$ and $\phi'(p)$ in each iteration of Algorithm \ref{expression:newnewton} is relatively low. Moreover, the monotonic convergence of Algorithm \ref{expression:newnewton} allows for a simpler and more effective stopping criterion, making it computationally faster (as we will show in Section \ref{sec:numerical_tests}). 
In contrast, Algorithm \ref{expression:Robust NR} requires taking a square root at each iteration to evaluate $\psi(p)$ and $\psi'(p)$, which leads to slower computation speed, but it does not suffer from the ill-conditioned issue and always provides higher accuracy. 

In this subsection, we will propose a hybrid approach that switches to Algorithm \ref{expression:Robust NR} when $\phi(p)$ is ill-conditioned and to Algorithm \ref{expression:newnewton} when $\phi(p)$ is not ill-conditioned, to obtain a NR method that achieves both fast convergence and high accuracy.

The first problem is, how to detect ill condition of polynomials \eqref{eq:primitive_poly} efficiently and conveniently. 
In fact, a polynomial might be ill-conditioned when cluster roots occur \cite{dunaway1972some}. In the following, we observe and prove that when $D$ or $(\gamma-1)$ is very small, $p_a^u$ will be very close to $p(\bm U)$, which results in the polynomial $\phi(p)$ becoming ill-conditioned.

\begin{lemma} \label{thm:detect1}
	For fixed $\gamma,\bm{m}$, and $E$, we have $\lim\limits_{D\to 0^+}|p_a^u-p({\bm U})|=0$.
\end{lemma}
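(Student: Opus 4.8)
The plan is to exploit the factored form
$\phi(p)=(\gamma-1)^2\bigl(h_3(p)^2-D^2((E+p)^2-|\bm m|^2)\bigr)$,
where $h_3(p)=|\bm m|^2+(E+p)\bigl(\tfrac{p}{\gamma-1}-E\bigr)$ is the strictly increasing quadratic whose unique positive root is $p_b^u$ (and $p_b^u$ is independent of $D$). The geometric picture is that as $D\to0^+$ the $D^2$-term vanishes and $\phi$ degenerates to $(\gamma-1)^2 h_3(p)^2$, which has a \emph{double} positive root at $p_b^u$; the two positive roots $p(\bm U)<p_b^u<p_a^u$ of $\phi$ (ordered via Lemma \ref{lemma:upbound} and Theorem \ref{thm:2negroot2posroot}) are therefore squeezed together toward $p_b^u$. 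I would make this rigorous by proving separately that $p(\bm U)\to p_b^u$ and $p_a^u\to p_b^u$, whence $|p_a^u-p(\bm U)|\to0$.

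First I would handle $p(\bm U)$. By Lemma \ref{lemma:upbound} we have $0<p(\bm U)<p_b^u$, so $p(\bm U)$ stays in the fixed bounded interval $(0,p_b^u)$. Evaluating $\phi(p(\bm U))=0$ gives $h_3(p(\bm U))^2=D^2\bigl((E+p(\bm U))^2-|\bm m|^2\bigr)$, and boundedness of $p(\bm U)$ forces the right-hand side to $0$ as $D\to0^+$, so $h_3(p(\bm U))\to0$. Since $h_3$ is continuous and strictly increasing on $[0,p_b^u]$ with $h_3(p_b^u)=0$, its inverse is continuous there, and $h_3(p(\bm U))\to0=h_3(p_b^u)$ yields $p(\bm U)\to p_b^u$.

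The main obstacle is the larger root $p_a^u$: here I first need an a priori upper bound guaranteeing it does not escape to $+\infty$. I would fix any $P>p_b^u$ (say $P=p_b^u+1$). Then $h_3(P)>0$ is a positive constant while $D^2((E+P)^2-|\bm m|^2)\to0$, so $\phi(P)>0$ for all sufficiently small $D$; together with $\phi(p_b^u)=-(\gamma-1)^2 h_2(p_b^u)<0$ (exactly as computed in the proof of Lemma \ref{lemma:2posroot}), the intermediate value theorem produces a root of $\phi$ in $(p_b^u,P)$, which must be $p_a^u$ since by Theorem \ref{thm:2negroot2posroot} it is the only positive root exceeding $p_b^u$. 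Hence $p_b^u<p_a^u<P$ for all small $D$.

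With $p_a^u$ now trapped in the fixed interval $(p_b^u,P)$, the same argument as for $p(\bm U)$ applies: $\phi(p_a^u)=0$ together with $h_3(p_a^u)>0$ gives $h_3(p_a^u)=D\sqrt{(E+p_a^u)^2-|\bm m|^2}\to0$, and continuity of $h_3^{-1}$ on $[p_b^u,P]$ forces $p_a^u\to p_b^u$. Combining the two limits gives $|p_a^u-p(\bm U)|\to|p_b^u-p_b^u|=0$, as claimed. I expect the only genuinely delicate point to be the boundedness of $p_a^u$; once that is secured, everything reduces to continuity of the elementary map $h_3^{-1}$.
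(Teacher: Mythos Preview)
Your proof is correct and follows essentially the same approach as the paper: both recognize that $\phi(p)=(\gamma-1)^2\bigl(h_3(p)^2-h_2(p)\bigr)$ degenerates to $(\gamma-1)^2 h_3(p)^2$ as $D\to0^+$, forcing the two positive roots $p(\bm U)$ and $p_a^u$ to collapse onto the double root $p_b^u$ of $h_3$. Your argument is in fact more complete than the paper's, which simply asserts this conclusion in one line; you supply the a~priori bound $p_a^u<P$ (via $\phi(P)>0$ for small $D$) and the continuity of $h_3^{-1}$ that the paper leaves implicit.
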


\begin{proof}
	Note that $h_1(p)=h_3^2(p)$ is independent of $D$, and 
	$$\lim_{D\to 0^+}h_2(p)=\lim_{D\to 0^+}D^2\left((E+p)^2-| \bm{m} |^2\right)=0.$$ 
	Since  $\phi(p)=(\gamma-1)^2\left[h_3^2(p)-h_2(p)\right]$, when $D$ approaches zero, $p_a^u$ and $p({\bm U})$ approaches $p_b^u$, which is the unique positive root of $h_3(p)$ as shown in the proof of Lemma \ref{lemma:2posroot}.
\end{proof}

In practical computation, the ratio $\frac{h_2(0)}{h_1(0)}=\frac{D^2}{E^2-|\bm{m}|^2}$ can be used to estimate whether $D$ is small enough to result in clustered roots. Note that $E>\sqrt{D^2+|\bm{m}|^2}$, which implies $\frac{D^2}{E^2-|\bm{m}|^2}<1$.

\begin{lemma} \label{thm:detect2}
	For fixed conservative quantities $D,\bm{m}$, and $E$, we have $\lim\limits_{\gamma\to 1^+}p(\bm U)=\lim_{\gamma\to 1^+}p_a^u=0$, and $\lim\limits_{\gamma\to 1^+}|p_a^u-p({\bm U})|=0$.
\end{lemma}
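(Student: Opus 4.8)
The plan is to prove the two separate limits $\lim_{\gamma\to 1^+}p(\bm U)=0$ and $\lim_{\gamma\to 1^+}p_a^u=0$; the third assertion $\lim_{\gamma\to 1^+}|p_a^u-p(\bm U)|=0$ then follows immediately from these two by the triangle inequality, so no extra work is needed for it.

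First I would dispatch $p(\bm U)\to 0$, which is the easy half. Since $p(\bm U)$ solves $\Phi(p)=0$, I would isolate the singular term and write $\frac{p(\bm U)}{\gamma-1}=E-\frac{|\bm m|^2}{E+p(\bm U)}-D\sqrt{1-\frac{|\bm m|^2}{(E+p(\bm U))^2}}$. The right-hand side is at most $E$, because both subtracted terms are nonnegative (the square root is real since $E>|\bm m|$ for $\bm U\in\mathcal G$). Hence $0<p(\bm U)\le E(\gamma-1)$, and letting $\gamma\to 1^+$ gives $p(\bm U)\to 0$.

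The main obstacle is showing $p_a^u\to 0$, since $p_a^u$ is the \emph{larger} positive root and the ordering $p(\bm U)<p_c^u<p_b^u<p_a^u$ from Lemma~\ref{lemma:upbound} only controls the quantities lying \emph{below} $p_a^u$. My preferred route is to use continuity of the roots of $\phi(p)=p^4+c_3p^3+c_2p^2+c_1p+c_0$ with respect to its coefficients. Reading off the expressions in \eqref{eq:expression of a}, as $\gamma\to 1^+$ one has $c_0\to 0$ and $c_1\to 0$ (each carries a factor $(\gamma-1)$), while $c_2\to E^2$ and $c_3\to 2E$; thus the coefficients of $\phi$ converge to those of the limit polynomial $p^2(p+E)^2$, whose only nonnegative root is $0$. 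Since the roots of a monic polynomial depend continuously on its coefficients, for every $\varepsilon>0$ all four roots of $\phi$ lie within $\varepsilon$ of $\{0,-E\}$ once $\gamma$ is close enough to $1$. The two positive roots $p(\bm U)$ and $p_a^u$ cannot lie near $-E$, so both must lie within $\varepsilon$ of $0$; this yields $p_a^u\to 0$ and simultaneously reproves $p(\bm U)\to 0$.

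As a self-contained alternative that avoids the abstract continuity statement, I would exhibit an explicit vanishing upper bound for $p_a^u$. Because $\phi(0)=c_0>0$, $\phi<0$ on $(p(\bm U),p_a^u)$, and $\phi>0$ on $(p_a^u,+\infty)$, it suffices to find $\delta=\delta(\gamma)\to 0$ with $\delta>p(\bm U)$ and $\phi(\delta)>0$, for then necessarily $\delta>p_a^u$. Taking $\delta=K(\gamma-1)$ and using $\phi=(\gamma-1)^2(h_3^2-h_2)$, a short expansion gives $\frac{\phi(K(\gamma-1))}{(\gamma-1)^2}\to\big(KE-(E^2-|\bm m|^2)\big)^2-D^2(E^2-|\bm m|^2)$ as $\gamma\to 1^+$; choosing $K$ larger than both $E$ and $\frac{\sqrt{E^2-|\bm m|^2}\,(\sqrt{E^2-|\bm m|^2}+D)}{E}$ makes this limit positive and simultaneously guarantees $\delta>E(\gamma-1)\ge p(\bm U)$, so $p_a^u<K(\gamma-1)\to 0$. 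Either route completes the proof; the only delicate point is the uniform control of the larger root $p_a^u$, which is why I would lean on the coefficient-convergence structure $\phi\to p^2(p+E)^2$ rather than on the intermediate bounds $p_b^u$ and $p_c^u$.
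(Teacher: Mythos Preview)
Your proof is correct, and both routes you outline work. The argument is genuinely different from the paper's, so a brief comparison is in order.

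The paper does not separate the two limits. It writes $\phi(p)=h_4(p)\,p+c_0+p^4$ with $h_4(p)=c_1+c_2p+c_3p^2$, shows that for $\gamma<1.2$ one has $c_2>0.2E^2$, and then bounds the positive root of $h_4$ by $-c_1/c_2<-c_1/(0.2E^2)$. Since $c_0>0$ and $p^4>0$, this gives the single explicit bound $0<p(\bm U)<p_a^u<10(\gamma-1)\dfrac{E(2-\gamma)(E^2-|\bm m|^2)+ED^2(\gamma-1)}{E^2}\to 0$, which handles both roots at once.

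Your approach instead first isolates $p(\bm U)\le E(\gamma-1)$ directly from $\Phi(p)=0$, and then controls $p_a^u$ either by the continuous dependence of roots on coefficients (noting $\phi\to p^2(p+E)^2$) or by evaluating $\phi/(\gamma-1)^2=h_3^2-h_2$ at $p=K(\gamma-1)$. The root-continuity route is conceptually cleaner and immediately explains \emph{why} the two positive roots coalesce (they both approach the double root at $0$ of the limit polynomial). The paper's route, by contrast, produces a fully explicit bound valid on the whole range $\gamma<1.2$, not just asymptotically; this is a minor practical advantage since the lemma is used to set a threshold $\epsilon_1$ for switching algorithms. Your alternative explicit bound via $h_3^2-h_2$ recovers this constructive flavor and is arguably more transparent than the paper's $h_4$ decomposition, since it works with the structural pieces $h_1,h_2$ already used elsewhere.
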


\begin{proof}
	Consider $h_4(p):=c_1+c_2p+c_3p^2$, where $c_1<0$ and $c_3>0$ when $\gamma <2$ according to Lemma \ref{lemma:signofa}. 
	If $\gamma <2$, then $h_4(p)>0$ when $p>\frac{-c_2+\sqrt{c_2^2-4c_1c_3}}{2c_3}$. 
	When $\gamma<1.2<3-\sqrt{3}$, we have 
	$$
	c_2=E^2(\gamma^2-6\gamma+6)+2|\bm{m}|^2(\gamma-1)-D^2(\gamma-1)^2>0.24E^2-0.04D^2>0.2E^2>0,
	$$
	which yields 
	$$\frac{-c_2+\sqrt{c_2^2-4c_1c_3}}{2c_3}=\frac{-2c_1}{c_2+\sqrt{c_2^2-4c_1c_3}}<-\frac{c_1}{c_2}<-\frac{c_1}{0.2E^2}.$$
	Thus, if $\gamma<1.2$ and $p>-\frac{c_1}{0.2E^2}$, then $h_4(p)>0$ and 
	$\phi(p)=h_4(p)p+c_0+p^4>0$, where we have used $c_0>0$ proved in Lemma \ref{lemma:signofa}. 
	Therefore, 
	$$0<p(\bm U)<p_a^u<-\frac{c_1}{0.2E^2}=10(\gamma-1)\frac{E(2-\gamma)(E^2-| \bm{m} |^2)+ED^2(\gamma-1)}{E^2},$$ 
	which implies $\lim\limits_{\gamma\to 1^+}p(\bm U)=\lim\limits_{\gamma\to 1^+}p_a^u=0$. It follows that $\lim\limits_{\gamma\to 1^+}|p_a^u-p({\bm U})|=0$. The proof is completed.   
\end{proof}

By utilizing Lemmas \ref{thm:detect1} and \ref{thm:detect2}, we can effectively detect the ill-conditioned problem: the polynomial $\phi(p)$ may be ill-conditioned if $\gamma<1+\epsilon_1$ or $\frac{D^2}{E^2-| \bm{m} |^2}<\epsilon_2$, where $\epsilon_1\in\left(0,1\right)$ and $\epsilon_2\in\left(0,1\right)$ are two small positive numbers. 

\begin{rem} \label{rem00}
	When $p_a^u$ and $p({\bf U})$ are in close proximity, the polynomial $\phi(p)$ tends to become ill-conditioned, thereby compromising the accuracy of Algorithm \ref{expression:newnewton}. On the other hand, Algorithm \ref{expression:Robust NR} demonstrates high accuracy and rapid convergence when $p_a^u$ and $p({\bf U})$ are in close proximity. This is due to the fact that $p_c^u$ is positioned between $p(U)$ and $p_a^u$, resulting in the  initial value $p_0$ for Algorithm \ref{expression:Robust NR} that is often very close to the actual pressure $p(\bm U)$ in such cases. In other words, Algorithm \ref{expression:Robust NR} exactly compensates for the limitations of Algorithm \ref{expression:newnewton}.
\end{rem}

Based on the aforementioned discussions, we propose the following hybrid method. 

\begin{Algorithm} [Hybrid NR]\label{expression:Hybrid NR}
	\begin{equation}
		\begin{cases}
			\text{Switch to Algorithm }\ref{expression:newnewton}, \quad &\text{if } \gamma\geq1+\epsilon_1 \text{ and } \frac{D^2}{E^2-| \bm{m} |^2}\geq\epsilon_2.\\
			\text{Switch to Algorithm }\ref{expression:Robust NR},\quad & \text{otherwise},
		\end{cases}
	\end{equation}
	where $\epsilon_1\in(0,1)$ and $\epsilon_2\in(0,1)$ are two small positive numbers. In this paper, we set $\epsilon_1=0.01$ and $\epsilon_2=10^{-4}$.
\end{Algorithm}

\begin{rem}
	Since both the NR-I and NR-II methods are always PCP and convergent with quadratic convergence rate,  the above hybrid NR method is also PCP and convergent with quadratic convergence rate. 
\end{rem}


If $\gamma\geq1+\epsilon_1$ and $\frac{D^2}{E^2-| \bm{m} |^2}\geq\epsilon_2$, Algorithm \ref{expression:Hybrid NR} utilizes Algorithm \ref{expression:newnewton}, which exhibits fast convergence and high accuracy with low computational cost since the polynomial $\phi(x)$ is not ill-conditioned in this scenario.
In contrast, when $\gamma < 1+\epsilon_1$ or $\frac{D^2}{E^2-| \bm{m} |^2} < \epsilon_2$, Algorithm \ref{expression:Hybrid NR} switches to Algorithm \ref{expression:Robust NR}, and in this case, the initial value $p_0$ of Algorithm \ref{expression:Robust NR} (defined in \eqref{NR-II:p0}) is typically in close proximity to the physical pressure $p(\bm U)$ (as discussed in Remark \ref{rem00}), thus ensuring both fast convergence and high accuracy.
Overall, the hybrid NR method achieves both high accuracy and fast convergence, as confirmed by numerical tests in Section \ref{sec:numerical_tests}.

\section{One-dimensional PCP HWENO scheme}\label{sec:1D HWENO scheme}

In this section, we present the PCP finite volume HWENO scheme for the 1D special RHD equations
\begin{equation}\label{eq:rhd_1D}
	\frac{\partial \bm{U}}{\partial t}+\frac{\partial \bm{F}\left(\bm{U}\right)}{\partial x}={\bf 0},
\end{equation}
where 
\begin{equation*}
	\bm{U}=\left(D,m_1,E\right)^\top,\quad \bm{F}=\left(Dv_i,m_1v_1+p,m_1\right)^\top.
\end{equation*}
Divide the computational domain into $N$ uniform cells $I_i=\left [ x_{i-1/2},x_{i+1/2}\right]$, $1\leq i\leq N$, with the cell center $x_i=\frac{1}{2} (  x_{i-1/2}+x_{i+1/2} )$. Let $\Delta x$ denote the mesh size $x_{i+1/2}-x_{i-1/2}$.

\subsection{1D PCP  finite volume HWENO scheme}\label{section:1D PCP HWENO}

In this subsection, we give the 1D finite volume PCP HWENO scheme. For the notational convenience, we denote  $x_{i+a}=x_i+a\Delta x$, where $a \in [-\frac12,\frac12]$ is a real number.

The semi-discrete finite volume HWENO scheme for the RHD equations (\ref{eq:rhd_1D}) is given by 
\begin{align}\label{eq:1D discrete equation1}
	\frac{\mathrm{d} \bm{\overline{U}}_i(t)}{\mathrm{d} t} &=-\frac{\hat{\bm{F}}_{i+1/2}-\hat{\bm{F}}_{i-1/2}}{\Delta x}=:\mathcal{L}_U(\bm{U}_h(t),i),
	\\
	\label{eq:1D discrete equation2}
	\frac{\mathrm{d} \bm{\overline{V}}_i(t)}{\mathrm{d} t} &=-\frac{\hat{\bm{F}}_{i+1/2}+\hat{\bm{F}}_{i-1/2}}{2\Delta x}+\frac{\bm{H}_i}{\Delta x}=:\mathcal{L}_V(\bm{U}_h(t),i),
\end{align}
where 
$$
\bm{\overline{U}}_i\left(t\right) \!\approx\! \frac{1}{\Delta x}\int_{I_i}\bm{U}(x,t)\mathrm{d}x, 
\quad  
\bm{\overline{V}}_i\left(t\right) \!\approx\! \frac{1}{\Delta x}\int_{I_i}\bm{U}(x,t)\frac{x-x_i}{\Delta x}\mathrm{d}x
$$
denote the approximations to the zeroth and first order moments in $I_i$, respectively,
$$
\bm{U}_h(t)\!=\!\{\bm{\overline{U}}_i\left(t\right)\}_{i=1}^N\cup \{\bm{\overline{V}}_i\left(t\right)\}_{i=1}^N
$$
denotes the set that contains all cells’ zeroth and first order moments, and  $$\bm{H}_i=\sum\limits_{l=1}^{4}\hat{\omega}_\ell\bm{F}({\bm U}_{i\oplus a_\ell}) \approx \frac{1}{\Delta x}\int_{I_i}\bm{F}\left(\bm{U}\right)\mathrm{d}x$$ 
with $\bm{U}_{i\oplus a_\ell}$ denoting the approximation to $\bm{U}(x_{i+a_\ell},t)$ within the cell $I_i$. Here the four-point Gauss--Lobatto quadrature is used with the quadrature weights and nodes given by 
\begin{align*}
	&\hat{\omega}_1=\hat{\omega}_4=\frac{1}{12}, \qquad \hat{\omega}_2=\hat{\omega}_3=\frac{5}{12},
	\\
	&x_{i+a_\ell}=x_i+a_{\ell}\Delta x, \qquad \{ a_{\ell}\}_{\ell=1}^4=\left\{ -\frac{1}{2},~ -\frac{\sqrt{5}}{10},~ \frac{\sqrt{5}}{10},~ \frac{1}{2} \right\}.
\end{align*} 
In \eqref{eq:1D discrete equation1}--\eqref{eq:1D discrete equation2}, 
$\hat{\bm{F}}_{i+1/2}$ denotes the numerical flux at the cell interface point $x_{i+1/2}$. 
In this paper, we employ the Lax--Friedrichs numerical flux\footnote{Another option is  the Harten-Lax-van Leer (HLL) numerical flux, whose PCP property was proved in \cite{chen2022physical}.} 
\begin{equation}\label{1D LF}
	\hat{\bm{F}}_{i+1/2}=\frac{1}{2}\left(\bm{F}_1\left(\bm{ U}_{i\oplus\frac12}\right)+\bm{F}_1\left(\bm{ U}_{(i+1)\oplus\left(-\frac12\right)}\right)-\alpha\left(\bm{ U}_{(i+1)\oplus\left(-\frac12\right)}-\bm{U}_{i\oplus\frac12}\right)\right),
\end{equation}
which will be useful for achieving the PCP property of our HWENO scheme. Here $\alpha$ is defined by 
\begin{equation*}
	\alpha = \max_{i} \max\left\{\varrho_1(\bm{ U}_{i\oplus\frac12}),~\varrho_1(\bm{U}_{(i+1)\oplus\left(-\frac12\right)})\right\},
\end{equation*}
 where
$\varrho_1(\bm{U})$ denotes the spectral radius of the Jacobian matrix $\frac{\partial \bm F_1(\bm U)}{\partial \bm U}$. 

\begin{rem}
It should be pointed out that the symbol ``$\oplus$" in the subscript of $\bm{U}_{i\oplus a}$ with $a \in [-\frac12,\frac12]$ is not a standard addition operation, but rather a symbol representing the position relative to the cell center $x_i$. For example, $\bm{U}_{i \oplus \frac12}$ represents the approximate value at the point $x_i+\frac{1}{2}\Delta x$ computed within the cell $I_i$, while $\bm{U}_{(i+1) \oplus \left(-\frac12\right)}$ stands for the approximate value at the same point $x_{i+1}-\frac{1}{2}\Delta x$ but computed within the cell $I_{i+1}$.
\end{rem}

To compute $\mathcal{L}_U(\bm{U}_h(t),i)$ and $\mathcal{L}_V(\bm{U}_h(t),i)$ in \eqref{eq:1D discrete equation1}--\eqref{eq:1D discrete equation2}, one needs to reconstruct the point values $\{\bm{U}_{i\oplus a_\ell}\}_{\ell=1}^4$ by using $\{\bm{\overline{U}}_i\}$ and $\{\bm{\overline{V}}_i\}$. 
To facilitate the subsequent description, we first introduce two reconstruction operators $\bm{M}_L(\cdot,\cdot,\cdot)$ and $\bm{M}_H(\cdot,\cdot,\cdot)$, before giving the detailed spatial reconstruction procedures of our 1D PCP finite volume HWENO scheme.

\subsubsection{Linear reconstruction operator $\bm{M}_L$}

Let us reconstruct a quintic polynomial $P_0(x)=\sum\limits_{l=0}^{5}a^0_l\left(\frac{x-x_i}{\Delta x}\right)^l$ satisfying
\begin{align} \label{eq:312}
	&\frac{1}{\Delta x}\int_{I_k}P_0\left(x\right)\mathrm{d}x=u_k,~k=i,i\pm1, 
	\\ \label{eq:313}
	&\frac{1}{\Delta x}\int_{I_k}P_0\left(x\right)\frac{x-x_i}{\Delta x}\mathrm{d}x=v_k,~k=i,i\pm1,
\end{align}
where $u_k$ and $v_k$ are given real numbers. The six equations \eqref{eq:312}--\eqref{eq:313} 
form a linear algebraic system for the unknowns $\{a^0_l\}_{l=0}^5$. Solving the system gives 
the expressions of $a_l^0$, which are the linear combinations of $u_i$, $v_i$, $u_{i\pm1}$, $v_{i\pm1}$ given by 
\begin{eqnarray*}\label{1Dp0coeff}
	\left\{
	\begin{aligned}[c]
		&a_0^0=-\frac{43}{384}u_{i-1} + \frac{235}{192}u_i -\frac{43}{384}u_{i+1} - \frac{27}{64}v_{i-1} + \frac{27}{64}v_{i+1}, \\
		&a_1^0=\frac{167}{576}u_{i-1}-\frac{167}{576}u_{i+1}+\frac{281}{288}v_{i-1}+\frac{2449}{144}v_i+\frac{281}{288}v_{i+1},\\
		&a_2^0=\frac{23}{16}u_{i-1}-\frac{23}{8}u_i+\frac{23}{16}u_{i+1}+\frac{45}{8}v_{i-1}-\frac{45}{8}v_{i+1},\\
		&a_3^0=-\frac{455}{216}u_{i-1}+\frac{455}{216}u_{i+1}-\frac{785}{108}v_{i-1}-\frac{1945}{54}v_i-\frac{785}{108}v_{i+1},\\
		&a_4^0=-\frac{5}{8}u_{i-1}+\frac{4}{5}u_i-\frac{5}{8}u_{i+1}-\frac{15}{4}v_{i-1}+\frac{15}{4}v_{i+1},\\
		&a_5^0=\frac{35}{36}u_{i-1}-\frac{35}{36}u_{i+1}+\frac{77}{18}v_{i-1}+\frac{133}{9}v_i+\frac{77}{18}v_{i+1}.
	\end{aligned}
	\right.
\end{eqnarray*}
Define $\eta=\frac{x-x_i}{\Delta x}$ and the operator 
\begin{equation*}
	M_L([u_{i-1} ~u_i ~u_{i+1}],[v_{i-1} ~v_i ~v_{i+1}],\eta):=P_0(x(\eta))=\sum_{l=0}^{5}a^0_l\eta^l,
\end{equation*}
which is a mapping from $\mathbb{R}^{1\times3}\times\mathbb{R}^{1\times3}\times\mathbb{R}$ to $\mathbb{R}$. Using this operator, it is easy to compute the value of $P_0(x)=$ at $x_{i+\eta}$ with $P_0(x_{i+\eta})=M_L([u_{i-1} ~u_i ~u_{i+1}],[v_{i-1} ~v_i ~v_{i+1}],\eta)$. For example, 
$$P_0(x_{i+1/2})=M_L([u_{i-1} ~u_i ~u_{i+1}],[v_{i-1} ~v_i ~v_{i+1}],\frac12)=\frac{13}{108}u_{i-1}+\frac{7}{12}u_i+\frac{8}{27}u_{i+1}+\frac{25}{54}v_{i-1}+\frac{241}{54}v_i-\frac{28}{27}v_{i+1},$$ 
which is independent of the cell size $\Delta x$ and the cell center $x_i$.

The operator $M_L$ represents the reconstruction mapping for the scalar equation. In order to extend the reconstruction to the 1D RHD equations, we generalize the operator to vector cases component-wisely as follows
\begin{equation*}
	\bm{M}_L([\bm{U}_1~\bm{U}_2~\bm{U}_3],[\bm{V}_1~\bm{V}_2~\bm{V}_3],\eta):=
	\begin{pmatrix}
		M_L([{U}_1^{(1)}~{U}_2^{(1)}~{U}_3^{(1)}],[{V}_1^{(1)}~{V}_2^{(1)}~{V}_3^{(1)}],\eta) \\
		M_L([{U}_1^{(2)}~{U}_2^{(2)}~{U}_3^{(2)}],[{V}_1^{(2)}~{V}_2^{(2)}~{V}_3^{(2)}],\eta) \\
		M_L([{U}_1^{(3)}~{U}_2^{(3)}~{U}_3^{(3)}],[{V}_1^{(3)}~{V}_2^{(3)}~{V}_3^{(3)}],\eta)
	\end{pmatrix},
\end{equation*}
where $\bm{M}_L$ is the reconstruction operator from $\mathbb{R}^{3\times3}\times\mathbb{R}^{3\times3}\times\mathbb{R}$ to $\mathbb{R}^{3\times1}$. It is worth pointing out that $\bm{M}_L(\cdot,\cdot,\eta)$ is also a linear mapping for a fixed $\eta$.

\subsubsection{HWENO reconstruction operator $\bm{M}_H$}

Consider two quadratic polynomials $P_1(x)=\sum\limits_{l=0}^{2}a^1_l\left(\frac{x-x_i}{\Delta x}\right)^l$ and $P_2(x)=\sum\limits_{l=0}^{2}a^2_l\left(\frac{x-x_i}{\Delta x}\right)^l$ satisfying
\begin{align}
	&\frac{1}{\Delta x}\int_{I_i}P_1\left(x\right)\frac{x-x_i}{\Delta x}\mathrm{d}x=v_i,\quad \frac{1}{\Delta x}\int_{I_k}P_1\left(x\right)\mathrm{d}x=u_k,\quad k=i,i-1,\nonumber\\ 
	&\frac{1}{\Delta x}\int_{I_i}P_2\left(x\right)\frac{x-x_i}{\Delta x}\mathrm{d}x=v_i,\quad \frac{1}{\Delta x}\int_{I_k}P_2\left(x\right)\mathrm{d}x=u_k,\quad k=i,i+1.\nonumber
\end{align}
Similarly, we can obtain the expressions of $a_l^1$ and $a_l^2$, which are also linear combinations of $u_i$, $v_i$, $u_{i\pm1}$, $v_{i\pm1}$, given by 
\begin{eqnarray*}\label{1Dp12coeff}
	\left\{
	\begin{aligned}[c]
		&a_0^1=-\frac{1}{12}u_{i-1}+\frac{13}{12}u_i-v_i, \\
		&a_1^1=12v_i,\\
		&a_2^1=u_{i-1}-u_i+12v_i,\\
		&a_0^2=\frac{13}{12}u_i-\frac{1}{12}u_{i+1}+v_i,\\
		&a_1^2=12v_i,\\
		&a_2^2=-u_i+u_{i+1}-12v_i.
	\end{aligned}
	\right.
\end{eqnarray*}

Next, in order to measure the smoothness of the polynomial $P_n\left ( x\right )$ in the cell $I_i$, we calculate the smooth indicators, with the same definition as in \cite{zhao2020hermite},
\begin{equation}\label{smooth indicator}
	\beta_n=\sum_{\alpha=1}^{r}\int_{I_i}\Delta x^{2\alpha-1}\left ({\frac{\mathrm{d}^\alpha P_n\left ( x\right )}{\mathrm{d}x^{\alpha}}} \right )^2{\mathrm{d}x},~n=0,1,2,
\end{equation}
\noindent where $r$ is the degree of the polynomials $P_n(x)$. The expressions of $\beta_n$ are
\begin{eqnarray*}\label{smooth indicator_express}
	\left\{  
 \begin{aligned}
	\beta_0=&\left(\frac{19}{108} {u}_{i-1}-\frac{19}{108} {u}_{i+1}+\frac{31}{54} {v}_{i-1}-\frac{241}{27} {v}_i+\frac{31}{54} {v}_{i+1}\right)^2+\left(\frac{9}{4} {u}_{i-1}-\frac{9}{2} {u}_i+\frac{9}{4} {u}_{i+1}+\right. \\
	&\left.\frac{15}{2} {v}_{i-1}-\frac{15}{2} {v}_{i+1}\right)^2+\left(\frac{70}{9} {u}_{i-1}-\frac{70}{9} {u}_{i+1}+\frac{200}{9} {v}_{i-1}+\frac{1280}{9} {v}_i+\frac{200}{9} {v}_{i+1}\right)^2+ \\
	&\frac{1}{12}\left(\frac{5}{2} {u}_{i-1}-5 {u}_i+\frac{5}{2} {u}_{i+1}+9 {v}_{i-1}-9 {v}_{i+1}\right)^2+\frac{1}{12}\left(\frac{175}{18} {u}_{i-1}-\frac{175}{18} {u}_{i+1}+\frac{277}{9} {v}_{i-1}+\right. \\
	&\left.\frac{1546}{9} {v}_i+\frac{277}{9} {v}_{i+1}\right)^2+\frac{1}{180}\left(\frac{95}{18} {u}_{i-1}-\frac{95}{18} {u}_{i+1}+\frac{155}{9} {v}_{i-1}+\frac{830}{9} {v}_i+\frac{155}{9} {v}_{i+1}\right)^2+ \\
	&\frac{109341}{175}\left(\frac{5}{8} {u}_{i-1}-\frac{5}{4} {u}_i+\frac{5}{8} {u}_{i+1}+\frac{15}{4} {v}_{i-1}-\frac{15}{4} {v}_{i+1}\right)^2+\frac{27553933}{1764}\left(\frac{35}{36} {u}_{i-1}-\frac{35}{36} {u}_{i+1}+\right. \\
	&\left.\frac{77}{18} {v}_{i-1}+\frac{133}{9} {v}_i+\frac{77}{18} {v}_{i+1}\right)^2,\\
	\beta_1=&144v_i^2+\frac{13}{3}(u_{i-1}-u_i+12v_i)^2,\\
	\beta_2=&144v_i^2+\frac{13}{3}(u_i-u_{i+1}+12v_i)^2.
\end{aligned}
\right.
\end{eqnarray*}
\noindent Then the HWENO reconstruction polynomial is defined by
\begin{equation}
	P_{H}\left(x\right)=\omega_0 \left(\frac{1}{\gamma_0}P_0\left(x\right)-\sum_{n=1}^2\frac{\gamma_n}{\gamma_0}P_n\left(x\right)\right)+\sum_{n=1}^2\omega_nP_n\left(x\right), \nonumber
\end{equation}

\noindent where the nonlinear weights
\begin{equation}\label{nonlinear weights 2}
	\omega_n=\frac{\bar{\omega}_n}{\sum_{k=0}^2\bar{\omega}_k} \quad  \text{ with } \quad  \bar{\omega}_n=\gamma_n\left(1+\frac{\tau^2}{\beta_n^2+\epsilon}\right),~ n=0,1,2,
\end{equation}
$\tau:=\frac{\left |\beta_0-\beta_1 \right |+\left|\beta_0-\beta_2 \right |}{2}$, and $\epsilon$ is a small positive number to avoid the denominator being zero. These nonlinear weights possess a ``scale-invariant'' property, which means that the nonlinear weights $\{\omega_n\}$ remain unchanged when $\{u_i, v_i, u_{i\pm1}, v_{i\pm1}\}$ are replaced by $\{\lambda u_i, \lambda v_i, \lambda u_{i\pm1}, \lambda v_{i\pm1}\} $ for any $\lambda \neq 0$. 

Let $\eta:=\frac{x-x_i}{\Delta x}$. Define the operator 
\begin{equation*}
	\begin{aligned}[c]
		M_H([u_{i-1} ~u_i ~u_{i+1}],[v_{i-1} ~v_i ~v_{i+1}],\eta)&:=P_H(x(\eta))\\
		&=\omega_0 \left(\frac{1}{\gamma_0}\sum\limits_{l=0}^{5}a^0_l\eta^l-\sum_{n=1}^2\frac{\gamma_n}{\gamma_0}\sum\limits_{l=0}^{5}a^n_l\eta^l\right)+\sum_{n=1}^2\omega_n\sum\limits_{l=0}^{5}a^n_l\eta^l,
	\end{aligned}
\end{equation*}
which is a mapping from $\mathbb{R}^{1\times3}\times\mathbb{R}^{1\times3}\times\mathbb{R}$ to $\mathbb{R}$. It is easy to compute the value of $P_H(x)$ at $x_{i+\eta}$ with $P_H(x_{i+\eta})=M_H([u_{i-1} ~u_i ~u_{i+1}],[v_{i-1} ~v_i ~v_{i+1}],\eta)$.
We can generalize the scalar HWENO reconstruction operator $M_H$ to the vector cases in a component by component manner: 
\begin{equation*}
	\bm{M}_H([\bm{U}_1~\bm{U}_2~\bm{U}_3],[\bm{V}_1~\bm{V}_2~\bm{V}_3],\eta):=
	\begin{pmatrix}
		M_H([{U}_1^{(1)}~{U}_2^{(1)}~{U}_3^{(1)}],[{V}_1^{(1)}~{V}_2^{(1)}~{V}_3^{(1)}],\eta) \\
		M_H([{U}_1^{(2)}~{U}_2^{(2)}~{U}_3^{(2)}],[{V}_1^{(2)}~{V}_2^{(2)}~{V}_3^{(2)}],\eta) \\
		M_H([{U}_1^{(3)}~{U}_2^{(3)}~{U}_3^{(3)}],[{V}_1^{(3)}~{V}_2^{(3)}~{V}_3^{(3)}],\eta)
	\end{pmatrix},
\end{equation*}
where ${U}_i^{(\ell)}$ is the $\ell$th component of ${\bm U}_i$, ${V}_i^{(\ell)}$ is the $\ell$th component of ${\bm V}_i$. 
Different from $\bm{M}_L$, the operator  
$\bm{M}_H(\cdot,\cdot,\eta)$ is a nonlinear mapping for a fixed $\eta$.

\begin{rem}
When solving the relativistic hydrodynamics (RHD) equations, the wide range of variable scales arising from relativistic effects in the ultra-relativistic regime can significantly impact the effectiveness of shock capturing. As recently demonstrated in \cite{chen2022physical}, using a scale-invariant nonlinear weights can effectively suppress oscillations for simulating multiscale RHD problems.
\end{rem}


\subsubsection{Detailed PCP HWENO reconstruction procedure}\label{sec:detailed}

We are now in a position to present the detailed PCP HWENO reconstruction procedure of our 1D HWENO scheme. 

\begin{description}
	\item[Step 1.]
	Use the KXRCF indicator \cite{krivodonova2004shock} to identify the troubled cells, which are the cells where the solution may be discontinuous. Then modify the first-order moment in the troubled cells by using the HWENO limiter given in \cite{zhao2020hermite}. We observe that the nonlinear weights in the HWENO limiter are also necessary to be scale-invariant, thus we modify the nonlinear weights in the HWENO limiter \cite{zhao2020hermite} to  $\omega_n^l=\frac{\bar{\omega}_n^l}{\sum_{k=0}^2\bar{\omega}_k^l}$ with  $\bar{\omega}_n^l=\gamma_n\left(1+\frac{\tau_l^2\Delta x}{\left(\beta_n^l\right)^2+\epsilon}\right), n=0,1,2$.
	
	\item[Step 2.]
	Reconstruct the point values of the solution at the four Gauss--Lobatto points.
	
	\begin{itemize}
		\item If cell $I_i$ is not a troubled cell, employ the linear reconstruction:
		$$\bm{U}_{i\oplus a_\ell}^{*}=\bm{M}_L\left(\left[\overline{\bm{U}}_{i-1} ~ \overline{\bm{U}}_{i} ~ \overline{\bm{U}}_{i+1} \right],\left[\overline{\bm{V}}_{i-1} ~ \overline{\bm{V}}_{i} ~ \overline{\bm{V}}_{i+1} \right],a_{\ell}\right), \quad \ell\in\left \{1,2,3,4\right \}.$$
		\item If $I_i$ is a troubled cell, use the HWENO reconstruction.  
		\\
		(i) Perform HWENO reconstruction in a component-by-component fashion for the second and third Gauss-Lobatto points:
		$$\bm{U}_{i\oplus a_\ell}^{*}=\bm{M}_H\left(\left[\overline{\bm{U}}_{i-1} ~ \overline{\bm{U}}_{i} ~ \overline{\bm{U}}_{i+1} \right],\left[\overline{\bm{V}}_{i-1} ~ \overline{\bm{V}}_{i} ~ \overline{\bm{V}}_{i+1} \right],a_{\ell}\right),\quad \ell\in\left\{2,3\right\}.$$
		(ii) Perform HWENO reconstruction based on characteristic decomposition for the cell interface points:
		$$\bm{U}_{i\oplus \left(\pm\frac12\right)}^{*}=\bm{R}_{i\pm\frac{1}{2}}\bm{M}_H\left(\bm{R}^{-1}_{i\pm\frac{1}{2}}\left[\overline{\bm{U}}_{i-1} ~ \overline{\bm{U}}_{i} ~ \overline{\bm{U}}_{i+1} \right],\bm{R}^{-1}_{i\pm\frac{1}{2}}\left[\overline{\bm{V}}_{i-1} ~ \overline{\bm{V}}_{i} ~ \overline{\bm{V}}_{i+1} \right],\pm\frac{1}{2}\right),$$
		where $\bm{R}^{-1}_{i+1/2}$ and $\bm{R}_{i+1/2}$ are taken as left and right eigenvector matrices of  the Roe matrix \cite{eulderink1994general} at $x_{i+1/2}$. The eigenvector matrices and fluxes are computed by using the primitive variables, which are recovered by using the proposed NR methods. 
		
	\end{itemize}
	\item[Step 3.] Perform the PCP limiter on the reconstructed point values $\{\bm{U}_{i\oplus a_\ell}^{*} \}_{\ell =1}^4$ as follows. 
	Define $\bm{U}_{i\oplus a_\ell}^*=: (D^*_{i\oplus a_\ell}, (m_1)^*_{i\oplus a_\ell}, E^*_{i\oplus a_\ell})^\top$ and the first component of $\overline {\bm U}_i$ as $\overline D_i$.  
	\begin{itemize}
		\item  Modify the mass density to enforce its positivity via 
		\begin{align*}
			&\widetilde{D}_{i\oplus a_\ell}=\theta_D(D^*_{i\oplus a_\ell}-\overline{D}_i)+\overline{D}_i \quad \mbox{with} \quad \theta_D=\min\left \{\left |\frac{\overline{D}_i-\epsilon_D}{\overline{D}_i-D_{\min}} \right |,1\right \},
			\\
			&D_{\min}=\min\left\{\min_\ell \{D^*_{i\oplus a_\ell}\},\frac{\overline{D}_i-\hat{\omega}_1\bm{U}_{i\oplus \left(-\frac12\right)\!}^{*}-\hat{\omega}_4\bm{U}_{i\oplus \frac{1}{2}}^{*}}{1-2\hat{\omega}_1}\right\},
		\end{align*}
	  where $\epsilon_D=\min\limits_{i}\left \{10^{-13},\overline{D}_{i}\right \}$ is a small positive number introduced to avoid the effect of round-off errors. 
	  \item Define $\widetilde{\bm{U}}_{i\oplus a_\ell}=(\widetilde{D}_{i\oplus a_\ell}, (m_1)^*_{i\oplus a_\ell}, E^*_{i\oplus a_\ell})^\top$. 
	  Enforce the positivity of $g({\bm U})$ by 
	  \begin{align} \label{eq:PCPlimiter}
	  	&  \bm{U}_{i\oplus a_\ell}=\theta_g(\widetilde{\bm{U}}_{i\oplus a_\ell}-\overline{\bm{U}}_i)+\overline{\bm{U}}_i \quad \mbox{with} \quad \theta_g=\min\left \{\left |\frac{g(\overline{\bm{U}}_i)-\epsilon_g}{g(\overline{\bm{U}}_i)-g_{\min}} \right |,1\right \},
	  	\\ \nonumber
	  	& g_{\min} = \min\left\{ \min_\ell g(\widetilde {\bm U}_{i\oplus a_\ell}^{*}) ,g\left (\frac{\overline{\bm{U}}_i-\hat{\omega}_1\widetilde {\bm U}_{i\oplus \left(-\frac12\right)}^{*}-\hat{\omega}_4\widetilde {\bm U}_{i\oplus \frac{1}{2}}^{*}}{1-2\hat{\omega}_1} \right )\right\},
	  \end{align}
  	where $\epsilon_g=\min\limits_{i}\left \{10^{-13},g(\overline{\bm{U}}_i)\right \}$.
	\end{itemize}
\end{description}

\begin{rem}\label{rem:eigvector}
	The characteristic decomposition is very important for 
	 the HWENO reconstruction  for the cell interface points. 
	However, the wide range of characteristic variable scales resulting from relativistic effects can lead to large numerical errors in characteristic decomposition, especially in challenging test problems, where this problem is particularly severe in the HWENO scheme. To mitigate this issue, we propose to rescale the characteristic vectors. 
	Let $(\bm{r},\bm{l})$ be a pair of right and left eigenvectors of the Jacobian matrix $\frac{\partial {\bm F}}{\partial {\bm U}}$, then $(c\bm{r},\bm{l}/c)$ is also a pair of eigenvectors, where $c\neq0$.  
	We consider the following two rescaling approaches:
	\begin{enumerate}[\bfseries (i)]
		\item (Unitization) Unitize the left eigenvectors with $c=\left |\bm{l} \right |$. However, this method may result in extremely large values of $\left |c\bm{r}\right |$. 
		\item (Matching) Matching the norms between $c\bm{r}$ and $\bm{l}/c$ with $c=\sqrt{\frac{\left |\bm{l}\right|}{\left |\bm{r}\right|}}$, such that $\left |\bm{l}\right|/c=c\left |\bm{r}\right|$.
	\end{enumerate}
	In Section \ref{sec:numerical_tests}, we will present a numerical example to demonstrate the necessity of rescaling eigenvectors and compare the effectiveness of these two approaches. The results show that the ``matching'' approach is superior to the ``unitization'' approach.
\end{rem}

\begin{rem}
	As demonstrated in Section \ref{sec:Pressure recovering algorithms}, the algorithms recovering primitive variables from $\bm{U}$ are convergent only when $\bm{U}\in \mathcal{G}$. Therefore, for the RHD equations, the pointwise PCP property at a given point is necessary whenever the fluxes are computed at that point. This differs from the Euler equations \cite{zhang2010positivity}. For this reason, we employ the PCP limiting procedures for the reconstructed values at the inner Gauss--Lobatto points $\bm{U}_{i+a_\ell}$ $(\ell=2,3)$ in Step 3. 
\end{rem}

\subsubsection{Time discretization}\label{section:RK}
To obtain a fully discrete scheme, 
we use the strong-stability-preserving (SSP) Runge--Kutta methods to further discretize the semi-discrete scheme (\ref{eq:1D discrete equation1})--(\ref{eq:1D discrete equation2}) in time. For example, the third-order SSP Runge--Kutta method reads 
\begin{equation}\label{eq:SSPRK3}
	\begin{aligned}
		\bm{\overline{W}}^{(1)}_i &=\bm{\overline{W}}_i^n+\Delta t \mathcal{L}_W(\bm{U}_h^n,i) ,\\
		\bm{\overline{W}}^{(2)}_i &=\frac{3}{4}\bm{\overline{W}}^n_i+\frac{1}{4}\left(\bm{\overline{W}}^{(1)}_i+\Delta t\mathcal{L}_W(\bm{U}_h^{(1)},i)\right) ,\\
		\bm{\overline{W}}^{n+1}_i &=\frac{1}{3}\bm{\overline{W}}^n_i+\frac{2}{3}\left(\bm{\overline{W}}^{(2)}_i+\Delta t\mathcal{L}_W(\bm{U}_h^{(2)},i)\right),
	\end{aligned}
\end{equation}
where the symbol ``$\bm W$'' can be replaced with ``$\bm U$'' or ``$\bm V$''.

\subsection{Rigorous analysis of PCP property}\label{1D PCP limiter}

In this subsection, we present a rigorous analysis of PCP property of the proposed HWENO scheme.

First, we recall the following Lax--Friedrichs splitting property, which was proved for the RHD equations in \cite{2015High}. 

\begin{lemma}\label{lemma:LF pcp}
	If $\bm{U} \in{\mathcal{G}}$, then 
	$$\bm{F}^{\pm}_\ell (\bm{U},\alpha):=\bm{U}\pm\alpha^{-1}\bm{F}_\ell (\bm{U} )\in {\mathcal{G}}$$ 
	for any $\alpha\geq \varrho_\ell (\bm{U})$, $\ell=1,\dots,d.$
\end{lemma}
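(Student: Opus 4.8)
The plan is to verify the two inequalities defining $\mathcal{G}$ in \eqref{eq:G1} for the split states $\bm{W}^{\pm}:=\bm{F}^{\pm}_\ell(\bm U,\alpha)=\bm U\pm\alpha^{-1}\bm F_\ell(\bm U)$: positivity of the first component $D^\pm$, and positivity of $g(\bm W^\pm)$. From \eqref{eq:intro_fluxvecter} the first component of $\bm W^\pm$ is $D^\pm=D\left(1\pm\alpha^{-1}v_\ell\right)$. The fluid velocity $v_\ell$ is itself an eigenvalue of $\partial\bm F_\ell/\partial\bm U$ (the entropy/shear field), while the two acoustic eigenvalues strictly bracket $v_\ell$ whenever $p>0$ (so that $0<c_s<1$); hence $\varrho_\ell(\bm U)>|v_\ell|$, and $\alpha\ge\varrho_\ell(\bm U)$ gives $|\alpha^{-1}v_\ell|<1$, whence $D^\pm>0$.

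It then remains to prove $g(\bm W^\pm)>0$, which is equivalent to $E^\pm>0$ together with $(E^\pm)^2>(D^\pm)^2+|\bm m^\pm|^2$. Writing $t:=\alpha^{-1}$, I would reduce the second part to the quadratic inequality
\begin{equation*}
	f_\pm(t):=(E^\pm)^2-(D^\pm)^2-|\bm m^\pm|^2>0,\qquad t\in\left(0,\tfrac{1}{\varrho_\ell(\bm U)}\right].
\end{equation*}
Expanding $E^\pm=E\pm t m_\ell$, $D^\pm=D(1\pm t v_\ell)$, and $m_i^\pm=m_i\pm t(m_i v_\ell+p\delta_{i\ell})$, one sees that $f_\pm(t)$ is a quadratic in $t$ with $f_\pm(0)=E^2-D^2-|\bm m|^2=g(\bm U)\bigl(E+\sqrt{D^2+|\bm m|^2}\bigr)>0$. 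Since $f_\pm(0)>0$, it is enough to show that $f_\pm$ has no root in $(0,1/\varrho_\ell(\bm U)]$, i.e.\ that the smallest positive root of $f_\pm$ strictly exceeds $1/\varrho_\ell(\bm U)$.

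I expect this root-location estimate to be the main obstacle. I would carry it out by substituting the physical parametrization $D=\rho W$, $\bm m=\rho hW^2\bm v$, $E=\rho hW^2-p$, together with the ideal-EOS sound speed $c_s^2=\gamma p/(\rho h)$, into the coefficients of $f_\pm$, and then comparing the threshold speed at which $f_\pm$ vanishes against the extremal characteristic speeds
\begin{equation*}
	\lambda^{\pm}_\ell=\frac{v_\ell(1-c_s^2)\pm c_s\sqrt{(1-|\bm v|^2)\bigl[(1-|\bm v|^2c_s^2)-v_\ell^2(1-c_s^2)\bigr]}}{1-|\bm v|^2c_s^2},\qquad \varrho_\ell(\bm U)=\max\{|\lambda^-_\ell|,|\lambda^+_\ell|\}.
\end{equation*}
The delicate point, already visible in the rest-frame case $\bm v=\bm 0$ where the exact threshold is $p/\bigl(\rho\sqrt{e(2+e)}\bigr)$, is that this threshold is \emph{strictly} smaller than $\varrho_\ell(\bm U)$ for all $\gamma\in(1,2]$; this strict gap is precisely what yields $f_\pm(1/\varrho_\ell)>0$ (and hence membership in the open set $\mathcal{G}$) even at the endpoint $\alpha=\varrho_\ell(\bm U)$. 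Once $f_\pm>0$ is secured, $E^\pm>0$ follows by continuity in $t$ from $E^\pm|_{t=0}=E>0$, since $E^\pm=0$ would force $f_\pm\le0$. An alternative and more systematic route would replace the direct factorization by the linearization of $g$ afforded by $\sqrt{D^2+|\bm m|^2}=\max_{\xi_0^2+|\bm\xi|^2=1}(D\xi_0+\bm m\cdot\bm\xi)$, the elementary Cauchy--Schwarz form of the geometric quasilinearization of \cite{WuShu2021GQL}; this reduces $g(\bm W^\pm)>0$ to a family of scalar inequalities indexed by the unit vector $(\xi_0,\bm\xi)$, each of which is linear in the splitting and is again closed by the same characteristic-speed bound.
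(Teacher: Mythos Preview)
The paper does not give its own proof of this lemma; it simply recalls the statement and cites \cite{2015High}, where the Lax--Friedrichs splitting property for ideal RHD was first established. Your outline is a faithful reconstruction of the strategy used there: obtain $D^\pm>0$ from the strict inequality $|v_\ell|<\varrho_\ell(\bm U)$ (which holds because $p>0$ forces $c_s>0$), and then reduce $g(\bm W^\pm)>0$ to a sign analysis of the quadratic $f_\pm(t)=(E^\pm)^2-(D^\pm)^2-|\bm m^\pm|^2$ after substituting the primitive parametrization $D=\rho W$, $\bm m=\rho hW^2\bm v$, $E=\rho hW^2-p$.

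Your rest-frame check is correct and already isolates why the assumption $\gamma\in(1,2]$ is needed. The step you flag as the ``main obstacle''---comparing the positive root of $f_\pm$ with the extremal acoustic speed in the general boosted frame---is indeed where all the work lies; in \cite{2015High} it is carried out by a direct (and fairly lengthy) algebraic simplification rather than by the GQL linearization you mention as an alternative. Either route closes the argument; your proposal is sound but, like the paper itself, defers the actual computation to the cited reference.
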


Similar to \cite[Proposition 3.1]{chen2022physical}, we can prove that the PCP limited point values \eqref{eq:PCPlimiter} satisfy the following property. 

\begin{lemma}\label{1D pcp inequality}
	If $\overline{\bm{U}}_i\in {\mathcal{G}}$, then the PCP limited point values \eqref{eq:PCPlimiter} satisfy 
	\begin{equation}\label{key311}
		\bm{U}_{i\oplus a_\ell}\in {\mathcal{G}} \quad \forall \ell \in \{1,2,3,4\}, \qquad 
		{\bm \Pi}_{i} := \frac{\overline{\bm{U}}_i-\hat{\omega}_1 {\bm U}_{i\oplus \left(-\frac12\right)}-\hat{\omega}_4 {\bm U}_{i\oplus \frac{1}{2}}}{1-2\hat{\omega}_1} 
		\in {\mathcal{G}}. 
	\end{equation}
\end{lemma}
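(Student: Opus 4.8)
The plan is to analyze the two stages of the limiter \eqref{eq:PCPlimiter} separately, following the standard Zhang--Shu scaling-limiter argument but with the concave constraint function $g$ in place of a simple positivity constraint. The two structural facts I would exploit throughout are: (i) the density $D(\bm{U})$ is the linear first component, whereas $g(\bm{U})$ is concave and $\mathcal{G}$ is convex (recalled below \eqref{eq:G1}); and (ii) both the density limiter and the $g$-limiter are affine maps that pull the reconstructed values toward the admissible cell average $\overline{\bm{U}}_i$, so each limited value is a convex combination of a reconstructed value and $\overline{\bm{U}}_i$.

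First I would treat the density stage. Since $\overline{\bm{U}}_i\in\mathcal{G}$ we have $\overline{D}_i>0$ and $0<\epsilon_D\le\overline{D}_i$. Writing $\widetilde{D}_{i\oplus a_\ell}=\theta_D D^*_{i\oplus a_\ell}+(1-\theta_D)\overline{D}_i$ as a convex combination (legitimate because $\theta_D\in[0,1]$), and using that $D_{\min}$ is the minimum of all the densities being limited, a short two-case distinction according to whether $D_{\min}\ge\epsilon_D$ shows $\theta_D D_{\min}+(1-\theta_D)\overline{D}_i\ge\epsilon_D$ in every case; hence $\widetilde{D}_{i\oplus a_\ell}\ge\epsilon_D>0$ for all $\ell$. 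Then, substituting the convex-combination form into the $\bm{\Pi}$-type quotient and using $\hat{\omega}_1=\hat{\omega}_4$, a direct algebraic manipulation shows that the density of $\tfrac{\overline{D}_i-\hat{\omega}_1\widetilde{D}_{i\oplus(-1/2)}-\hat{\omega}_4\widetilde{D}_{i\oplus 1/2}}{1-2\hat{\omega}_1}$ collapses to the convex combination $\theta_D A+(1-\theta_D)\overline{D}_i$ with $A\ge D_{\min}$, so it too is $\ge\epsilon_D$.

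Next I would treat the $g$-stage. The key identity is that, since the post-limiter values satisfy $\bm{U}_{i\oplus a_\ell}=\theta_g\widetilde{\bm{U}}_{i\oplus a_\ell}+(1-\theta_g)\overline{\bm{U}}_i$ and $\bm{\Pi}_i$ is linear in the point values, the same algebra as in the density stage gives $\bm{\Pi}_i=\theta_g\bm{\Pi}_i^{\mathrm{pre}}+(1-\theta_g)\overline{\bm{U}}_i$, where $\bm{\Pi}_i^{\mathrm{pre}}$ is assembled from the $\widetilde{\bm{U}}$ values. I would then invoke the consistency relation $\overline{\bm{U}}_i=\hat{\omega}_1\widetilde{\bm{U}}_{i\oplus(-1/2)}+\hat{\omega}_4\widetilde{\bm{U}}_{i\oplus 1/2}+(1-2\hat{\omega}_1)\bm{\Pi}_i^{\mathrm{pre}}$, which is a convex combination with positive weights summing to one (using $\hat{\omega}_1=\hat{\omega}_4=\tfrac{1}{12}$ and $1-2\hat{\omega}_1=\tfrac{5}{6}>0$). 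Concavity of $g$ then yields $g(\overline{\bm{U}}_i)\ge g_{\min}$, which guarantees that the $\theta_g$ formula behaves correctly. Applying concavity once more to each convex combination gives $g(\bm{U}_{i\oplus a_\ell})\ge\theta_g g_{\min}+(1-\theta_g)g(\overline{\bm{U}}_i)$ and the identical lower bound for $g(\bm{\Pi}_i)$; the same two-case analysis used for the density then forces this bound to be $\ge\epsilon_g>0$. Finally, since $g$-limiting is again a convex combination and $D$ is linear, the densities of $\bm{U}_{i\oplus a_\ell}$ and of $\bm{\Pi}_i$ remain $\ge\epsilon_D>0$; combining the density and $g$ bounds shows all these states lie in $\mathcal{G}$.

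I expect the main obstacle to be the bookkeeping in the two convex-combination identities---verifying that both the $\bm{\Pi}$-type quotient after density limiting and the final $\bm{\Pi}_i$ after $g$-limiting reduce to clean convex combinations of $\overline{\bm{U}}_i$ with an admissible partner---since this is precisely where the positivity of the quadrature weights and the cancellation $\hat{\omega}_1=\hat{\omega}_4$ are used in an essential way. Everything else reduces to the elementary two-case analysis of the scaling factors $\theta_D,\theta_g$ together with the concavity of $g$ and the linearity of $D$.
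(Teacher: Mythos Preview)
Your proposal is correct and follows precisely the standard Zhang--Shu scaling-limiter argument (linearity of $D$, concavity of $g$, and the reduction of the $\bm{\Pi}_i$-quotient to a convex combination $\theta\,\bm{\Pi}_i^{\mathrm{pre}}+(1-\theta)\overline{\bm U}_i$) that the paper invokes by citing \cite[Proposition~3.1]{chen2022physical}; the paper itself does not write out a proof of this lemma beyond that reference, so there is nothing further to compare.
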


Based on the above two lemmas, we are now ready to show the PCP property of the proposed HWENO scheme. 

\begin{Theorem}\label{1D PCP thm}
	Consider the proposed 1D HWENO scheme \eqref{eq:1D discrete equation1}--\eqref{eq:1D discrete equation2} with the Lax--Friedrichs flux \eqref{1D LF}. If $\overline {\bm U}_i \in {\mathcal G}$ for all $i$, then 
	the updated cell averages 
	\begin{equation}\label{key333}
		\bm{\overline{U}}_i+\Delta t \mathcal{L}_U(\bm{U}_h(t),i) \in {\mathcal G} \qquad \forall i
	\end{equation}
	under the CFL condition
	\begin{equation}\label{cond:cfl1D}
		\Delta t\leq \frac{\hat{\omega}_1\Delta x }{ \alpha}.
	\end{equation}
\end{Theorem}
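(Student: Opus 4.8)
The plan is to rewrite the updated cell average as a convex combination of states that are already known to lie in $\mathcal{G}$, and then invoke the convexity of $\mathcal{G}$. Writing $\lambda:=\Delta t/\Delta x$ and substituting the Lax--Friedrichs flux \eqref{1D LF} into \eqref{key333}, the quantity to analyze is $\bm{\overline{U}}_i-\lambda(\hat{\bm{F}}_{i+1/2}-\hat{\bm{F}}_{i-1/2})$. First I would exploit the exactness of the four-point Gauss--Lobatto quadrature for polynomials of degree up to $5$: since the reconstructed polynomial has degree at most $5$, and each step of the PCP limiter is a uniform affine contraction of the nodal values toward $\bm{\overline{U}}_i$ using a single factor across all quadrature nodes (hence preserves the cell average), one has $\bm{\overline{U}}_i=\sum_{\ell=1}^4\hat{\omega}_\ell\bm{U}_{i\oplus a_\ell}$. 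Isolating the two interface nodes and grouping the two interior nodes into ${\bm \Pi}_i$ yields the decomposition $\bm{\overline{U}}_i=\hat{\omega}_1\bm{U}_{i\oplus(-\frac12)}+\hat{\omega}_4\bm{U}_{i\oplus\frac12}+(1-2\hat{\omega}_1){\bm \Pi}_i$.

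Next I would substitute this decomposition into the updated average and regroup all terms according to which point value they involve, using $\hat{\omega}_1=\hat{\omega}_4$. The neighbour values $\bm{U}_{(i+1)\oplus(-\frac12)}$ and $\bm{U}_{(i-1)\oplus\frac12}$ pair cleanly with their flux contributions to form the Lax--Friedrichs operators $\bm{F}^{-}_1(\cdot,\alpha)$ and $\bm{F}^{+}_1(\cdot,\alpha)$ with coefficient $\tfrac{\lambda\alpha}{2}$ each, whereas each own-cell interface value contributes one such Lax--Friedrichs operator plus a leftover bare point value of coefficient $\hat{\omega}_1-\lambda\alpha$. The target identity I would verify is
\begin{align*}
	\bm{\overline{U}}_i-\lambda\left(\hat{\bm{F}}_{i+1/2}-\hat{\bm{F}}_{i-1/2}\right)
	&=(1-2\hat{\omega}_1){\bm \Pi}_i+(\hat{\omega}_1-\lambda\alpha)\bm{U}_{i\oplus\frac12}+(\hat{\omega}_1-\lambda\alpha)\bm{U}_{i\oplus(-\frac12)} \\
	&\quad+\tfrac{\lambda\alpha}{2}\bm{F}^{-}_1\!\left(\bm{U}_{i\oplus\frac12},\alpha\right)+\tfrac{\lambda\alpha}{2}\bm{F}^{+}_1\!\left(\bm{U}_{i\oplus(-\frac12)},\alpha\right) \\
	&\quad+\tfrac{\lambda\alpha}{2}\bm{F}^{-}_1\!\left(\bm{U}_{(i+1)\oplus(-\frac12)},\alpha\right)+\tfrac{\lambda\alpha}{2}\bm{F}^{+}_1\!\left(\bm{U}_{(i-1)\oplus\frac12},\alpha\right),
\end{align*}
which I would confirm by matching the coefficients of every $\bm{U}$-term and $\bm{F}_1$-term on both sides against the expansion of the Lax--Friedrichs flux \eqref{1D LF}.

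Finally, I would check the two structural properties of this representation. The seven coefficients sum to $(1-2\hat{\omega}_1)+2(\hat{\omega}_1-\lambda\alpha)+4\cdot\tfrac{\lambda\alpha}{2}=1$, so the right-hand side is an affine combination; and all coefficients are nonnegative precisely when $\hat{\omega}_1-\lambda\alpha\ge 0$, i.e.\ $\Delta t\le \hat{\omega}_1\Delta x/\alpha$, which is exactly the CFL condition \eqref{cond:cfl1D}. Each constituent lies in $\mathcal{G}$: the four Lax--Friedrichs operators belong to $\mathcal{G}$ by Lemma \ref{lemma:LF pcp} (since $\alpha\ge\varrho_1$ at every interface node by the definition of $\alpha$), the own-cell interface values and ${\bm \Pi}_i$ belong to $\mathcal{G}$ by Lemma \ref{1D pcp inequality} applied to cell $I_i$, and the two neighbour interface values belong to $\mathcal{G}$ by the same lemma applied to $I_{i\pm1}$ (whose averages lie in $\mathcal{G}$ by hypothesis). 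Since $\mathcal{G}$ is convex, the convex combination lies in $\mathcal{G}$, which proves \eqref{key333}.

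The main obstacle I anticipate is the bookkeeping in the regrouping step: one must track the coefficient of each of the six distinct point values simultaneously across the cell-average decomposition, the central flux terms, and the numerical-diffusion terms, and recognize that the single binding nonnegativity constraint falls on the own-cell interface coefficient $\hat{\omega}_1-\lambda\alpha$, from which the sharp CFL bound emerges. A secondary technical point worth stating carefully is the justification that the quadrature-based identity $\bm{\overline{U}}_i=\sum_\ell\hat{\omega}_\ell\bm{U}_{i\oplus a_\ell}$ survives the PCP limiting, which hinges on the weights summing to one together with the limiter being a common affine scaling toward $\bm{\overline{U}}_i$.
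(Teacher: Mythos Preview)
Your overall strategy---rewrite the updated cell average as a convex combination of admissible states via the Lax--Friedrichs splitting and then invoke the convexity of $\mathcal{G}$---is exactly the paper's approach, and your target identity and coefficient check match the paper's decomposition.

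There is, however, a flaw in your justification of the identity $\bm{\overline{U}}_i=\sum_{\ell=1}^4\hat{\omega}_\ell\bm{U}_{i\oplus a_\ell}$. You argue it by quadrature exactness for a single degree-$5$ polynomial, but in troubled cells the four limited point values do \emph{not} arise from one polynomial: the interface values $\bm{U}^*_{i\oplus(\pm\frac12)}$ are obtained via characteristic decomposition with \emph{different} eigenvector matrices $\bm{R}_{i\pm1/2}$, while the interior Gauss--Lobatto values use component-wise HWENO. Hence the quadrature identity need not hold, and ``grouping the two interior nodes into ${\bm\Pi}_i$'' may not reproduce the cell average.

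This is not fatal, because the paper does not need that identity. In Lemma~\ref{1D pcp inequality}, ${\bm\Pi}_i$ is \emph{defined} as $\frac{\overline{\bm{U}}_i-\hat{\omega}_1\bm{U}_{i\oplus(-\frac12)}-\hat{\omega}_4\bm{U}_{i\oplus\frac12}}{1-2\hat{\omega}_1}$, so the decomposition $\bm{\overline{U}}_i=(1-2\hat{\omega}_1){\bm\Pi}_i+\hat{\omega}_1\bm{U}_{i\oplus(-\frac12)}+\hat{\omega}_1\bm{U}_{i\oplus\frac12}$ holds tautologically; the substance is that the PCP limiter in Step~3 was engineered (via the extra term in $D_{\min}$ and $g_{\min}$) so that this residual ${\bm\Pi}_i$ lands in $\mathcal{G}$. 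Once you replace your quadrature paragraph with this observation, the rest of your argument coincides with the paper's proof.
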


\begin{proof}
	Based on \eqref{key311}, we have the following convex decomposition for the cell average:
	\begin{equation*}
		\bm{\overline{U}}_i = (1-2\hat{\omega}_1)\bm{\Pi}_i +\hat{\omega}_1\bm{U}_{i\oplus \left(-\frac12\right)}+\hat{\omega}_1\bm{U}_{i\oplus \frac12}
	\end{equation*}
with $\bm{\Pi}_i\in {\mathcal G}$, $\bm{U}_{i\oplus \left(-\frac12\right)}\in {\mathcal G}$, and $\bm{U}_{i\oplus \frac12} \in {\mathcal G}$. 
It follows that 
\begin{align*}
		\bm{\overline{U}}_i+\Delta t \mathcal{L}_U(\bm{U}_h(t),i) &= \overline{\bm{U}}_i- \frac{\Delta t}{\Delta x} \Big( \hat{\bm{F}}_{i+1/2}-\hat{\bm{F}}_{i-1/2} \Big)\\
		&=(1-2\hat{\omega}_1){\bm \Pi}_{i}+\hat{\omega}_1\bm{U}_{i\oplus \left(-\frac12\right)}+\hat{\omega}_1\bm{U}_{i\oplus \frac12}
		\\
		& \quad -\frac{\Delta t}{2\Delta x}\Big\{\left[\bm{F}_1\left(\bm{ U}_{i\oplus \frac12}\right)\!+\!\bm{F}_1\left(\bm{ U}_{(i+1)\oplus \left(-\frac12\right)}\right) - \alpha\left(\bm{ U}_{(i+1)\oplus \left(-\frac12\right)}\!-\!\bm{U}_{i\oplus \frac12}\right)\right]\!
		\\
		& \quad 
		-\!\left[\bm{F}_1\left(\bm{ U}_{i\oplus \left(-\frac12\right)}\right)\!+\!\bm{F}_1\left(\bm{ U}_{(i-1)\oplus \frac12}\right)\!-\!\alpha\left(\bm{U}_{i\oplus \left(-\frac12\right)}\!-\!\bm{ U}_{(i-1)\oplus \frac12}\right)\right]\Big\}
		\\
		&=(1-2\hat{\omega}_1)\bm{\Pi }_i+ 
		\left ( \hat{\omega}_1-\frac{\alpha\Delta t}{\Delta x}\right )\bm{U}_{i\oplus \left(-\frac12\right)}+
		\left (\hat{\omega}_1-\frac{\alpha\Delta t}{\Delta x}\right )\bm{U}_{i\oplus \frac12}
		\\
		& \quad + \frac{\alpha\Delta t}{2\Delta x}\Big  [\bm{F}^{-}_1(\bm{U}_{(i+1)\oplus \left(-\frac12\right)},\alpha)+\bm{F}^{-}_1(\bm{U}_{i\oplus \frac12},\alpha)\Big]\\
		& \quad +\frac{\alpha\Delta t}{2\Delta x}\Big [\bm{F}^{+}_1(\bm{U}_{(i-1)\oplus \frac12},\alpha)+\bm{F}^{+}_1(\bm{U}_{i\oplus \left(-\frac12\right)},\alpha)\Big ].
\end{align*}
Under the CFL condition \eqref{cond:cfl1D}, $\bm{\overline{U}}_i+\Delta t \mathcal{L}_U(\bm{U}_h(t),i)$
has been reformulated into a convex combination form. 
Thanks to Lemma \ref{lemma:LF pcp} and the convexity of $\mathcal G$, we have $\bm{\overline{U}}_i+\Delta t \mathcal{L}_U(\bm{U}_h(t),i) \in {\mathcal G}$ for all $i$. The proof is completed. 
\end{proof}

Theorem \ref{1D PCP thm} implies that the proposed HWENO scheme is PCP if the forward Euler method is used for time discretization. Since the SSP Runge--Kutta method \eqref{eq:SSPRK3} is formally a convex combination of forward Euler, the PCP property remains valid for the fully discrete HWENO scheme \eqref{eq:SSPRK3}, due to the convexity of $\mathcal G$.

\section{Two-dimensional PCP HWENO scheme}\label{sec:2D HWENO scheme}
In this section, we present the PCP finite volume HWENO scheme for the 2D special RHD equations
\begin{equation}\label{eq:rhd_2D}
	\frac{\partial\bm{U}}{\partial t}+\frac{\partial\bm{F}_1(\bm{U})}{\partial x}+\frac{\partial\bm{F}_2(\bm{U})}{\partial y}={\bf 0},
\end{equation}
where 
\begin{equation*}
	\bm{U}=\left(D,m_1,m_2,E\right)^\top,\quad \bm{F}_1=\left(Dv_1,m_1v_1+p,m_2v_1,m_1\right)^\top,\quad
	\bm{F}_2=\left(Dv_2,m_1v_2+p,m_2v_2+p,m_2\right)^\top.
\end{equation*}
Divide the computational domain into $N_x\times N_y$ uniform cells $I_{i,j}=\left [ x_{i-1/2},x_{i+1/2}\right] \times \left [y_{j-1/2},y_{j+1/2}\right]$, $1\leq i\leq N_x$, $1\leq j\leq N_y$, with the cell center $(x_i,y_j)=\left(\frac{x_{i-1/2}+x_{i+1/2}}{2},\frac{y_{j-1/2}+y_{j+1/2}}{2}\right)$. Let $\Delta x=x_{i+1/2}-x_{i-1/2}$ and $\Delta y=y_{j+1/2}-y_{j-1/2}$ denote the spatial step-sizes in the $x$- and $y$-directions, respectively.


\subsection{2D PCP finite volume HWENO scheme}\label{2D HWENO}

In this subsection, we give the 2D finite volume PCP HWENO scheme. For the notational convenience, we denote  $x_{i+a}=x_i+a\Delta x$ and $y_{j+b}=y_j+b\Delta x$, where $a \in [-\frac12,\frac12]$ and $b \in [-\frac12,\frac12]$ are real numbers.

The semi-discrete finite volume HWENO scheme for the RHD equations \eqref{eq:rhd_2D} is given by 
\begin{align}\label{eq:2D discrete equation1}
		\frac{\mathrm{d}\overline{\bm{U}}_{i,j}\left(t\right)}{\mathrm{d}t}=&-\sum_{\ell=1}^{3}\omega_\ell\frac{\hat{\bm{F}}^1_{i+\frac{1}{2},j+b_\ell}-\hat{\bm{F}}^1_{i-\frac{1}{2},j+b_\ell}}{\Delta x}-\sum_{\ell=1}^{3}\omega_\ell\frac{\hat{\bm{F}}^2_{i+b_\ell,j+\frac{1}{2}}-\hat{\bm{F}}^2_{i+b_\ell,j-\frac{1}{2}}}{\Delta y}:=\mathcal{L}_U(\bm{U}_h(t),i,j),
		\\
		\frac{\mathrm{d}\overline{\bm{V}}_{i,j}\left(t\right)}{\mathrm{d}t}=&-\sum_{\ell=1}^{3}\omega_\ell\frac{\hat{\bm{F}}^1_{i+\frac{1}{2},j+b_\ell}+\hat{\bm{F}}^1_{i-\frac{1}{2},j+b_\ell}}{2\Delta x}-\sum_{\ell=1}^{3}\omega_\ell b_\ell\frac{\hat{\bm{F}}^2_{i+b_\ell,j+\frac{1}{2}}-\hat{\bm{F}}^2_{i+b_\ell,j-\frac{1}{2}}}{\Delta y}\nonumber
		\\
		\label{eq:2D discrete equation2}
		&+\sum_{k=1}^{3}\sum_{\ell=1}^{3}\frac{\omega_k\omega_\ell \bm{F}_1\left(\bm{U}_{i\oplus b_\ell,j\oplus b_k}\right)}{\Delta x}:=\mathcal{L}_V(\bm{U}_h(t),i,j),
		\\
		\frac{\mathrm{d}\overline{\bm{W}}_{i,j}\left(t\right)}{\mathrm{d}t}=&-\sum_{\ell=1}^{3}\omega_\ell b_\ell\frac{\hat{\bm{F}}^1_{i+\frac{1}{2},j+b_\ell}-\hat{\bm{F}}^1_{i-\frac{1}{2},j+b_\ell}}{\Delta x}-\sum_{\ell=1}^{3}\omega_\ell\frac{\hat{\bm{F}}^2_{i+b_\ell,j+\frac{1}{2}}+\hat{\bm{F}}^2_{i+b_\ell,j-\frac{1}{2}}}{2\Delta y}\nonumber
		\\
		\label{eq:2D discrete equation3}
		&+\sum_{k=1}^{3}\sum_{\ell=1}^{3}\frac{\omega_k\omega_\ell \bm{F}_2\left(\bm{U}_{i\oplus b_\ell,j\oplus b_k}\right)}{\Delta y}:=\mathcal{L}_W(\bm{U}_h(t),i,j),
\end{align}

\noindent where 
$$\bm{\overline{U}}_{i,j}\left(t\right) \approx \frac{1}{\Delta x\Delta y}\int_{I_{i,j}}\bm{U}\left(x,y,t\right)\mathrm{d}x\mathrm{d}y$$
denotes the approximation to the zeroth order moment in $I_{i,j}$, 
\begin{eqnarray*}
	\begin{aligned}
		&\bm{\overline{V}}_{i,j}\left(t\right) \approx \frac{1}{\Delta x\Delta y}\int_{I_{i,j}}\bm{U}\left(x,y,t\right)\frac{x-x_i}{\Delta x}\mathrm{d}x\mathrm{d}y,\\
		&\bm{\overline{W}}_{i,j}\left(t\right) \approx \frac{1}{\Delta x\Delta y}\int_{I_{i,j}}\bm{U}\left(x,y,t\right)\frac{y-y_j}{\Delta x}\mathrm{d}x\mathrm{d}y,
	\end{aligned}
\end{eqnarray*}
denote the approximations to the first order moments in the $x$- and $y$-directions in $I_{i,j}$, respectively, $$\bm{U}_h(t)\!:=\!\{\bm{\overline{U}}_{i,j}\left(t\right)\}_{1\leq i\leq N_x,1\leq j\leq N_y}\cup \{\bm{\overline{V}}_{i,j}\left(t\right)\}_{1\leq i\leq N_x,1\leq j\leq N_y}\cup \{\bm{\overline{W}}_{i,j}\left(t\right)\}_{1\leq i\leq N_x,1\leq j\leq N_y}$$
denotes the set that contains all cells’ zeroth and first order moments, and $\bm{U}_{i\oplus b_\ell,j\oplus b_k}$ denotes the approximation to $\bm{U}\left(x_{i+b_\ell},y_{j+b_k},t\right)$ within the cell $I_{i,j}$. Here we follow \cite{zhao2020hermite} and use the three-point Gauss quadrature with the quadrature weights and nodes given by
\begin{align*}
	&\omega_1=\omega_3=\frac{5}{18}, \qquad \omega_2=\frac{4}{9},
	\\
	&\{ b_{\ell}\}_{\ell=1}^3=\left\{ -\frac{\sqrt{15}}{10},~ 0,~ \frac{\sqrt{15}}{10} \right\}.
\end{align*} 
In \eqref{eq:2D discrete equation1}--\eqref{eq:2D discrete equation3}, $\hat{\bm{F}}^1_{i+\frac{1}{2},j+b_\ell}$ and $\hat{\bm{F}}^2_{i+b_\ell,j+\frac{1}{2}}$ denote the numerical flux at the cell interface points $(x_{i+\frac{1}{2}},y_{j+b_\ell})$ and $(x_{i+b_\ell},y_{j+\frac{1}{2}})$, respectively. In this paper, we employ the Lax--Friedrichs numerical fluxes
\begin{eqnarray}\label{2D LF}
	\left\{
	\begin{aligned}
	&	\hat{\bm{F}}^1_{i+\frac{1}{2},j+b_\ell}=\frac{1}{2}\left(\bm{F}_1\left(\bm{U}_{i\oplus \frac{1}{2},j\oplus b_\ell}\right)\!+\!\bm{F}_1\left(\bm{U}_{(i+1)\oplus \left(-\frac12\right),j\oplus b_\ell}\right)\!-\!\alpha_1\left(\bm{U}_{(i+1)\oplus \left(-\frac12\right),j\oplus b_\ell}\!-\!\bm{U}_{i\oplus \frac{1}{2},j\oplus b_\ell}\right)\right),\\
	&	\hat{\bm{F}}^2_{i+b_\ell,j+\frac{1}{2}}=\frac{1}{2}\left(\bm{F}_2\left(\bm{U}_{i\oplus b_\ell,j\oplus \frac{1}{2}}\right)\!+\!\bm{F}_2\left(\bm{U}_{i\oplus b_\ell,(j+1)\oplus \left(-\frac12\right)}\right)\!-\!\alpha_2\left(\bm{U}_{i\oplus b_\ell,(j+1)\oplus \left(-\frac12\right)}\!-\!\bm{U}_{i\oplus b_\ell,j\oplus \frac{1}{2}}\right)\right),
	\end{aligned}
	\right.
\end{eqnarray}
which will be useful for achieving the PCP property of our HWENO shceme. Here
\begin{eqnarray}
	\left\{
	\begin{aligned}
		&\alpha_1=\max_{i,j,\ell}\left\{\max\left\{\varrho_1\left(\bm{U}_{i\oplus \frac{1}{2},j\oplus b_\ell}\right),\varrho_1\left(\bm{U}_{(i+1)\oplus \left(-\frac12\right),j\oplus b_\ell}\right)\right\}\right\},\\
		&\alpha_2=\max_{i,j,\ell}\left\{\max\left\{\varrho_2\left(\bm{U}_{i\oplus b_\ell,j\oplus \frac{1}{2}}\right),\varrho_2\left(\bm{U}_{i\oplus b_\ell,(j+1)\oplus \left(-\frac12\right)}\right)\right\}\right\},
	\end{aligned}
	\right.
\end{eqnarray}
with $\varrho_1\left(\bm{U}\right)$ and $\varrho_2\left(\bm{U}\right)$ denoting the spectral radius of the Jacobian matrices $\frac{\partial \bm F_1(\bm U)}{\partial \bm U}$ and $\frac{\partial \bm F_2(\bm U)}{\partial \bm U}$, respectively.



To compute $\mathcal{L}_U(\bm{U}_h(t),i)$, $\mathcal{L}_V(\bm{U}_h(t),i)$ and $\mathcal{L}_W(\bm{U}_h(t),i)$ in \eqref{eq:2D discrete equation1}--\eqref{eq:2D discrete equation3}, one needs to reconstruct point values $\{\bm{U}_{i\oplus b_\ell,j\oplus b_k}\}_{\ell,k=1}^3$, $\{\bm{U}_{i\oplus b_\ell,j\oplus \left(\pm\frac12\right)}\}_{\ell=1}^3$ and $\{\bm{U}_{i\oplus \left(\pm\frac12\right),j\oplus b_\ell}\}_{\ell=1}^3$ by using $\{\bm{\overline{U}}_{i,j}\}$ and $\{\bm{\overline{V}}_{i,j}\}$.



\begin{figure}[!htbp]
	\centering
	\begin{tikzpicture}
		\begin{tikzpicture}[x=0.5cm, y=0.5cm, thick, xshift=1.5cm,yshift=0.8cm]
			\draw (0, 0) rectangle (9, 9);
			\foreach \x in {3,6}
			\draw (\x, 0) -- (\x, 9);
			\foreach \y in {3,6}
			\draw (0, \y) -- (9, \y);
			\foreach \y in {1,2,3}
			\node (\y) at(3*\y-1.5,1.5) {$I_{i,j}^\y$};
			\foreach \y in {4,5,6}
			\node (\y) at(3*\y-1.5-3*3,4.5) {$I_{i,j}^\y$};
			\foreach \y in {7,8,9}
			\node (\y) at(3*\y-1.5-3*6,7.5) {$I_{i,j}^\y$};
			\node (1) at(0,-0.8) {$x_{i-3/2}$};
			\node (1) at(3,-0.8) {$x_{i-1/2}$};
			\node (1) at(6,-0.8) {$x_{i+1/2}$};
			\node (1) at(9,-0.8) {$x_{i+3/2}$};
			\node (1) at(-1.1,0) {$y_{j-3/2}$};
			\node (1) at(-1.1,3) {$y_{j-1/2}$};
			\node (1) at(-1.1,6) {$y_{j+1/2}$};
			\node (1) at(-1.1,9) {$y_{j+3/2}$};
		\end{tikzpicture}
	\end{tikzpicture}
	\caption{\label{FIG:2Dneighbour}}
\end{figure}

For simplicity, denote $\overline{\bm{U}}_{i,j,r}$ as the zeroth-order moment in $I_{i,j}^r$ (see Figure \ref{FIG:2Dneighbour}), and $\overline{\bm{V}}_{i,j,r}$, $\overline{\bm{W}}_{i,j,r}$ as the first-order moments in $x$-direction and $y$-direction in $I_{i,j}^r$. Define 
\begin{align*}
	\bm{U}_{i,j}^S&=\left[\overline{\bm{U}}_{i,j,1}~\dots~\overline{\bm{U}}_{i,j,9} \right],\\
	\bm{V}_{i,j}^S&=\left[\overline{\bm{V}}_{i,j,2}~\overline{\bm{V}}_{i,j,4}~\overline{\bm{V}}_{i,j,5}~\overline{\bm{V}}_{i,j,6}~\overline{\bm{V}}_{i,j,8} \right],\\
	\bm{W}_{i,j}^S&=\left[\overline{\bm{W}}_{i,j,2}~\overline{\bm{W}}_{i,j,4}~\overline{\bm{W}}_{i,j,5}~\overline{\bm{W}}_{i,j,6}~\overline{\bm{W}}_{i,j,8} \right].
\end{align*}

The 2D HWENO reconstruction procedure is also based on two operators $\bm{M}_L$ and $\bm{M}_H$, which are introduced in Appendices \ref{app1} and \ref{app2} for better readability. 
The detailed PCP HWENO reconstruction procedure of our 2D HWENO scheme is summarized as follows:  
	\begin{description}
	\item[Step 1.]
	Use the KXRCF indicator \cite{krivodonova2004shock} dimension-by-dimension to identify the troubled cells. A cell is regarded as a troubled cell as long as it is identified as troubled cell in either $x$-direction or $y$-direction. Then modify the first-order moments in the troubled cells by using the HWENO limiter given in \cite{zhao2020hermite}.
	
	\item[Step 2.]
	Reconstruct the point values $\{\bm{U}_{i\oplus b_\ell,j\oplus b_k}\}_{\ell,k=1}^3$, $\{\bm{U}_{i\oplus b_\ell,j\oplus \left(\pm\frac12\right)}\}_{\ell=1}^3$ and $\{\bm{U}_{i\oplus \left(\pm\frac12\right),j\oplus b_\ell}\}_{\ell=1}^3$.
	
	\begin{itemize}
	\item Employ the linear reconstruction for the inner Gaussian points $\{\bm{U}_{i\oplus b_\ell,j\oplus b_k}\}_{\ell,k=1}^3$:
	$$\bm{U}_{i\oplus b_\ell,j\oplus b_k}^{*}=\bm{M}_L\left(\bm{U}_{i,j}^S,\bm{V}_{i,j}^S,\bm{W}_{i,j}^S,b_{\ell},b_{k}\right)\quad \forall \ell,k\in\left \{1,2,3\right \}.$$
		
	\item Reconstruction for the cell interface points $\{\bm{U}_{i\oplus b_\ell,j\oplus \left(\pm\frac12\right)}\}_{\ell=1}^3$ and $\{\bm{U}_{i\oplus \left(\pm\frac12\right),j\oplus b_\ell}\}_{\ell=1}^3$. 
	
	(i) If $I_{i,j}$ is not a troubled cell, perform linear reconstruction for the cell interface points:
	\begin{align*}
		&\bm{U}_{i\oplus\left(\pm\frac12\right),j\oplus b_\ell}^{*}=\bm{M}_L\left(\bm{U}_{i,j}^S,\bm{V}_{i,j}^S,\bm{W}_{i,j}^S,\pm\frac{1}{2},b_{\ell}\right)\quad \forall \ell\in\left \{1,2,3\right \},\\
		&\bm{U}_{i\oplus b_\ell,j\oplus \left(\pm\frac12\right)}^{*}=\bm{M}_L\left(\bm{U}_{i,j}^S,\bm{V}_{i,j}^S,\bm{W}_{i,j}^S,b_{\ell},\pm\frac{1}{2}\right) \quad \forall \ell\in\left \{1,2,3\right \}.
	\end{align*}
	
	(ii) If $I_{i,j}$ is a troubled cell, perform HWENO reconstruction based on characteristic decomposition for the cell interface points:
	\begin{align*}
		&\bm{U}_{i\oplus \left(\pm\frac12\right),j\oplus b_\ell}^{*}=\bm{R}_{i\pm\frac{1}{2},j}^1\bm{M}_H\left(\left(\bm{R}_{i\pm\frac{1}{2},j}^1\right)^{-1}\bm{U}_{i,j}^S,\left(\bm{R}_{i\pm\frac{1}{2},j}^1\right)^{-1}\bm{V}_{i,j}^S,\left(\bm{R}_{i\pm\frac{1}{2},j}^1\right)^{-1}\bm{W}_{i,j}^S,\pm\frac{1}{2},b_{\ell}\right),\\
		&\bm{U}_{i\oplus b_\ell,j\oplus \left(\pm\frac12\right)}^{*}=\bm{R}_{i,j\pm\frac{1}{2}}^2\bm{M}_H\left(\left(\bm{R}_{i,j\pm\frac{1}{2}}^2\right)^{-1}\bm{U}_{i,j}^S,\left(\bm{R}_{i,j\pm\frac{1}{2}}^2\right)^{-1}\bm{V}_{i,j}^S,\left(\bm{R}_{i,j\pm\frac{1}{2}}^2\right)^{-1}\bm{W}_{i,j}^S,b_{\ell},\pm\frac{1}{2}\right),
	\end{align*}
	for all $\ell\in\left \{1,2,3\right \}$. $\left(\bm{R}_{i+\frac{1}{2},j}^1\right)^{-1}$ and $\bm{R}_{i+\frac{1}{2},j}^1$ are taken as left and right eigenvector matrices of $\bm{A}_1\left(\overline{\bm{U}}_{i,j},\overline{\bm{U}}_{i+1,j}\right)$, which is the Roe matrix \cite{eulderink1994general} associated with $\frac{\partial \bm{F}_1}{\partial {\bm U}}$,
	$\left(\bm{R}_{i,j+\frac{1}{2}}^2\right)^{-1}$ and $\bm{R}_{i,j+\frac{1}{2}}^2$ are taken as left and right eigenvector matrices of $\bm{A}_2\left(\overline{\bm{U}}_{i,j},\overline{\bm{U}}_{i,j+1}\right)$, which is the Roe matrix  associated with $\frac{\partial \bm{F}_2}{\partial {\bm U}}$. The eigenvector matrices and fluxes are computed by using the primitive variables, which are recovered by using the proposed NR methods. 
	\end{itemize}
	
	\item[Step 3.] Perform the PCP limiter on the reconstructed point values $\{\bm{U}_{i\oplus b_\ell,j\oplus b_k}^{*}\}_{\ell,k=1}^3$, $\{\bm{U}_{i\oplus \!\left(\!\pm\!\frac12\!\right)\!,j\oplus b_\ell}^{*}\}_{\ell=1}^3$, and $\{\bm{U}_{i\oplus b_\ell,j\oplus \!\left(\!\pm\!\frac12\!\right)\!}^{*}\}_{\ell=1}^3$ as follows. 
	Define $\Theta := \{ (b_\ell, b_k) \} _{\ell,k=1}^3 \cup \{ (\pm\frac{1}{2}, b_\ell )\}_{\ell=1}^3 \cup \{ ( b_\ell, \pm\frac{1}{2} )\}_{\ell=1}^3$ and 
	$$
	\bm{U}_{i\oplus a,j\oplus b}^*=[D^*_{i\oplus a,j\oplus b}~(m_1)^*_{i\oplus a,j\oplus b}~(m_2)^*_{i\oplus a,j\oplus b}~E^*_{i\oplus a,j\oplus b}]^\top \quad \forall (a,b)\in \Theta. 
	$$
	Denote the first component of $\overline {\bm U}_{i,j}$ as $\overline D_{i,j}$. Let $$\mu_1=\frac{\lambda_1\alpha_1} {\lambda_1\alpha_1+\lambda_2\alpha_2}, \qquad  \mu_2=\frac{\lambda_2\alpha_2}{\lambda_1\alpha_1+\lambda_2\alpha_2)}$$
	with $\lambda_1=\Delta t/\Delta x$, $\lambda_2=\Delta t/\Delta y$.
	\begin{itemize}
		\item  Modify the mass density to enforce its positivity via 
		\begin{align*}
			&\tilde{D}_{i\oplus a,j\oplus b}=\theta_D(D^*_{i\oplus a,j\oplus b}-\overline{D}_{i,j})+\overline{D}_{i,j}  \quad \mbox{with} \quad \theta_D=\min\left \{\left |\frac{\overline{D}_{i,j}-\epsilon_D}{\overline{D}_{i,j}-D_{\min}} \right |,1\right \} ,
			\\
			&D_{\min}=\min\left\{\min\limits_{\ell}\left\{D_{i\oplus \left(\pm\frac12\right),j\oplus b_\ell}^{*}\right\},\min\limits_{\ell}\left\{D_{i\oplus b_\ell,j\oplus \left(\pm\frac12\right)}^{*}\right\},\min\limits_{\ell,k}\left\{D_{i\oplus b_\ell,j\oplus b_k}^{*}\right\},  \widehat {D}_{i,j} \right\}, 
			\\
			& \widehat {D}_{i,j}:=  \frac{\overline{D}_{i,j}\!-\!\sum\limits_{\ell=1}^3\omega_{\ell}\hat{\omega}_1\left [\mu_1\left(D_{i\oplus \frac12,j\oplus b_\ell}^{*}\!+D_{i\oplus \left(-\frac12\right),j\oplus b_\ell}^{*}\right)\!+\!\mu_2\left(D_{i\oplus b_\ell,j\oplus \frac12}^{*}\!+D_{i\oplus b_\ell,j\oplus \left(-\frac12\right)}^{*}\right) \right ]}{1-2\hat{\omega}_1}, 
		\end{align*}
		where $\epsilon_D=\min\limits_{i,j}\left \{10^{-13},\overline{D}_{i,j}\right \}$ is a small positive number introduced to avoid the effect of round-off errors. 
		\item Define $\tilde{\bm{U}}_{i\oplus a,j\oplus b}=[\tilde{D}_{i\oplus a,j\oplus b}~(m_1)^*_{i\oplus a,j\oplus b}~(m_2)^*_{i\oplus a,j\oplus b}~E^*_{i\oplus a,j\oplus b}]^\top$ for all $(a,b)\in \Theta$.
		Enforce the positivity of $g({\bm U})$ by 
		\begin{align} 
			&\bm{U}_{i\oplus a,j\oplus b}=\theta_g(\tilde{\bm{U}}_{i+a,j+b}-\overline{\bm{U}}_{i,j})+\overline{\bm{U}}_{i,j}~~\mbox{with}~~\theta_g=min\left \{\left |\frac{g(\overline{\bm{U}}_{i,j})-\epsilon_g}{g(\overline{\bm{U}}_{i,j})-g_{\min}} \right |,1\right \},\label{eq:PCPlimiter2D}
			\\ 
			&g_{\min} \!=\!\min\left\{\min\limits_{\ell}\left\{g(\tilde{\bm U}_{i\oplus \left(\pm\frac12\right),j\oplus b_\ell})\right\}\!,\min\limits_{\ell}\left\{g(\tilde{\bm U}_{i\oplus b_\ell,j\oplus \left(\pm\frac12\right)})\right\}\!,\min\limits_{\ell,k}\left\{g(\tilde{\bm U}_{i\oplus b_\ell,j\oplus b_k})\right\}\!, g({\widetilde {\bf \Pi}}_{i,j} )  \right\},\nonumber\\
			& {\widetilde {\bf \Pi}}_{i,j} :=  \!\frac{\bm{\overline{U}}_{i,j}\!-\!\sum\limits_{\ell=1}^{3}\!\omega_{\ell}\hat{\omega}_1\!\left [\mu_1\!\left(\tilde{\bm U}_{i\oplus \frac12,j\oplus b_\ell}+\tilde{\bm U}_{i\oplus \left(-\frac12\right),j\oplus b_\ell}\right)\!+\!\mu_2\!\left(\tilde{\bm U}_{i\oplus b_\ell,j\oplus \frac12}+\tilde{\bm U}_{i\oplus b_\ell,j\oplus \left(-\frac12\right)}\right) \right ]}{1-2\hat{\omega}_1} \nonumber,
		\end{align}
		where $\epsilon_g=\min\limits_{i,j}\left \{10^{-13},g(\overline{\bm{U}}_{i,j})\right \}$.
	\end{itemize}

\end{description}

\subsection{Rigorous analysis of PCP property}\label{2D PCP limiter}
In this subsection, we present a rigorous analysis of PCP property of the proposed 2D HWENO scheme. 

Similar to \cite[Proposition 3.1]{chen2022physical}, we can prove that the PCP limited point values \eqref{eq:PCPlimiter2D} satisfy the following property. 

\begin{lemma}\label{2D pcp inequality} If $\overline{\bm{U}}_{i,j}\in{\mathcal{G}}$, then:
	
	\begin{enumerate}[(1)]
		\item
		$\bm{U}_{i\oplus \left(\pm\frac12\right),j\oplus b_\ell}\in{\mathcal{G}}$, $\bm{U}_{i\oplus b_\ell,j\oplus \left(\pm\frac12\right)}\in{\mathcal{G}}$ and  $\bm{U}_{i\oplus b_\ell,j\oplus b_k}\in{\mathcal{G}}$ for all  $\ell,k \in \{1,2,3\}$.
		\item 
		${\bm \Pi}_{i,j}\in{\mathcal{G}}$, where
		\begin{equation}
			{\bm \Pi}_{i,j}\!:=\!\frac{\bm{\overline{U}}_{i,j}\!-\!\sum\limits_{\beta=1}^{3}\!\omega_{\beta}\hat{\omega}_1\!\left [\mu_1\!\left(\bm{U}_{i\oplus \frac12,j\oplus b_\beta}\!+\bm{U}_{i\oplus \left(-\frac12\right),j\oplus b_\beta}\right)\!+\!\mu_2\!\left(\bm{U}_{i\oplus b_\beta,j\oplus \frac12}\!+\bm{U}_{i\oplus b_\beta,j\oplus \left(-\frac12\right)}\right) \right ]}{1-2\hat{\omega}_1}.\label{2Dmidpoint}
		\end{equation}
	\end{enumerate}
\end{lemma}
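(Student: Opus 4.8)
The plan is to mimic the one-dimensional Lemma \ref{1D pcp inequality} and \cite[Proposition 3.1]{chen2022physical}, exploiting that the 2D PCP limiter in Step 3 is a two-stage cut-off limiter assembled from two affine scalings toward the admissible cell mean $\overline{\bm U}_{i,j}\in\mathcal G$. Throughout I would work with the explicit characterization \eqref{eq:G1}, namely $\bm U\in\mathcal G$ iff $D>0$ and $g(\bm U)=E-\sqrt{D^2+|\bm m|^2}>0$, and use that $g$ is concave on all of $\mathbb R^4$. A preliminary bookkeeping step records that the coefficients appearing in the limiter form a genuine convex combination: since $\sum_{\beta}\omega_\beta=1$, $\mu_1+\mu_2=1$, and $\hat\omega_1=\tfrac1{12}<\tfrac12$, the weights $\{\omega_\beta\hat\omega_1\mu_{1,2}\}$ together with $1-2\hat\omega_1$ are nonnegative and sum to one, so no CFL condition is needed for this lemma.

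For part (1) I would first handle the density scaling. Because $\overline D_{i,j}>0$ and $0<\epsilon_D\le\overline D_{i,j}$, the factor $\theta_D\in[0,1]$ is well defined, and the affine map $D\mapsto\theta_D(D-\overline D_{i,j})+\overline D_{i,j}$ sends $D_{\min}$ to exactly $\epsilon_D$; hence $\tilde D_{i\oplus a,j\oplus b}\ge\epsilon_D>0$ for every $(a,b)\in\Theta$. For the subsequent $g$-scaling I would write $\bm U_{i\oplus a,j\oplus b}=\theta_g\tilde{\bm U}_{i\oplus a,j\oplus b}+(1-\theta_g)\overline{\bm U}_{i,j}$ and invoke concavity of $g$:
\[
g(\bm U_{i\oplus a,j\oplus b})\ge\theta_g\,g(\tilde{\bm U}_{i\oplus a,j\oplus b})+(1-\theta_g)\,g(\overline{\bm U}_{i,j})\ge\theta_g\,g_{\min}+(1-\theta_g)\,g(\overline{\bm U}_{i,j})=\epsilon_g>0,
\]
the last equality following from the definition of $\theta_g$ (the case $\theta_g=1$ being immediate since then $g\ge g_{\min}\ge\epsilon_g$). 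As the density is again a convex combination of $\tilde D_{i\oplus a,j\oplus b}\ge\epsilon_D$ and $\overline D_{i,j}>0$, it stays positive, so every limited point value lies in $\mathcal G$.

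The heart of the argument is part (2), and the main obstacle is the bookkeeping across the two stages: $\widehat D_{i,j}$ is built from the \emph{unlimited} densities $D^*$, $\widetilde{\bm\Pi}_{i,j}$ from the density-limited values $\tilde{\bm U}$, and the target $\bm\Pi_{i,j}$ from the fully limited values. I would package this through the affine operator $\Pi(\cdot)$ that sends a collection of point values to the right-hand side of \eqref{2Dmidpoint} (with $\widehat D_{i,j}$ and $\widetilde{\bm\Pi}_{i,j}$ being its density and full-vector evaluations at $D^*$ and $\tilde{\bm U}$, respectively). The convex-decomposition bookkeeping above shows that feeding the constant field $\overline{\bm U}_{i,j}$ into $\Pi$ returns $\overline{\bm U}_{i,j}$; consequently $\Pi$ commutes with any affine scaling toward $\overline{\bm U}_{i,j}$. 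Applying this to the $g$-stage (which scales the full vector) gives $\bm\Pi_{i,j}=\theta_g\widetilde{\bm\Pi}_{i,j}+(1-\theta_g)\overline{\bm U}_{i,j}$, while applying the density-only affinity to the density stage gives that the density of $\widetilde{\bm\Pi}_{i,j}$ equals $\theta_D\widehat D_{i,j}+(1-\theta_D)\overline D_{i,j}$.

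To conclude, since $\widehat D_{i,j}\ge D_{\min}$ I would estimate the density of $\widetilde{\bm\Pi}_{i,j}$ as $\theta_D\widehat D_{i,j}+(1-\theta_D)\overline D_{i,j}\ge\theta_D D_{\min}+(1-\theta_D)\overline D_{i,j}=\epsilon_D>0$, whence the density of $\bm\Pi_{i,j}=\theta_g\widetilde{\bm\Pi}_{i,j}+(1-\theta_g)\overline{\bm U}_{i,j}$ is a convex combination of positive numbers and is positive. For the remaining constraint, concavity of $g$ yields
\[
g(\bm\Pi_{i,j})\ge\theta_g\,g(\widetilde{\bm\Pi}_{i,j})+(1-\theta_g)\,g(\overline{\bm U}_{i,j})\ge\theta_g\,g_{\min}+(1-\theta_g)\,g(\overline{\bm U}_{i,j})=\epsilon_g>0,
\]
where the crucial point is that $\widehat D_{i,j}$ and $\widetilde{\bm\Pi}_{i,j}$ were deliberately inserted into the definitions of $D_{\min}$ and $g_{\min}$. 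Hence $\bm\Pi_{i,j}\in\mathcal G$, completing the proof.
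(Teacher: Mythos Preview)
Your proposal is correct and follows precisely the approach the paper invokes: the paper does not give a self-contained proof here but only refers to \cite[Proposition 3.1]{chen2022physical}, and what you have written is exactly that standard Zhang--Shu style two-stage limiter argument (affine scaling toward the admissible mean, positivity of the first component by construction of $\theta_D$, and concavity of $g$ for the second stage), together with the key observation that the affine operator $\Pi$ fixes $\overline{\bm U}_{i,j}$ and therefore commutes with the scalings. Your explicit bookkeeping that $\sum_\beta\omega_\beta=1$, $\mu_1+\mu_2=1$, $\hat\omega_1<\tfrac12$ so the weights form a genuine convex combination, and your remark that $\widehat D_{i,j}$ and $\widetilde{\bm\Pi}_{i,j}$ are deliberately included in $D_{\min}$ and $g_{\min}$, are precisely the points on which the argument hinges.
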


Base on the Lemmas \ref{lemma:LF pcp} and \ref{2D pcp inequality}, we are now ready to show the PCP property of the proposed 2D HWENO scheme.

\begin{Theorem}\label{thm:2D pcp}
	Consider the proposed 2D HWENO scheme \eqref{eq:2D discrete equation1}--\eqref{eq:2D discrete equation3} with the Lax--Friedrichs flux \eqref{2D LF}. If $\overline {\bm U}_{i,j} \in {\mathcal G}$ for all $i$, $j$, then 
	the updated cell averages 
	\begin{equation}\label{eq:celling}
		\bm{\overline{U}}_{i,j}+\Delta t \mathcal{L}_U(\bm{U}_h(t),i,j) \in {\mathcal G} \qquad \forall i,j
	\end{equation}
	under the CFL condition
	\begin{equation}\label{cond:2DPCPCFL}
	\Delta t\leq \frac{\hat{\omega}_1 }{  \alpha_1/\Delta x+ \alpha_2/\Delta y}.
\end{equation}
\end{Theorem}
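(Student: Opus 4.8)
The plan is to replicate the one–dimensional argument of Theorem \ref{1D PCP thm}, with the single convex decomposition of the cell average replaced by the two–directional one furnished by Lemma \ref{2D pcp inequality}. First I would read off from part (2) of that lemma the convex decomposition
$$
\overline{\bm U}_{i,j} = (1-2\hat\omega_1)\,{\bm \Pi}_{i,j} + \sum_{\ell=1}^{3}\omega_\ell\hat\omega_1\Big[\mu_1\big(\bm U_{i\oplus\frac12,j\oplus b_\ell}+\bm U_{i\oplus(-\frac12),j\oplus b_\ell}\big)+\mu_2\big(\bm U_{i\oplus b_\ell,j\oplus\frac12}+\bm U_{i\oplus b_\ell,j\oplus(-\frac12)}\big)\Big],
$$
whose weights are nonnegative and sum to one since $\sum_\ell\omega_\ell=1$ and $\mu_1+\mu_2=1$; by Lemma \ref{2D pcp inequality} every state on the right lies in $\mathcal G$.

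Next I would substitute the Lax--Friedrichs fluxes \eqref{2D LF} into $\mathcal L_U$ and rewrite each flux through the splitting states $\bm F^{\pm}_\ell(\bm U,\alpha_\ell)=\bm U\pm\alpha_\ell^{-1}\bm F_\ell(\bm U)$ of Lemma \ref{lemma:LF pcp}, using $\hat{\bm F}^1_{i+\frac12,j+b_\ell}=\tfrac{\alpha_1}{2}\big(\bm F^{+}_1(\bm U_{i\oplus\frac12,j\oplus b_\ell},\alpha_1)-\bm F^{-}_1(\bm U_{(i+1)\oplus(-\frac12),j\oplus b_\ell},\alpha_1)\big)$ and its $y$-analogue. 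The two neighboring splitting states then appear with positive coefficients automatically, whereas the cell's \emph{own} interface values appear as a negative multiple of $\bm F^{+}_1$ or $\bm F^{-}_1$; these I would convert via the key identity $-\tfrac{\alpha_1\Delta t}{2\Delta x}\bm F^{+}_1(\bm U,\alpha_1)=\tfrac{\alpha_1\Delta t}{2\Delta x}\bm F^{-}_1(\bm U,\alpha_1)-\tfrac{\alpha_1\Delta t}{\Delta x}\bm U$ (and its mirror for $\bm F^{-}_1$), which produces a positive-coefficient splitting state plus a bare term absorbed into the decomposition above.

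Carrying out this bookkeeping, $\overline{\bm U}_{i,j}+\Delta t\,\mathcal L_U(\bm U_h(t),i,j)$ becomes a linear combination of ${\bm \Pi}_{i,j}$, the cell's own interface values (with coefficients $\omega_\ell(\hat\omega_1\mu_1-\alpha_1\lambda_1)$ on the $x$-interfaces and $\omega_\ell(\hat\omega_1\mu_2-\alpha_2\lambda_2)$ on the $y$-interfaces, where $\lambda_1=\Delta t/\Delta x$, $\lambda_2=\Delta t/\Delta y$), and the splitting states $\bm F^{\pm}_\ell$ of the own and neighboring interface values, all with positive coefficients. A short computation using $\sum_\ell\omega_\ell=1$ and $\mu_1+\mu_2=1$ shows these coefficients sum to one. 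The crux is to verify nonnegativity of the two interface coefficients: since $\mu_1=\tfrac{\lambda_1\alpha_1}{\lambda_1\alpha_1+\lambda_2\alpha_2}$, the inequality $\hat\omega_1\mu_1\ge\alpha_1\lambda_1$ is equivalent, after dividing by $\alpha_1\lambda_1>0$, to $\hat\omega_1\ge\lambda_1\alpha_1+\lambda_2\alpha_2=\Delta t(\alpha_1/\Delta x+\alpha_2/\Delta y)$, i.e.\ exactly the CFL condition \eqref{cond:2DPCPCFL}; by symmetry the same condition guarantees $\hat\omega_1\mu_2\ge\alpha_2\lambda_2$.

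Finally, under \eqref{cond:2DPCPCFL} all coefficients are nonnegative and sum to one, so the updated cell average is a genuine convex combination of states in $\mathcal G$: the splitting states lie in $\mathcal G$ by Lemma \ref{lemma:LF pcp} (applicable because $\alpha_\ell\ge\varrho_\ell$ at every interface value by the definition of $\alpha_1,\alpha_2$), while ${\bm \Pi}_{i,j}$ and the interface values lie in $\mathcal G$ by Lemma \ref{2D pcp inequality}. The convexity of $\mathcal G$ then yields \eqref{eq:celling}. I expect the main obstacle to be precisely the careful two-directional bookkeeping that reduces both interface coefficients to the single sharp bound \eqref{cond:2DPCPCFL}: the specific choice of $\mu_1,\mu_2$ is what couples the two directional CFL restrictions into one, and checking this coupling is exactly right is the delicate point distinguishing the 2D analysis from its 1D counterpart.
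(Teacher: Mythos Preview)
Your proposal is correct and follows essentially the same approach as the paper's proof: both start from the convex decomposition of $\overline{\bm U}_{i,j}$ in Lemma \ref{2D pcp inequality}, substitute the Lax--Friedrichs fluxes, rewrite everything in terms of the splitting states $\bm F^{\pm}_\ell(\cdot,\alpha_\ell)$, and then verify that the resulting coefficients are nonnegative and sum to one under \eqref{cond:2DPCPCFL}. Your observation that the choice of $\mu_1,\mu_2$ makes the two directional constraints $\hat\omega_1\mu_k\ge\alpha_k\lambda_k$ collapse into the single CFL bound is exactly the point, and the paper's computation of the coefficient $\mu_k\omega_\ell\hat\omega_1\big(1-\tfrac{\lambda_k\alpha_k}{\mu_k\hat\omega_1}\big)=\omega_\ell(\hat\omega_1\mu_k-\alpha_k\lambda_k)$ is precisely your interface coefficient.
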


\begin{proof}
	Based on \eqref{2Dmidpoint}, we have the following convex decomposition for the cell average:
	\begin{equation*}
		\bm{\overline{U}}_{i,j}
		\!=\!(1\!-\!2\hat{\omega}_1){\bm \Pi}_{i,j}+\sum_{\beta=1}^{3}\omega_{\beta}\hat{\omega}_1\left[\mu_1\!\left(\bm{U}_{i\oplus \frac12,j\oplus b_\beta}+\bm{U}_{i\oplus \left(-\frac12\right),j\oplus b_\beta}\right)\!+\!\mu_2\!\left(\bm{U}_{i\oplus b_\beta,j\oplus \frac12}+\bm{U}_{i\oplus b_\beta,j\oplus \left(-\frac12\right)}\right) \right]
	\end{equation*}
	with ${\bm \Pi}_{i,j}\in{\mathcal{G}}$, $\bm{U}_{i\oplus \!\left(\!\pm\!\frac12\!\right)\!,j\oplus b_\ell}\in{\mathcal{G}}$, $\bm{U}_{i\oplus b_\ell,j\oplus \!\left(\!\pm\!\frac12\!\right)\!}\in{\mathcal{G}}$, and   $\bm{U}_{i\oplus b_\ell,j\oplus b_k}\in{\mathcal{G}}$ for all $\ell,k \in \{1,2,3\}.$ It follows that
	\begin{equation*}
		\begin{aligned}
			&\bm{\overline{U}}_{i,j}+\Delta t\mathcal{L}_U(\bm{U}_h(t),i,j)\\
			=&(1-2\hat{\omega}_1){\bm \Pi}_{i,j}+
			\sum_{\ell=1}^{3}\omega_{\ell}\hat{\omega}_1\left[\mu_1\!\left(\bm{U}_{i\oplus \frac12,j\oplus b_\ell}+\bm{U}_{i\oplus \!\left(\!-\!\frac12\!\right)\!,j\oplus b_\ell}\right)\!+\!\mu_2\!\left(\bm{U}_{i\oplus b_\ell,j\oplus \frac12}+\bm{U}_{i\oplus b_\ell,j\oplus \!\left(\!-\!\frac12\!\right)\!}\right) \right]
			\\
			&-\Delta t
			\sum_{\ell=1}^{3}\omega_\ell\frac{\hat{\bm{F}}^1_{i+\frac{1}{2},j+b_\ell}-\hat{\bm{F}}^1_{i-\frac{1}{2},j+b_\ell}}{\Delta x}-
			\Delta t\sum_{\ell=1}^{3}\omega_\ell\frac{\hat{\bm{F}}^2_{i+b_\ell,j+\frac{1}{2}}-\hat{\bm{F}}^2_{i+b_\ell,j-\frac{1}{2}}}{\Delta y}\\
			=&(1-2\hat{\omega}_1){\bm \Pi}_{i,j}+\mu_1\sum_{\ell=1}^{3}\omega_{\ell}\hat{\omega}_1\left [\left ( 1-\frac{\lambda_1\alpha_1}{\mu_1\hat{\omega}_1}\right )(\bm{U}_{i\oplus \!\left(\!-\!\frac12\!\right)\!,j\oplus b_\ell}+\bm{U}_{i\oplus \frac{1}{2},j\oplus b_\ell})+\right.\\
			&\left.\frac{\lambda_1\alpha_1}{2\mu_1\hat{\omega}_1}\left(
			\bm{F}_1^+(\bm{U}_{(i\!-\!1)\oplus \frac{1}{2},j\oplus b_\ell},\!\alpha_1\!)\!+\!
			\bm{F}_1^+(\bm{U}_{i\oplus \!\left(\!-\!\frac12\!\right)\!,j\oplus b_\ell},\!\alpha_1\!)\!+\!
			\bm{F}_1^-(\bm{U}_{i\oplus \frac{1}{2},j\oplus b_\ell},\!\alpha_1\!)\!+\!
			\bm{F}_1^-(\bm{U}_{(i\!+\!1)\oplus \!\left(\!-\!\frac12\!\right)\!,j\oplus b_\ell},\!\alpha_1\!)
			\right)\right]\!\\
			& + \mu_2\sum_{\ell=1}^{3}\omega_{\ell}\hat{\omega}_1
			\left [\left ( 1-\frac{\lambda_2\alpha_2}{\mu_2\hat{\omega}_1}
			\right )
			(
			\bm{U}_{i\oplus b_\ell,j\oplus \!\left(\!-\!\frac12\!\right)\!}+\bm{U}_{i\oplus b_\ell,j\oplus \frac{1}{2}}
			)+\right.\\
			&\left.\frac{\lambda_2\alpha_2}{2\mu_2\hat{\omega}_1}\left (
			\bm{F}_2^+(\bm{U}_{i\oplus b_\ell,(j\!-\!1)\oplus \frac{1}{2}},\!\alpha_2\!)\!+\!
			\bm{F}_2^+(\bm{U}_{i\oplus b_\ell,j\oplus \!\left(\!-\!\frac12\!\right)\!},\!\alpha_2\!)\!+\!
			\bm{F}_2^-(\bm{U}_{i\oplus b_\ell,j\oplus \frac{1}{2}},\!\alpha_2\!)\!+\!
			\bm{F}_2^-(\bm{U}_{i\oplus b_\ell,(j\!+\!1)\oplus \!\left(\!-\!\frac12\!\right)\!},\!\alpha_2\!)\right)\right].\\
		\end{aligned}
	\end{equation*}
Under the CFL condition \eqref{cond:2DPCPCFL}, $\bm{\overline{U}}_{i,j}+\Delta t\mathcal{L}_U(\bm{U}_h(t),i,j)$ has been reformulated into a convex combination form. Thanks to Lemma \ref{lemma:LF pcp} and the convexity of ${\mathcal{G}}$, we obtain \eqref{eq:celling}. The proof is completed.
\end{proof}

Theorem \ref{thm:2D pcp} implies that the proposed 2D HWENO scheme is PCP if the forward Euler method is used for time discretization. Since the SSP Runge--Kutta method \eqref{eq:SSPRK3} is formally a convex combination of forward Euler, the PCP property remains valid for the fully discrete HWENO scheme, due to the convexity of $\mathcal G$.

\subsection{Application to axisymmetric RHD equations in cylindrical coordinates}\label{section:cylindrical coordinates}

In this subsection, we present the HWENO scheme for the axisymmetric RHD euqations in cylindrical coordinates $(r,z)$, which can be written as
\begin{equation*}\label{eq:rhd with source}
	\frac{\partial\bm{U}}{\partial t}+\frac{\partial\bm{F}_1(\bm{U})}{\partial r}+\frac{\partial\bm{F}_2(\bm{U})}{\partial z}=\bm{S}(\bm{U},r),
\end{equation*} 
 where the definition of fluxes $\bm{F}_1$ and $\bm{F}_2$ are the same as \eqref{eq:intro_fluxvecter}, and the source term
\begin{equation*}
	\bm{S}(\bm{U},r)=-\frac{1}{r}(Dv_1,m_1v_1,m_2v_1,m_1)^\top.
\end{equation*} 
The semi-discrete HWENO scheme for axisymmetric RHD equations reads 
\begin{eqnarray}\label{2Dscheme with source}
	\left\{
	\begin{aligned}
		\frac{\mathrm{d}\overline{\bm{U}}_{i,j}}{\mathrm{d}t}&=\mathcal{L}_U(\bm{U}_h(t),i,j)+\overline{\bm{S}}_{i,j}, \\
		\frac{\mathrm{d}\overline{\bm{V}}_{i,j}}{\mathrm{d}t}&=\mathcal{L}_V(\bm{U}_h(t),i,j)+\bm{\overline{S}}^1_{i,j}, \\
		\frac{\mathrm{d}\overline{\bm{W}}_{i,j}}{\mathrm{d}t}&=\mathcal{L}_W(\bm{U}_h(t),i,j)+\bm{\overline{S}}^2_{i,j}, 
	\end{aligned}
	\right.
\end{eqnarray}
where
\begin{align*}
	&\bm{\overline{S}}_{i,j}:=\sum_{\ell=1}^{3}\sum_{k=1}^{3}\omega_\ell\omega_k\bm{S}\left(\bm{U}_{i\oplus b_\ell,j\oplus b_k}\right),\\
	&\bm{\overline{S}}_{i,j}^1:=\sum_{\ell=1}^{3}\sum_{k=1}^{3}\omega_\ell\omega_kb_\ell\bm{S}\left(\bm{U}_{i\oplus b_\ell,j\oplus b_k}\right),\\
	&\bm{\overline{S}}_{i,j}^2:=\sum_{\ell=1}^{3}\sum_{k=1}^{3}\omega_\ell\omega_kb_k\bm{S}\left(\bm{U}_{i\oplus b_\ell,j\oplus b_k}\right)
\end{align*}
are the numerical approximations to $\int_{I_{i,j}}\bm{S}(\bm{U},r)\mathrm{d}r\mathrm{d}z$,  $\int_{I_{i,j}}\bm{S}(\bm{U},r)\frac{r-r_i}{\Delta r}\mathrm{d}r\mathrm{d}z$, and       $\int_{I_{i,j}}\bm{S}(\bm{U},r)\frac{z-z_j}{\Delta z}\mathrm{d}r\mathrm{d}z$, respectively. The definitions of spatial operators $\mathcal{L}_U$, $\mathcal{L}_V$ and $\mathcal{L}_W$ are the same as \eqref{eq:2D discrete equation1}--\eqref{eq:2D discrete equation3} (with the variables $x$, $y$ replaced by $r$, $z$).

To analysis the PCP property of the scheme \eqref{2Dscheme with source}, we recall the following lemma proposed in \cite[Section 3.2]{2015High}:

\begin{lemma}\label{source pcp}
	If $\bm{U}\in{\mathcal{G}}$, then $\bm{U}+\Delta t\bm S(\bm U,r)\in{\mathcal{G}}$ under the time step restriction
	$$ v_1  \Delta t\leq\frac{rg(\bm U)}{p+g(\bm U)}.$$
\end{lemma}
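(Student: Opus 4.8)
The plan is to verify directly the two defining constraints of $\mathcal{G}$, namely $D>0$ and $g(\bm{U})>0$, for the updated state $\bm{U}':=\bm{U}+\Delta t\,\bm{S}(\bm{U},r)$. First I would write out the components explicitly. Setting $\kappa:=\frac{\Delta t\,v_1}{r}$, the special structure of $\bm{S}$ reveals that the first three components all share the common factor $(1-\kappa)$, whereas the energy is shifted only by $-\frac{\Delta t}{r}m_1$:
\begin{equation*}
	D'=D(1-\kappa),\qquad m_1'=m_1(1-\kappa),\qquad m_2'=m_2(1-\kappa),\qquad E'=E-\tfrac{\Delta t}{r}m_1.
\end{equation*}

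Next I would establish $D'>0$. Since $p>0$ and $g(\bm{U})>0$, the time restriction yields $\kappa\le\frac{g(\bm{U})}{p+g(\bm{U})}<1$ when $v_1>0$, while $\kappa\le 0<1$ when $v_1\le 0$; in every case $1-\kappa>0$, so $D'=D(1-\kappa)>0$. Because $1-\kappa>0$, the common factor also pulls out of the square root with the correct sign, giving $\sqrt{(D')^2+|\bm{m}'|^2}=(1-\kappa)S$ with $S:=\sqrt{D^2+|\bm{m}|^2}$. Hence $g(\bm{U}')=E-\frac{\Delta t}{r}m_1-(1-\kappa)S=g(\bm{U})+\kappa S-\frac{\Delta t}{r}m_1$. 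The key simplification then uses two identities already available in the excerpt: $S=E-g(\bm{U})$, immediate from the definition of $g$, and $m_1=(E+p)v_1$, obtained from the velocity relation \eqref{key321}. Substituting both collapses the expression into a single linear term:
\begin{equation*}
	g(\bm{U}')=g(\bm{U})+\frac{\Delta t\,v_1}{r}\bigl[(E-g(\bm{U}))-(E+p)\bigr]=g(\bm{U})-\frac{\Delta t\,v_1}{r}\bigl(p+g(\bm{U})\bigr).
\end{equation*}

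To finish, I would observe that the stated restriction $v_1\Delta t\le\frac{r\,g(\bm{U})}{p+g(\bm{U})}$ is exactly equivalent to $\frac{\Delta t\,v_1}{r}(p+g(\bm{U}))\le g(\bm{U})$, which gives $g(\bm{U}')\ge 0$; strict inequality in the restriction (and automatically whenever $v_1\le 0$) yields $g(\bm{U}')>0$, so together with $D'>0$ we conclude $\bm{U}'\in\mathcal{G}$.

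I do not expect a serious obstacle here: once the identities $S=E-g(\bm{U})$ and $m_1=(E+p)v_1$ are invoked, the claim reduces to a one-line algebraic equivalence between the constraint $g(\bm{U}')>0$ and the given time-step bound. The only points that genuinely require care are confirming $1-\kappa>0$ so that the square root factors with the correct sign (otherwise $|1-\kappa|$ would enter), and the mild boundary subtlety that the non-strict restriction formally yields $g(\bm{U}')\ge 0$ rather than strict positivity at the extreme admissible time step.
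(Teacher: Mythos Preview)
Your proof is correct and complete; the paper itself does not prove this lemma but merely quotes it from \cite[Section 3.2]{2015High}, so your direct verification via the factorisation $(D',m_1',m_2')=(1-\kappa)(D,m_1,m_2)$ together with the identities $S=E-g(\bm{U})$ and $m_1=(E+p)v_1$ is exactly the argument that underlies the cited result. Your observation about the boundary case $g(\bm{U}')\ge 0$ versus strict positivity is also apt: the lemma as stated with a non-strict time-step bound technically only guarantees $\bm{U}'\in\overline{\mathcal{G}}$, which is how such estimates are routinely used in the subsequent CFL analysis.
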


Assume $\beta$ is a positive number, then we have
\begin{equation*}
	\begin{aligned}
		&\bm{\overline{U}}_{i,j}+\Delta t(\mathcal{L}_U(\bm{U}_h(t),i,j)+\overline{\bm{S}}_{i,j})\\
		=&(1-\beta)(\bm{\overline{U}}_{i,j}+\frac{\Delta t}{1-\beta}\mathcal{L}_U(\bm{U}(t),i,j))+\beta(\bm{\overline{U}}_{i,j}+\frac{\Delta t}{\beta}\overline{\bm S}_{i,j})\\
		=&(1-\beta) \left(\bm{\overline{U}}_{i,j}+\frac{\Delta t}{1-\beta}\mathcal{L}_U(\bm{U}(t),i,j)\right)+\beta\left(
		\sum_{\ell=1}^{3}\sum_{k=1}^{3}
		\omega_\ell\omega_k\left(\bm{U}_{i\oplus b_\ell,j\oplus b_k}+\frac{\Delta t}{\beta}\bm{S}\left(\bm{U}_{i\oplus b_\ell,j\oplus b_k}\right)\right)
		\right).
	\end{aligned}
\end{equation*}
By using Theorem \ref{thm:2D pcp}, Lemma \ref{2D pcp inequality}, Lemma \ref{source pcp}, and the convexity of ${\mathcal{G}}$, one can deduce that the HWENO scheme \eqref{2Dscheme with source} preserves $$\bm{\overline{U}}_{i,j}+\Delta t(\mathcal{L}_U(\bm{U}_h(t),i,j)+\overline{\bm{S}}_{i,j})\in {\mathcal{G}}$$  
under the time step restriction
\begin{equation}\label{key212}
\frac{\Delta t}{1-\beta}\leq \frac{\hat{\omega}_1 }{  \alpha_1/\Delta x+ \alpha_2/\Delta y}, \qquad 
v_1(\bm{U}_{i\oplus b_\ell,j\oplus b_k})  \frac{\Delta t}{\beta} \le \frac{(r_i+b_\ell\Delta r)g(\bm{U}_{i\oplus b_\ell,j\oplus b_k})}{\left(p(\bm{U}_{i\oplus b_\ell,j\oplus b_k})+g(\bm{U}_{i\oplus b_\ell,j\oplus b_k})\right)}.
\end{equation}
Taking a special $\beta$ leads to the following time step restriction 
\begin{equation*}
	\Delta t\leq \beta A_s,
\end{equation*}
where 
\begin{align*}
	A_s&=\displaystyle \min_{i,j}\left\{\min_{ (\ell,k) \in \Lambda_{ij} }\left \{
	\frac{(r_i+b_\ell\Delta r)g(\bm{U}_{i\oplus b_\ell,j\oplus b_k})}{\left(p(\bm{U}_{i\oplus b_\ell,j\oplus b_k})+g(\bm{U}_{i\oplus b_\ell,j\oplus b_k})\right)v_1(\bm{U}_{i\oplus b_\ell,j\oplus b_k})}\right 
	\}\right\},\\
	\beta&=\frac{\hat{\omega}_1}{\hat{\omega}_1+A_s\left (\alpha_1/\Delta x+ \alpha_2/\Delta y \right )}, \qquad   \Lambda_{ij} :=\left\{ (\ell,k):  
	v_1(\bm{U}_{i\oplus b_\ell,j\oplus b_k}) > 0 \right\}.
\end{align*}
Again, the PCP property remains valid for the fully discrete HWENO scheme if the SSP Runge--Kutta method is used for time discretization.

%
%

\section{Numerical tests}\label{sec:numerical_tests}
This section will conduct several ultra-relativistic numerical experiments, to demonstrate the accuracy, robustness, and effectiveness of the proposed PCP finite volume HWENO schemes and NR methods. The CFL number is set as 0.6, and unless otherwise specified, the adiabatic index $\gamma$ is set as $5/3$. 
 
\subsection{Numerical experiments on primitive variables recovering algorithms}

We conduct several tests to evaluate the accuracy and efficiency of the three proposed quadratically convergent PCP primitive variable recovering algorithms (the target accuracy $\epsilon_{target}$ is set as $10^{-14}$), by comparing them with two existing algorithms from the literature, including the hybrid linearly convergent PCP algorithm introduced in \cite{chen2022physical} (which we refer to as ``Hybrid-linear'') and the velocity-proxy-based  
recovery algorithm from \cite{riccardi2008primitive} (which we refer to as ``Vel-Proxy''). 
All the tests are implemented by the C++ language with double precision, and performed with one core on the same Windows environment with Intel(R) Core(TM) i5-7300HQ CPU @ 2.50GHz.

\begin{exa} [Random tests] Three sets of random tests are provided to validate the accuracy and efficiency of our NR methods presented in Section \ref{sec:Pressure recovering algorithms}. 
	Let $U_{\tt rand}$ denote the uniform random variables independent generated in $[0,1]$. 
	The first set of random tests involve the mild primitive variables:
	\begin{eqnarray}
		\begin{cases}
			\rho=1000 U_{\tt rand}+10^{-10},\\
			v=1.9999 U_{\tt rand} -1.9999/2,\\
			p=10 U_{\tt rand}+10^{-10},\\
			\gamma=1.0001+0.9999 U_{\tt rand}.
		\end{cases}
	\end{eqnarray}
	The second set of random tests involve more extreme primitive variables:
	\begin{eqnarray}
		\begin{cases}
			\rho= 10^{-3} U_{\tt rand} +10^{-10},\\
			v= 1.9999 U_{\tt rand}-1.9999/2,\\
			p= 0.1 U_{\tt rand} +10^{-10},\\
			\gamma=1.0001+0.9999 U_{\tt rand}. 
		\end{cases}
	\end{eqnarray}
	The last set of tests involve large density and small velocities:
	\begin{eqnarray}
		\begin{cases}
			\rho=10000 U_{\tt rand}  +10^{-10},\\
			v=0.001 U_{\tt rand},\\
			p=10 U_{\tt rand} +10^{-10},\\
			\gamma=1.0001+0.9999 U_{\tt rand}.
		\end{cases}
	\end{eqnarray}
	In our experimental setup, we generate primitive variables randomly and use them to calculate the corresponding conservative variables $\bm{U}$ through (\ref{eq:intro_conver2conserv}). We then apply the primitive variables recovering algorithms to recompute the primitive variables from $\bm{U}$. The results of our tests, which consist of $10^8$ independent random experiments, are presented in Tables \ref{table:recovering algorithms tests1}--\ref{table:recovering algorithms tests3}. These tables provide information on the total running time, maximum errors in $p$, total errors in $p$, average iteration numbers, and total numbers of negative $p$ appearing in iterations. 
	We observe that no negative pressure is produced, confirming the PCP property. 
	Our experimental results indicates that the proposed hybrid NR method exhibits superior performance in terms of speed, robustness, efficiency, and accuracy across all tests. As a result, it can be regarded as the most effective algorithm overall.

	\begin{table}[!htb]
		\centering
		\captionsetup{font=small}
		\caption{The first set of random tests: CPU time, maximum errors, average errors, average iterations and negative numbers in $10^8$ independent random experiments.}\label{table:recovering algorithms tests1}
		
		\begingroup
		\setlength{\tabcolsep}{8pt} 
		\renewcommand{\arraystretch}{1.3} 
		
		\centering
		
		\small
		\begin{tabular}{cccccc}
			\hline
			algorithms & total time (s) & max error  & average error & average iteration & negative number \\ \hline
			Hybrid-linear         & 120.152        & 1.49E-07    & 9.19E-14     & 13.4924            & 0               \\
			NR-I     & 36.946          & 1.81E-04 & 6.66E-12   & 4.25424            & 0               \\
			NR-II     & 60.776          & 1.75E-09    & 9.18E-14     & 4.51075            & 0               \\
			Hybrid NR  & 38.775         & 1.15E-09    & 6.57E-14     & 3.95072            & 0               \\
			Analytical  & 367.549         & 2.39E-00    & 1.57E-06     & -            & 0               \\ 
			Vel-Proxy & 62.906         & 1.25E-09    & 7.36E-14     & 5.68068            & 0               \\ \hline
		\end{tabular}
		
		\endgroup
		
	\end{table}

	\begin{table}[!htb]
		\centering
		\captionsetup{font=small}
		\caption{The second set of random tests: CPU time, maximum errors, average errors, average iterations and negative numbers in $10^8$ independent random experiments.}\label{table:recovering algorithms tests2}
		
		\begingroup
		\setlength{\tabcolsep}{8pt} 
		\renewcommand{\arraystretch}{1.3} 
		
		\centering
		
		\small
		\begin{tabular}{cccccc}
			\hline
			& total time (s) & max error & average error & average iteration & negative number \\ \hline
			Hybrid-linear         & 120.075        & 2.51E-07   & 2.44E-14  & 13.4426            & 0               \\
			NR-I     & 79.845         & 2.46E-06   & 3.86E-13     & 13.1856            & 0               \\
			NR-II     & 49.493         & 2.10E-11   & 1.07E-16     & 3.20572            & 0               \\
			Hybrid NR  & 49.734         & 2.10E-11   & 1.07E-16     & 3.25100            & 0               \\ 
			Analytical  & 369.785         & 3.43E-02   & 4.24E-08     & -            & 0               \\
			Vel-Proxy  & 62.420         & 1.70E-11    & 1.26E-16     & 5.69168            & 0               \\ \hline
		\end{tabular}
		
		\endgroup
		
	\end{table}
	
	\begin{table}[!htb]
		\centering
		\captionsetup{font=small}
		\caption{The third set of random tests: CPU time, maximum errors, average errors, average iterations and negative numbers in $10^8$ independent random experiments.}\label{table:recovering algorithms tests3}
		
		\begingroup
		\setlength{\tabcolsep}{8pt} 
		\renewcommand{\arraystretch}{1.3} 
		
		\centering
		
		\small
		\begin{tabular}{cccccc}
			\hline
			& total time (s) & max error & average error & average iteration & negative number \\ \hline
			Hybrid-linear         & 33.876        & 6.74E-12   & 8.96E-14  & 2.07067            & 0               \\
			NR-I     & 35.504         & 2.80E-04   & 6.53E-12     & 3.8696            & 0               \\
			NR-II     & 58.742         &  6.00E-12  &  8.24E-14    & 4.10408            & 0               \\
			Hybrid NR  & 37.629         & 5.08E-12   & 7.14E-14     & 3.74777            & 0               \\
			Analytical  & 367.152         & 6.51E-04   & 6.60E-10     & -            & 0               \\
			Vel-Proxy  & 56.883         & 5.91E-12    & 8.51E-14     & 4.99994            & 0               \\ \hline
		\end{tabular}
		
		\endgroup
		
	\end{table}

\end{exa}

\subsection{One-dimensional examples}

\begin{exa} [1D accuracy test]\label{test:1Dsmooth} This is an ultra-relativistic smooth problem that serves the purpose of testing the accuracy of the 1D PCP HWENO scheme. The initial condition is given as
\begin{equation*}
	\bm{Q}(x,0)=\left(1+0.99999\sin(x),0.9999,0.0001\right)^T, x\in[0,2\pi).
\end{equation*}
Due to the low density, large velocity close to the speed of light, and low pressure, this test is challenging, and  the PCP limiting produce is necessary for successful simulation. 
We perform this test on the mesh of $N$ uniform cells with $N \in \{30,60,90,\dots,180\}$. 
Table \ref{table:1D accuracy test} lists the numerical errors of the rest-mass density $\rho$ and the convergence rates in $L^1$, $L^2$ and $L^{\infty}$ norms at time $t=2\pi$. 
The results indicate that the 1D PCP HWENO scheme achieves sixth-order accuracy, which is not destroyed by the PCP limiter.
	\begin{table}[!htbp]
		\centering
		
		\captionsetup{font=small}
		\caption{ Example \ref{test:1Dsmooth}: Numerical errors in $L^1$, $L^2$ and $L^{\infty}$ norms and the corresponding convergence rates.}\label{table:1D accuracy test}
		
		\begingroup
		\setlength{\tabcolsep}{8pt} 
		\renewcommand{\arraystretch}{1.3} 
		
		\centering
		
		\small
		\begin{tabular}{ccccccc}
			\hline
			$N$ & $L^1$ error     & Order    & $L^2$ error     & Order    & $L^{\infty}$ error   & Order  \\ \hline
			30  & 3.56E-01 & -       & 7.14E-01 & -       & 2.35E-00 & -       \\
			60  & 8.40E-06 & 15.37 & 2.85E-05 & 14.61 & 1.82E-04 & 13.66 \\
			90  & 1.11E-07 & 10.68 & 1.23E-07 & 13.43 & 1.74E-07 & 17.15 \\
			120 & 1.97E-08 & 6.00 & 2.19E-08 & 6.00 & 3.10E-08 & 6.00 \\
			150 & 5.17E-09 & 5.99 & 5.75E-09 & 5.99  & 8.21E-09 & 5.95 \\
			180 & 1.74E-09 & 5.97 & 1.94E-09 & 5.97 & 2.82E-09 & 5.86  \\ \hline
		\end{tabular}

		\endgroup
		
\end{table}
\end{exa}

\begin{exa} [1D Riemann problem]\label{test:1Driemann} The second test problem is a Riemann problem with the following initial data: 
	\begin{equation}
		\bm{Q}(x,0) = \begin{cases}
			(10,0,40/3)^\top,~~&x\leq0.5,\\
			(1,0,10^{-6})^\top,~~&x>0.5.
		\end{cases}
	\end{equation}
This initial discontinuity results in a left-moving shock wave, a right-moving contact discontinuity, and a right-moving shock wave. Figure \ref{fig:1Driemann} presents the numerical results at $t=0.4$, obtained by the PCP HWENO scheme with 320 uniform cells in the computational domain $[0,1]$.  
We see that the waves are well captured without any oscillation, and the numerical solution agrees well with the exact solution. 
The PCP limited cells from $t=0$ to $t=0.4$ are also displayed in Figure \ref{fig:1Driemann}, showing that only two cells are limited during the simulation.

\begin{figure}[!htb]
	\centering
	\subfloat[$\rho$]{
		\includegraphics[width=0.45\textwidth]{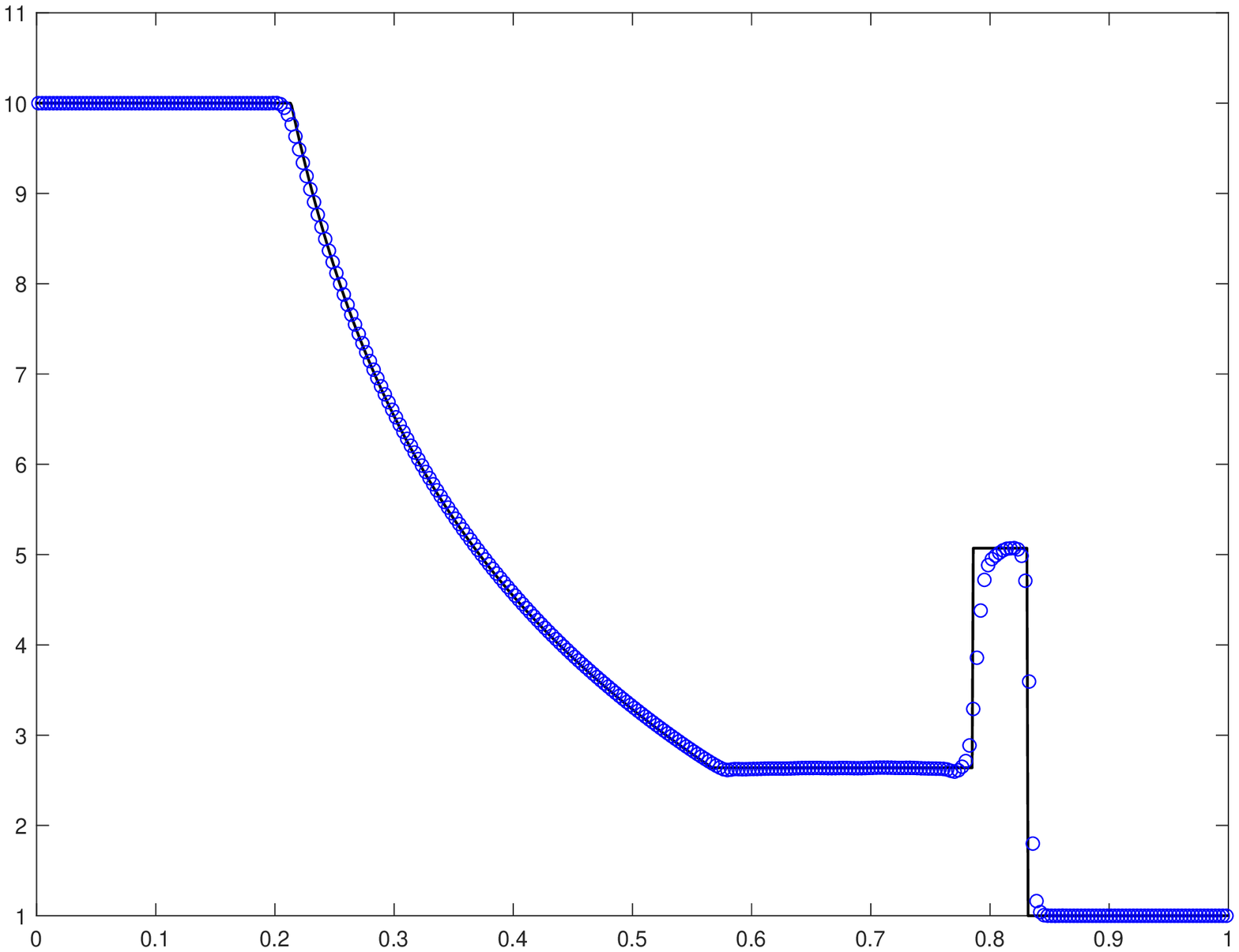}\label{fig:1Driemann_rho}
	}
	\subfloat[$v_1$]{
		\includegraphics[width=0.45\textwidth]{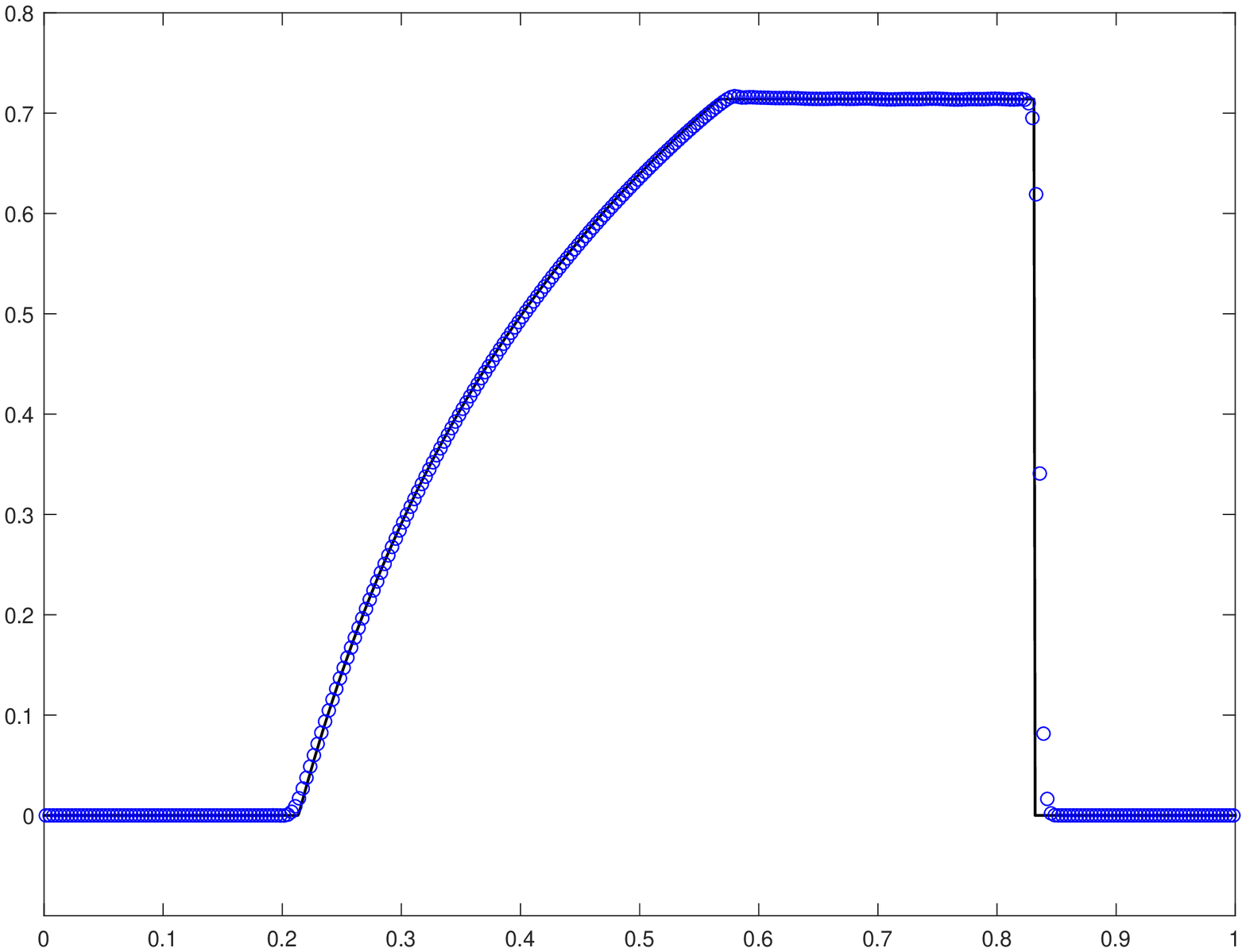}\label{fig:1Driemann_v1}
	}

	\subfloat[$p$]{
		\includegraphics[width=0.45\textwidth]{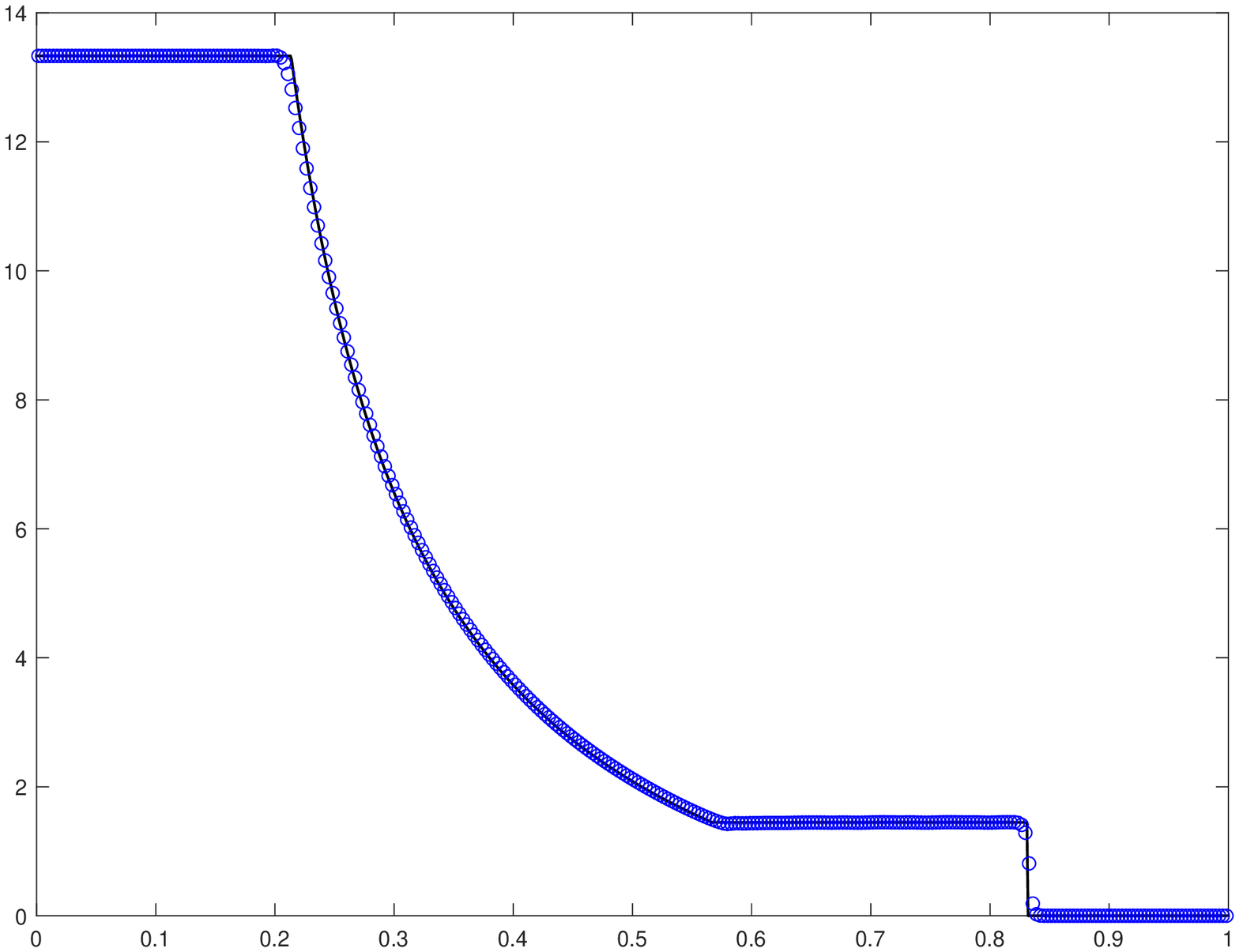}\label{fig:1Driemann_p}
	}
	\subfloat[PCP limited cells]{
		\includegraphics[width=0.45\textwidth]{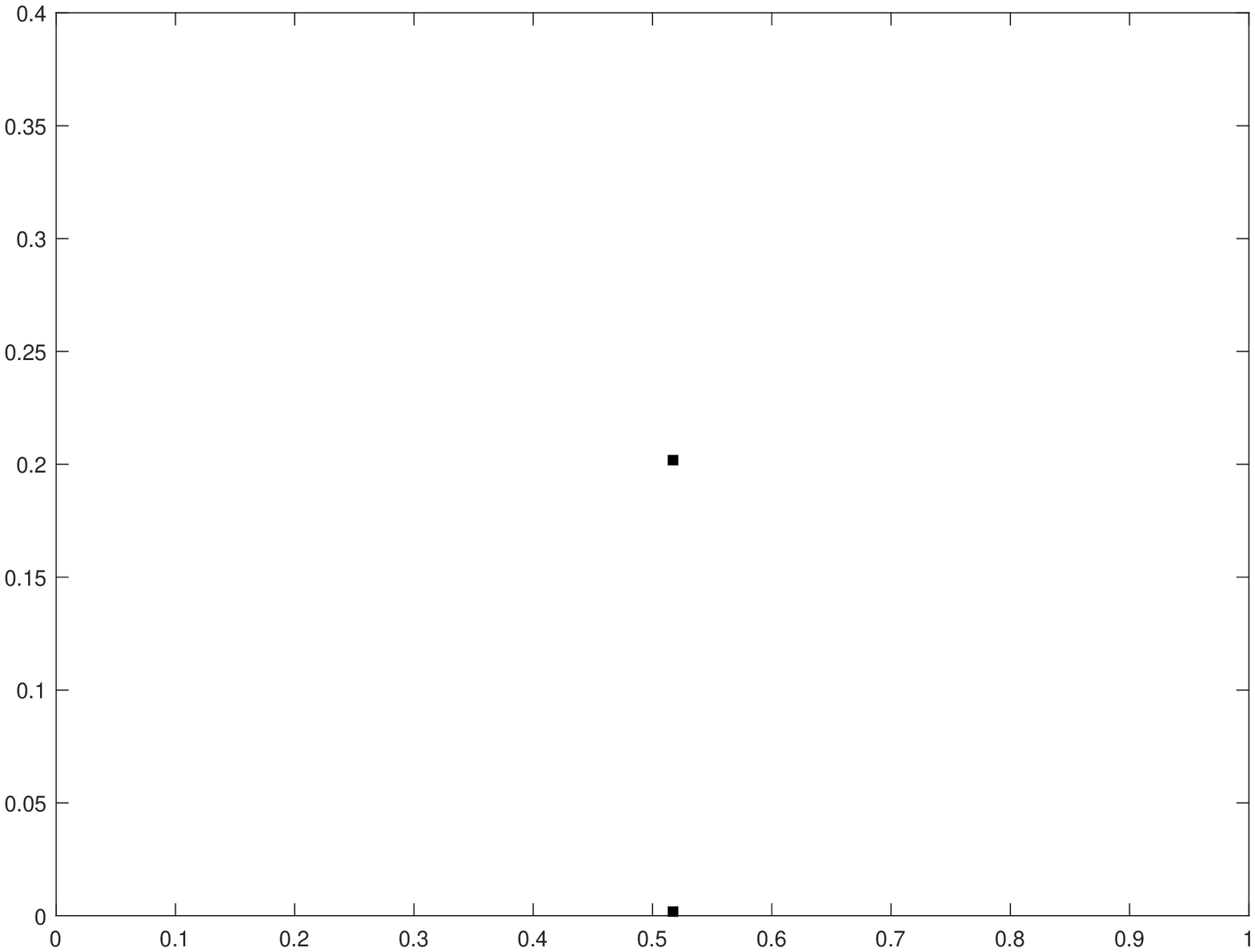}\label{fig:1Driemann_troublecell}
	}

	\captionsetup{font=small}
	\caption{Example \ref{test:1Driemann}. The numerical solution (symbols ``$\circ$") and exact solution (solid lines) of density $\rho$, velocity $v_1$, and pressure $p$.  The PCP limited cells are also displayed.}\label{fig:1Driemann}
\end{figure}

\end{exa}

\begin{exa} [Shock heating problem]\label{test:shockheating} This example simulates the shock heating problem, has become a standard test for evaluating the ability of numerical schemes to handle strong shocks without generating excessive postshock oscillations. The initial data in the computational domain $\left[0,1\right]$ is given as 
\begin{equation}
	\bm{Q}(x,0) =\left(1,1-10^{-10},\frac{10^{-4}}{3} \right)^\top,
\end{equation}
and the adiabatic index is take as $\gamma=4/3$. 
The proposed model involves a scenario in which a gas with rightward velocity close to the speed of light collides with a wall. Upon impact, the kinetic energy of the gas is converted into internal energy, resulting in compression and heating. As a result of this process, a strong shock wave is generated, which propagates towards the left at a velocity of $v_s=(\gamma-1)W_0|v_0|/(W_0+1)$. Here, $v_0=1-10^{-10}$ represents the initial velocity of the gas, while $W_0$ denotes the corresponding Lorentz factor. The gas behind the shock wave comes to a rest and possesses a specific internal energy of $W_0-1$, as deduced through energy conservation across the wave. The compression ratio across the shock is given by $\sigma=(\gamma+1)/(\gamma-1)+(\gamma/(\gamma-1))(W_0-1)$.

To evaluate the necessity of the PCP limiter, we perform the simulation without using this limiter and observe that the code breaks down after only one time step. We then apply the PCP limiter and plot the results at time $t=2$ in Figure \ref{fig:shockheating}, which also displays the cells where the PCP limiter is activated from  $t=0$ to $2$. We observe that the PCP limiter is only activated in a few cells near the moving shock.

	\begin{figure}[!h]
		\centering
		\subfloat[$\rho$]{
			\includegraphics[width=0.45\textwidth]{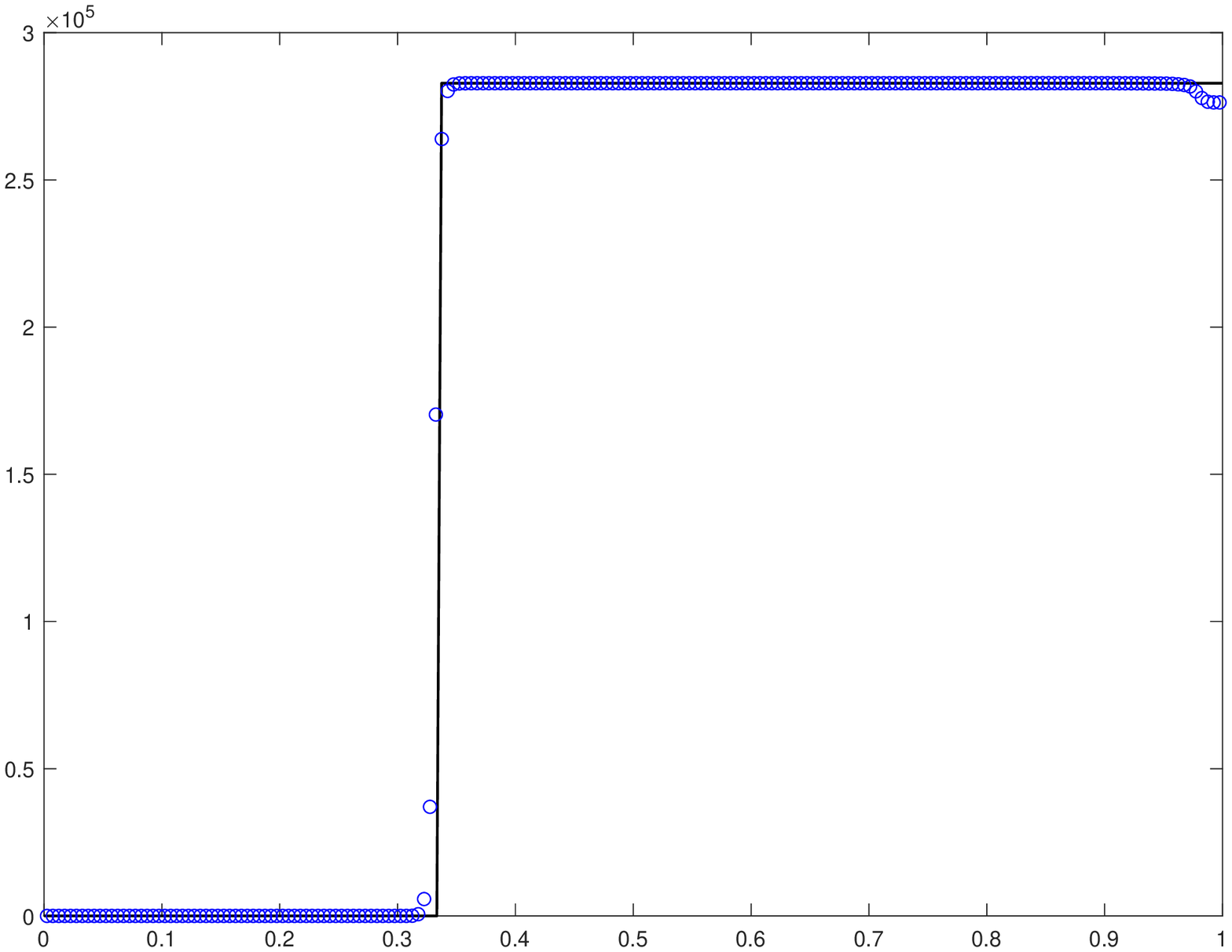}\label{fig:1Dshockheating_rho}
		}
		\subfloat[$v_1$]{
			\includegraphics[width=0.45\textwidth]{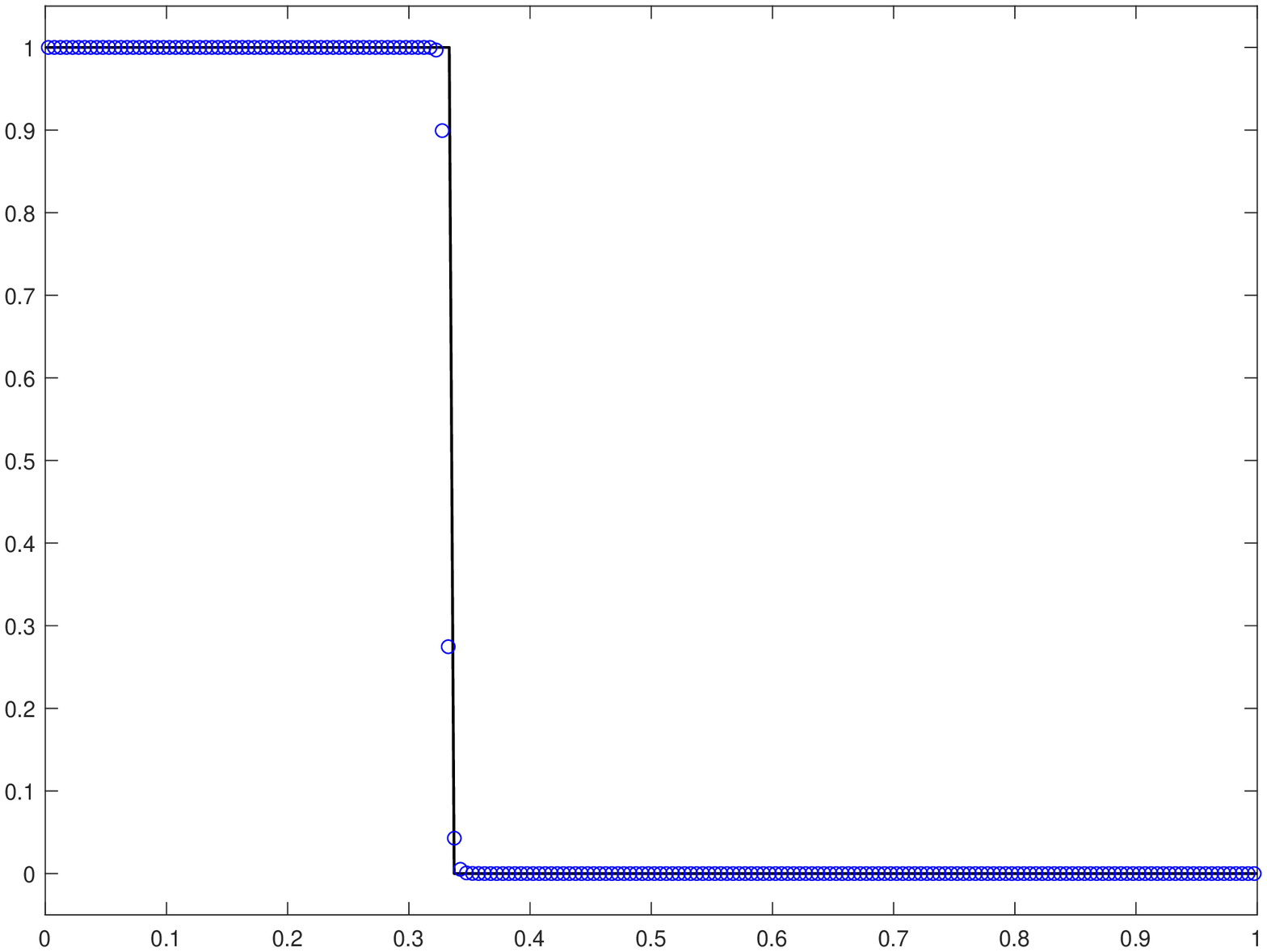}\label{fig:1Dshockheating_v1}
		}
		
		\subfloat[$p$]{
			\includegraphics[width=0.45\textwidth]{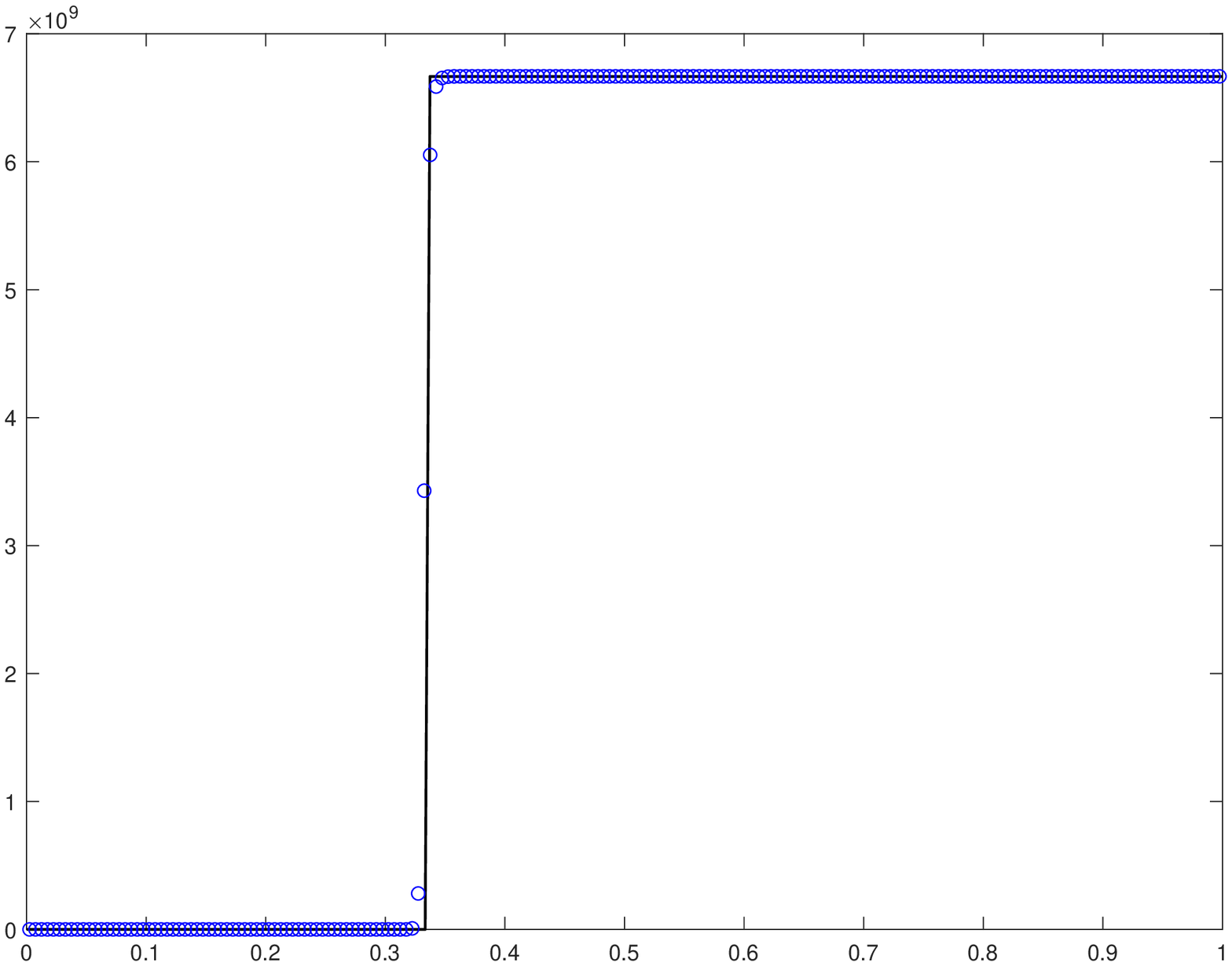}\label{fig:1Dshockheating_p}
		}
		\subfloat[PCP limited cells]{
			\includegraphics[width=0.45\textwidth]{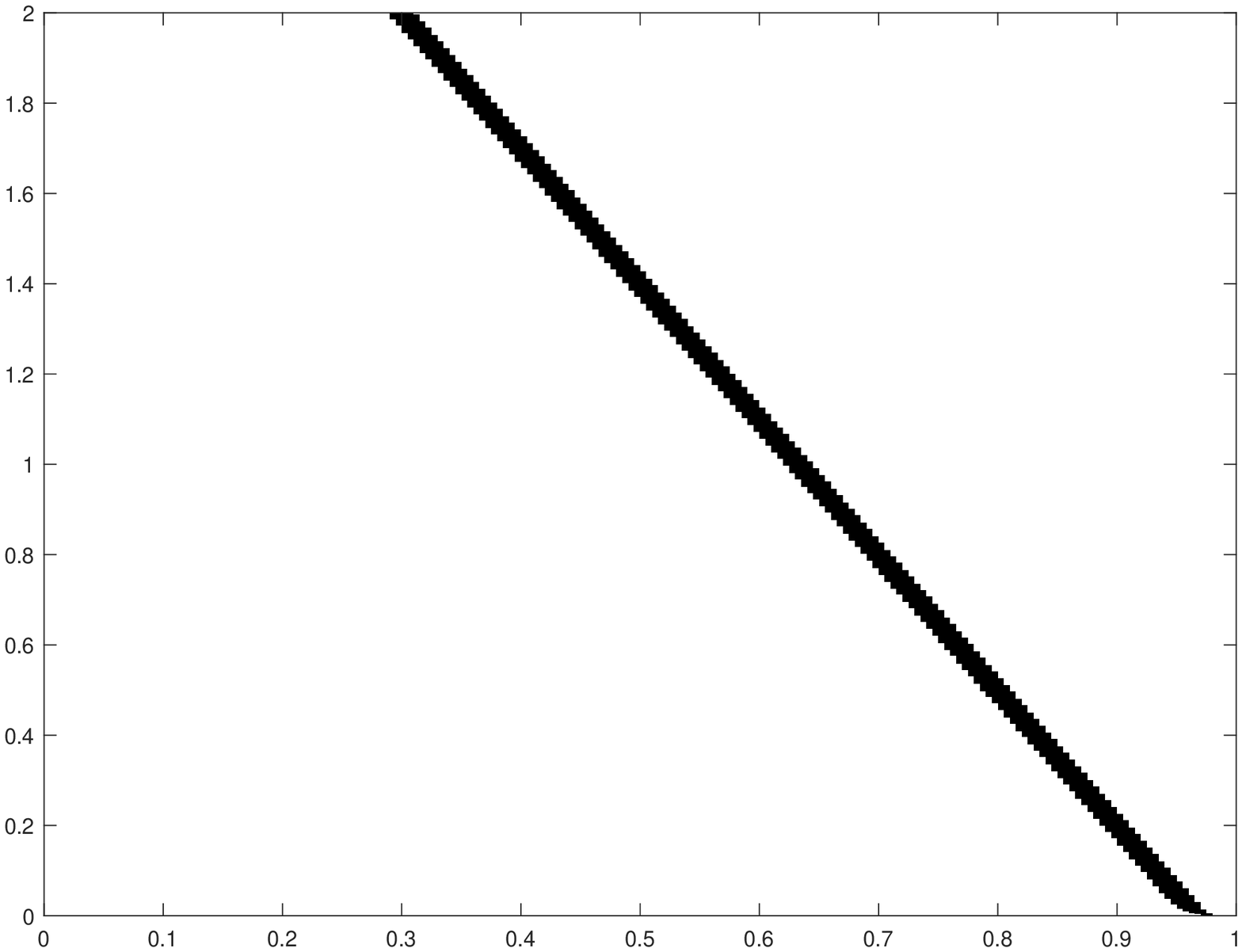}\label{fig:1Dshockheating_limitcell}
		}
	
		\captionsetup{font=small}
		\caption{Example \ref{test:shockheating}. The numerical solution (symbols ``$\circ$") and exact solution (solid lines) of density $\rho$, velocity $v_1$, and pressure $p$ at $t=2$ as well as the PCP limited cells.}\label{fig:shockheating}
	\end{figure}
	
\end{exa}

\subsection{Two-dimensional examples}

\begin{exa} [2D smooth problem]\label{test:2Dsmooth} 
	This example considers a 2D smooth problem 
	in the domain $\Omega=[0,2/\sqrt{3}]\times [0,2]$ with the initial data
	\begin{eqnarray}
		\bm{Q}=\left(1+0.999 \sin[2\pi(x\cos\alpha+y\sin\alpha)],\frac{0.9}{\sqrt{2}},\frac{0.9}{\sqrt{2}},0.01\right)^\top, 
	\end{eqnarray}
	where $\alpha=\pi/6$.  
	Due to the low density, large velocity close to the speed of light, and low pressure, the PCP limiting produce is necessary for successful simulation of this problem. 
	The simulations are performed on the meshes of $N \times N$ uniform cells with varied $N\in \{30,60,90,\dots,180\}$.  
	Table \ref{table:2D accuracy test} lists the numerical errors of the rest-mass density $\rho$ and the convergence rates in $L^1$, $L^2$ and $L^{\infty}$ norms at time $t=0.05$. 
	The results indicate that the 2D PCP HWENO scheme achieves fifth-order accuracy, which is not destroyed by the PCP limiter.


		\begin{table}[!htbp]
		\centering
		
		\captionsetup{font=small}
		\caption{ Example \ref{test:2Dsmooth}: Numerical errors in $L^1$, $L^2$ and $L^{\infty}$ norms and the corresponding convergence rates.}\label{table:2D accuracy test}
		
		\begingroup
		\setlength{\tabcolsep}{8pt} 
		\renewcommand{\arraystretch}{1.3} 
		
		\centering
		
		\small
\begin{tabular}{ccccccc}
	\hline
	$N$ & $L^1$ error     & Order    & $L^2$ error     & Order    & $L^{\infty}$ error   & Order  \\ \hline
	
	$30$  & 1.44E-03 & -       & 4.14E-03 & -       & 2.46E-02 & -       \\
	
	$60$  & 4.53E-06 & 8.31 & 1.79E-05 & 7.86 & 1.30E-04 & 7.56 \\
	
	$90$  & 1.18E-10 & 26.02 & 1.32E-10 & 29.15 & 1.86E-10 & 33.19 \\
	
	$120$ & 2.54E-11 & 5.35 & 2.83E-11 & 5.35 & 4.00E-11 & 5.34 \\
	
	$150$ & 7.97E-12 & 5.20 & 8.85E-12 & 5.20  & 1.27E-11 & 5.14 \\
	
	$180$ & 3.16E-12 & 5.08 & 3.51E-12 & 5.08 & 5.25E-12 & 4.85  \\ \hline
\end{tabular}
		
		\endgroup
		
	\end{table}

	
\end{exa}

\begin{exa} [2D Riemann problem I]\label{test:2Driemann1} The use of 2D Riemann problems as benchmark tests has become widespread to evaluate the ability of a scheme to capture complex 2D relativistic wave configurations. Both this and the next tests simulate 2D Riemann problems of the ideal relativistic fluid within the domain $[-0.5,0.5]^2$, which is divided into $400 \times 400$ uniform cells.
	
The initial conditions for this test are defined as follows: 
	\begin{equation*}
	\bm{Q}(x,y,0) = \begin{cases}
		(0.1,0,0,0.01)^\top,\quad&x>0,~y>0,\\
		(0.1,0.99,0,1)^\top,\quad&x<0,~y>0,\\
		(0.5,0,0,1)^\top,\quad&x<0,~y<0,\\
		(0.1,0,0.99,1)^\top,\quad&x>0,~y<0.
	\end{cases}
\end{equation*}	
Figure \ref{fig:2Driemann1} gives the contour of the density logarithm $\ln \rho$ and the cells where the PCP limiter is activated at $t=0.4$. It is shown in the figure that the initial discontinuities in the four regions cause two reflected curved shock waves and a complex mushroom structure. The details in structure are consistent with those reported in previous works \cite{2015High,chen2022physical}. We observe that there are only a few PCP limited cells near the two reflected curved shocks.

	\begin{figure}[!htb]
		\centering
		\subfloat[$ln~\rho$]{
			\includegraphics[width=0.45\textwidth]{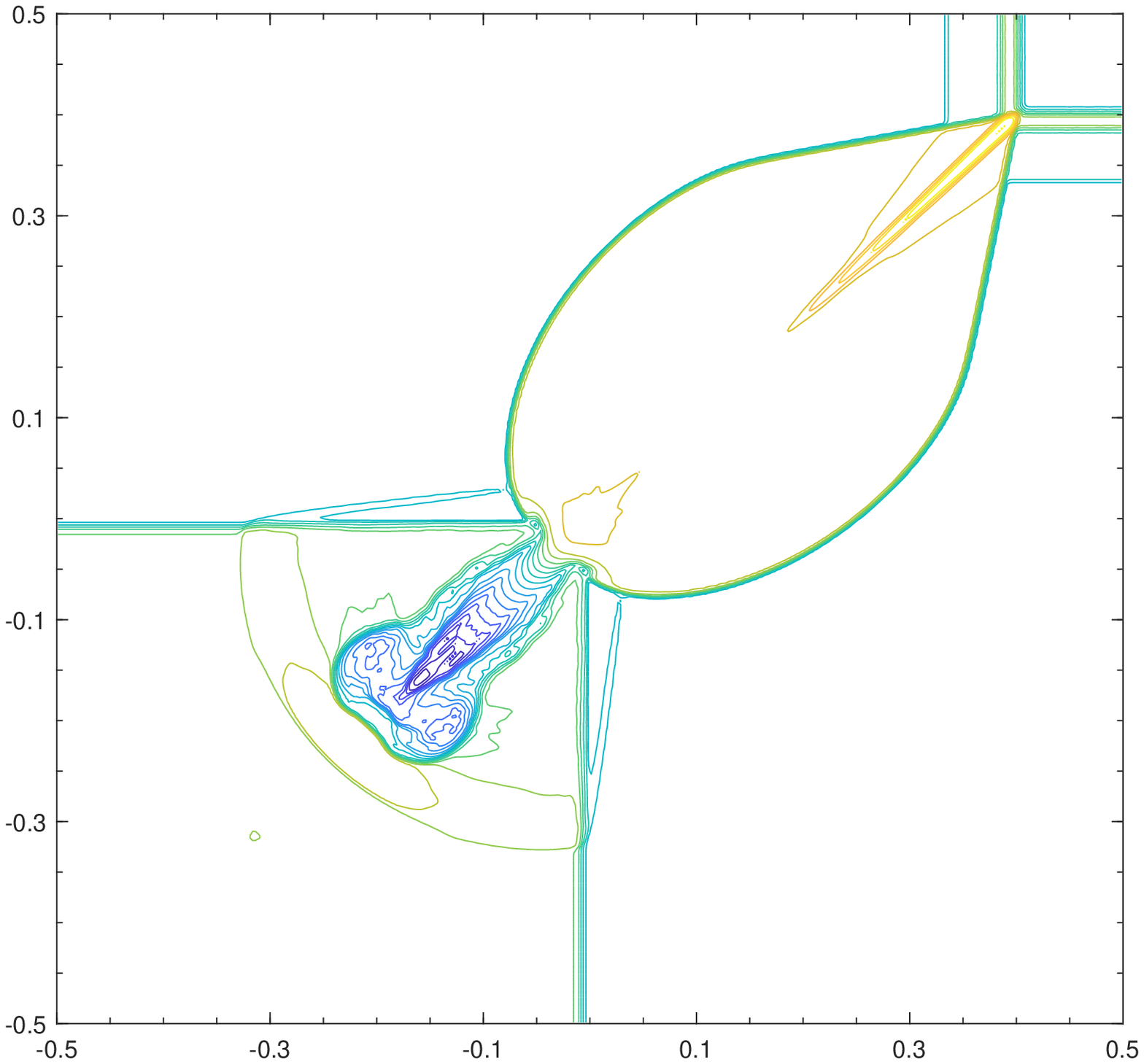}\label{fig:2Driemann4_logrho}
		}
		\subfloat[PCP limited cells]{
			\includegraphics[width=0.45\textwidth]{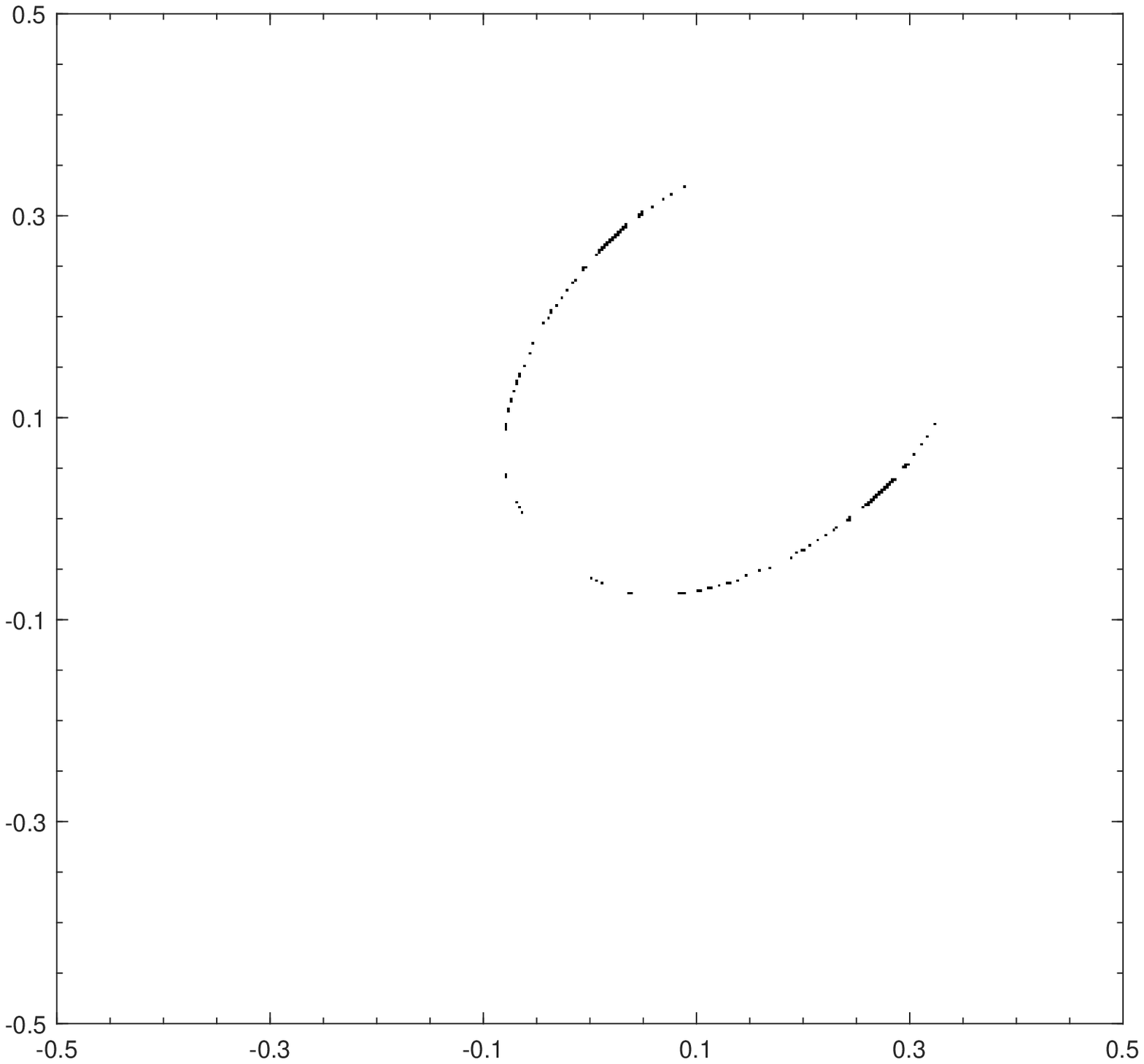}\label{fig:2Driemann4_limitcell}
		}
	
		\captionsetup{font=small}
		\caption{Example \ref{test:2Driemann1}.  The contours of the density logarithm $\ln \rho$ with 25 equally spaced contour lines from -6 to 1.9 within the domain $[0,1]^2$ and the PCP limited cells at $t=0.4$.}\label{fig:2Driemann1} 
	\end{figure}

\end{exa}

\begin{exa} [2D Riemann problem II]\label{test:2Driemann2} This example investigates a more ultra-relativistic 2D Riemann problem, which was first proposed in \cite{2015High}. The initial conditions are defined as  
	\begin{equation*}
		\bm{Q}(x,y,0) = \begin{cases}
			(0.1,0,0,20)^\top,~~&x>0,y>0,\\
			(0.00414329639576,0.9946418833556542,0,0.05)^\top,~~&x<0,y>0,\\
			(0.01,0,0,0.05)^\top,~~&x<0,y<0,\\
			(0.00414329639576,0,0.9946418833556542,0.05)^\top,~~&x>0,y<0.
		\end{cases}
	\end{equation*}
In this problem, the maximum initial velocity of fluid is larger than that in 2D Riemann problem I. We use the proposed PCP HWENO scheme to simulate the problem on a mesh of $400 \times 400$ uniform cells. Furthermore, we utilize this problem to demonstrate the importance of rescaling eigenvectors in characteristic decomposition. We compare two rescaling approaches discussed in Remark \ref{rem:eigvector}.

Figure \ref{fig:2Driemann2} shows the contours of the density logarithm $\ln \rho$ at $t=0.4$, obtained by our PCP HWENO scheme using three different methods: the ``unitization" rescaling approach, the ``matching" rescaling approach, and no rescaling. 
As we can see, 
the ``matching" rescaling approach exhibits the best performance, while the numerical solutions computed using the ``unitization" rescaling approach and without rescaling exhibit serious oscillations.

\begin{figure}[!htb]
	\centering
	\subfloat[$\ln \rho$: without rescaling]{
		\includegraphics[width=0.45\textwidth]{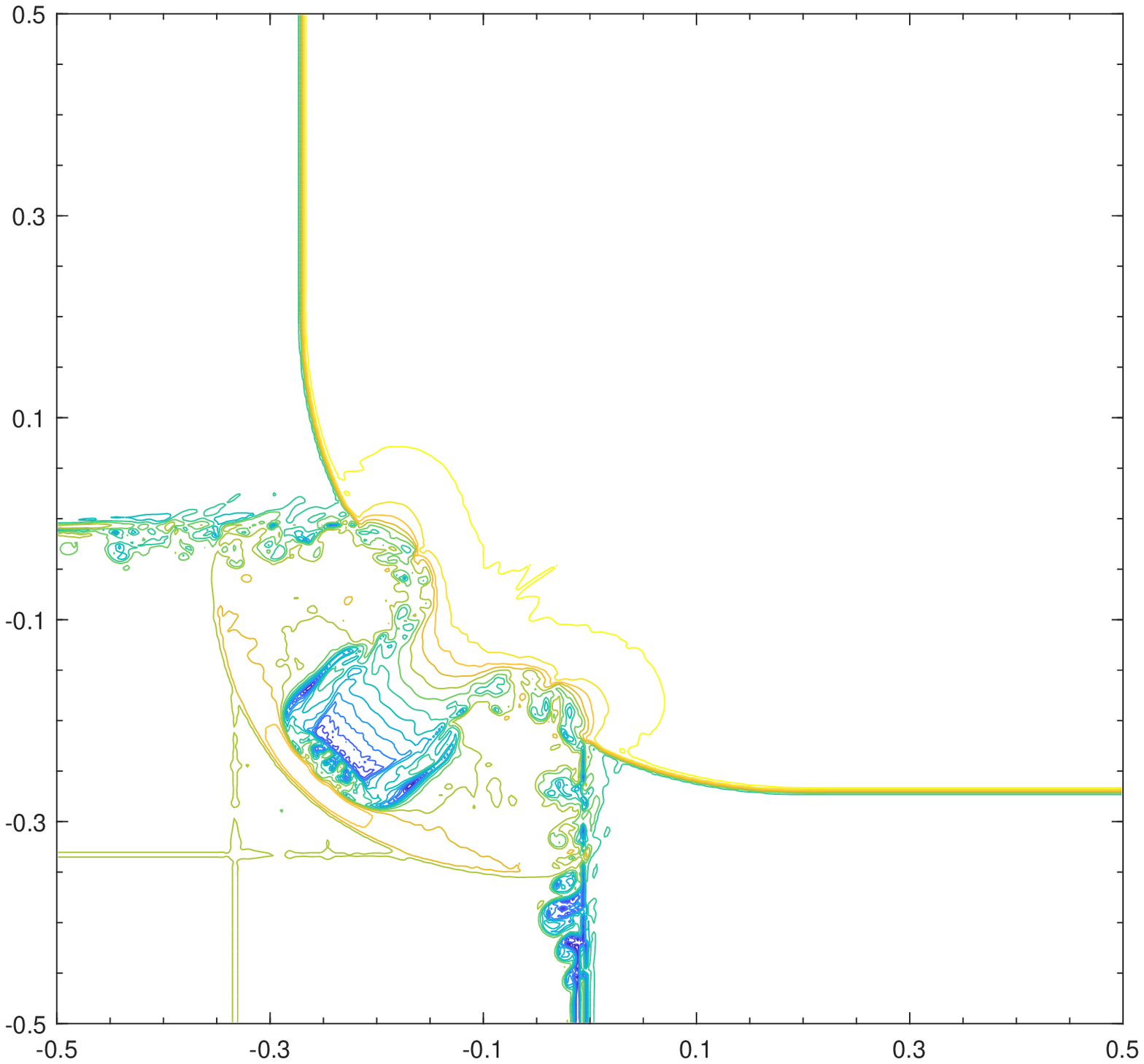}\label{fig:2Driemann5_logrho2}
	}
	\subfloat[$\ln \rho$: with ``unitization" rescaling]{
		\includegraphics[width=0.45\textwidth]{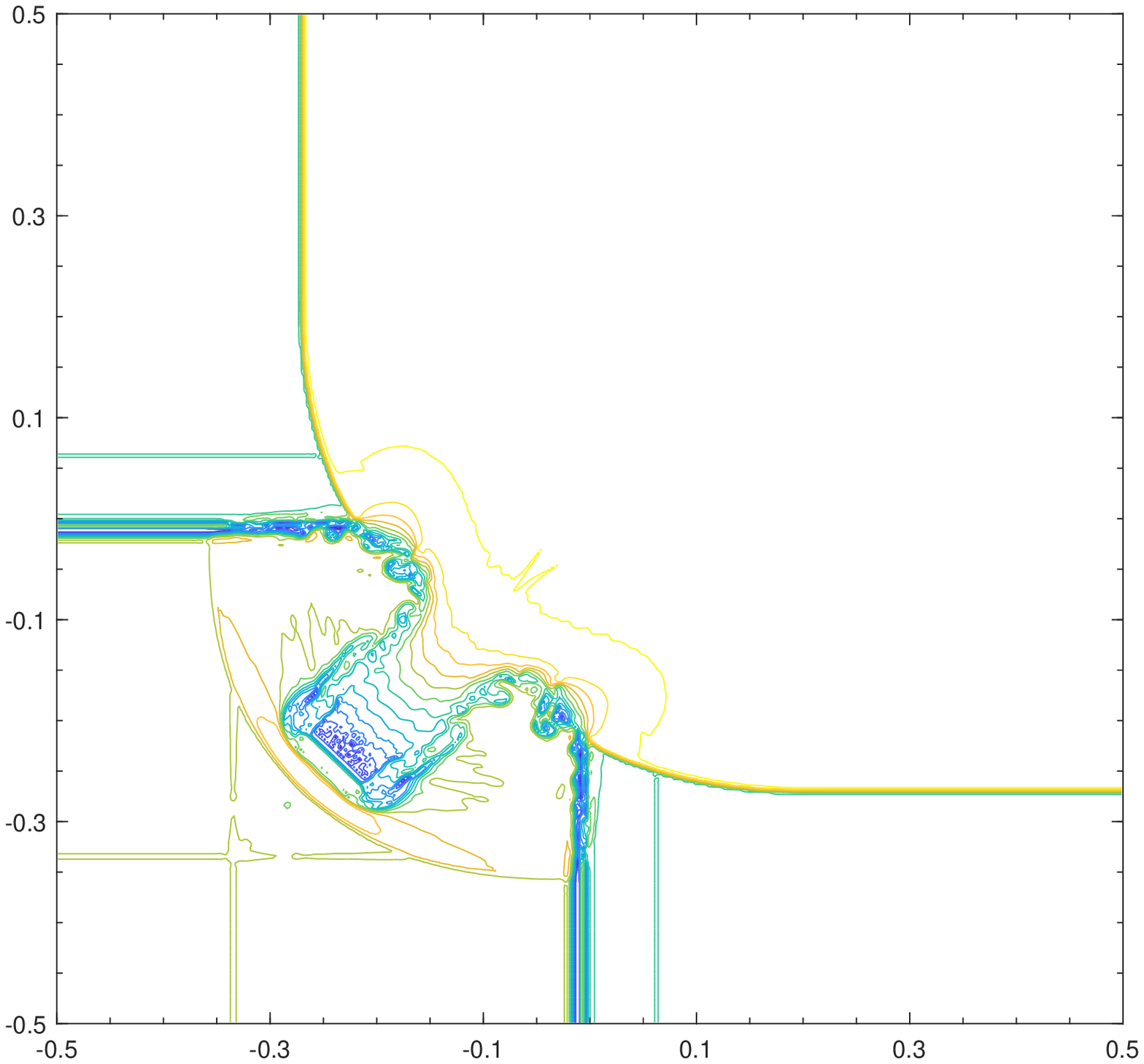}\label{fig:2Driemann5_logrho1}
	}

	\subfloat[$\ln \rho$: with ``matching" rescaling]{
		\includegraphics[width=0.45\textwidth]{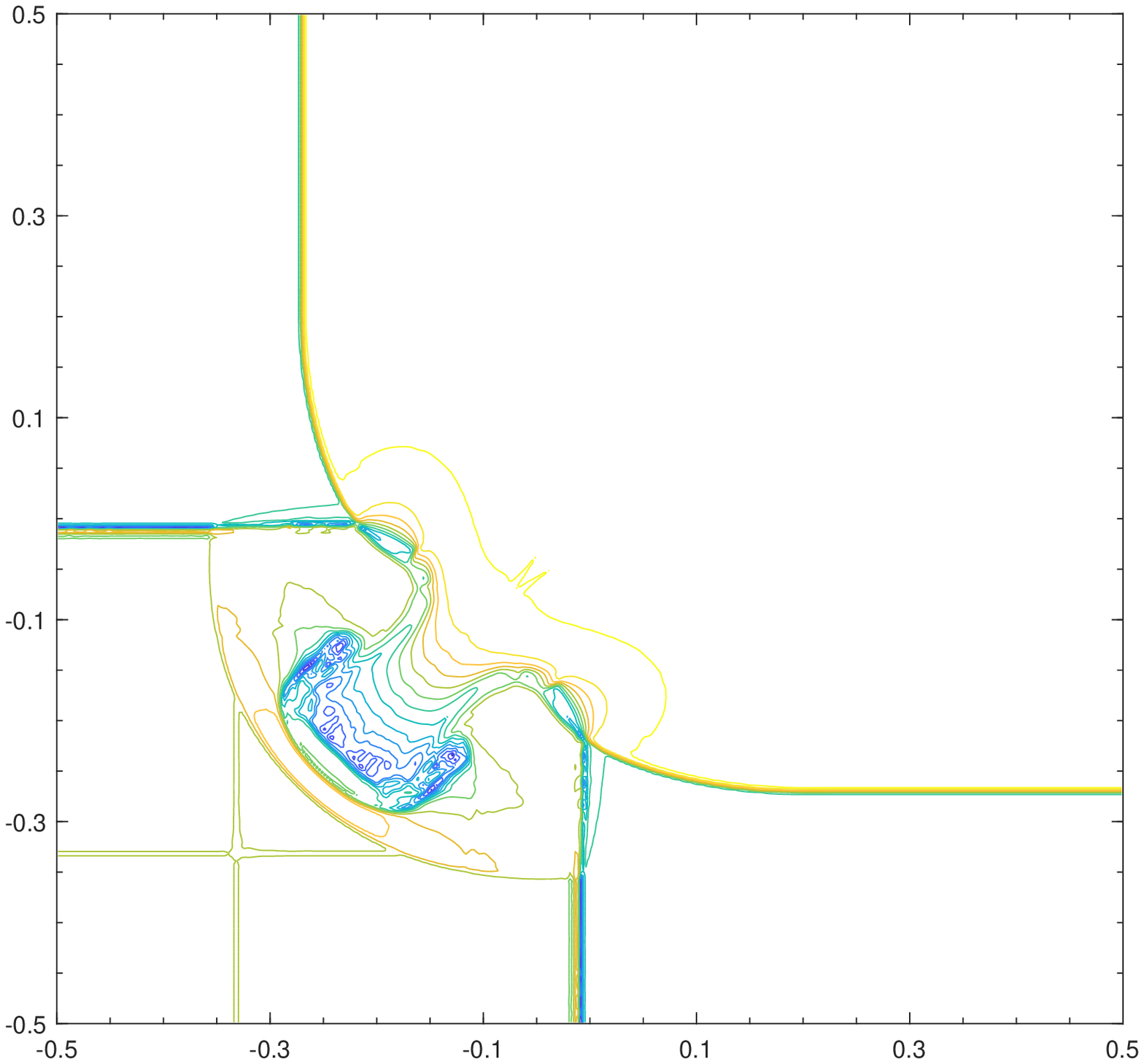}\label{fig:2Driemann5_logrho0}
	}
	\subfloat[PCP limited cells: with ``matching" rescaling]{
		\includegraphics[width=0.45\textwidth]{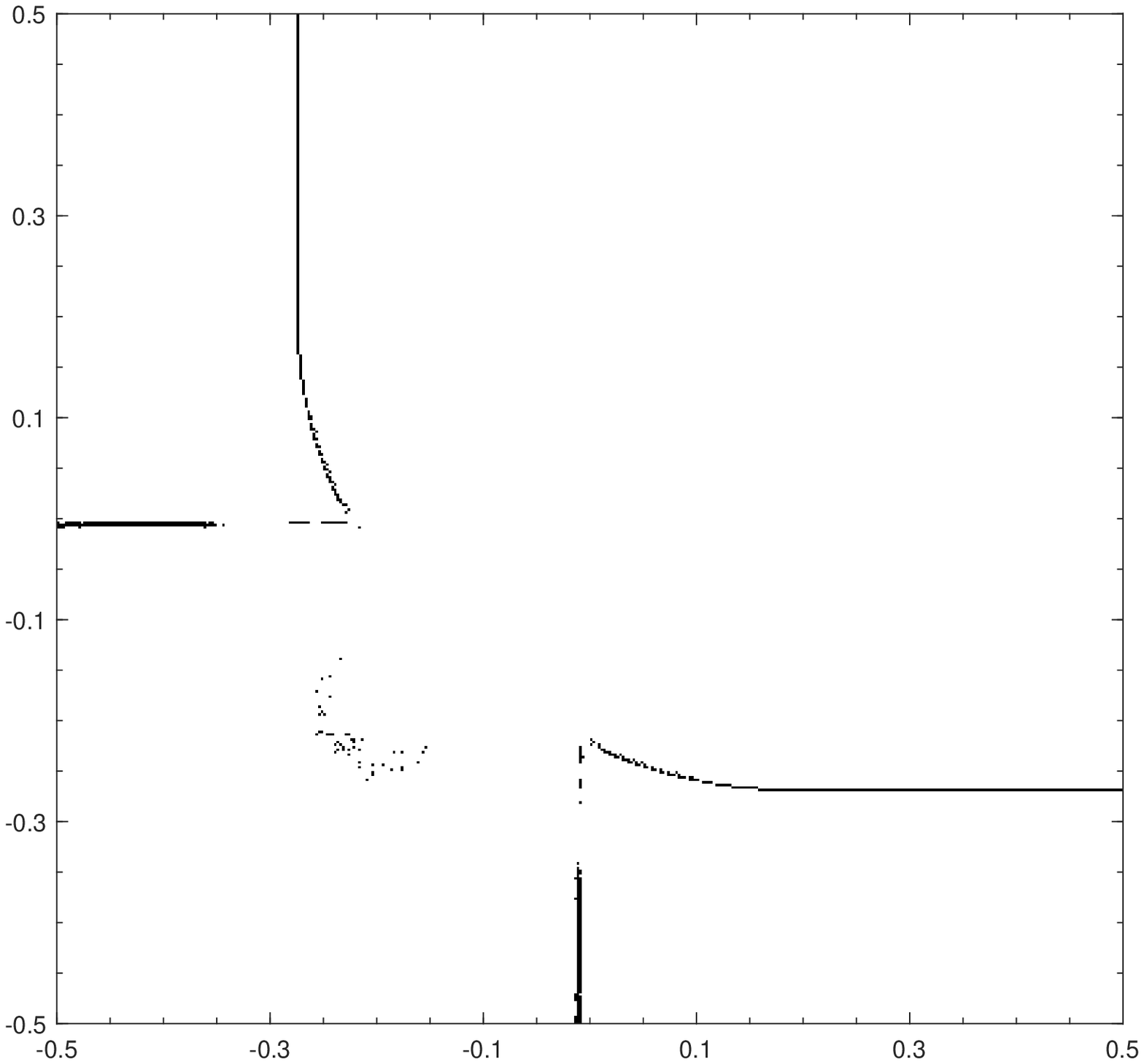}\label{fig:2Driemann5_limitcell_0}
	}

	\captionsetup{font=small}
	\caption{Example \ref{test:2Driemann2}\label{fig:2Driemann2}: The contours of the density logarithm $\ln \rho$ and the PCP limited cells at $t=0.4$ (25 equally spaced contour lines from -9 to 2).}
\end{figure}

\end{exa}

\begin{exa} [Shock-vortex interaction problems]\label{test:shockvortex interaction} 
	This example studies 
	the interaction of a vortex with a shock. Pao and Salas \cite{pao1981numerical} were the first to show this problem computationally in the non-relativistic case, while the special RHD case was studied in \cite{balsara2016subluminal,duan2019high}. In our case, we have set the velocity magnitude of the vortex as $w=0.9$ and the adiabatic index $\gamma$  as $1.4$. The initial rest-mass density and pressure are given by
	\begin{equation*}
		\rho(x,y)=(1-\alpha e^{1-r^2})^{\frac{1}{\gamma-1}}, ~~p=\rho^{\gamma}
	\end{equation*}
where
	\begin{equation*}
		\alpha=\frac{(\gamma-1)/ \gamma}{8\pi^2}\mathcal{\epsilon}^2,~~r=\sqrt{x_0^2+y_0^2},
	\end{equation*}
and $\epsilon$ represents the vortex strength. 
Using the Lorentz transformation, we can deduce that
	\begin{equation*}
		x_0=xW_w,~~y_0=y,~~W_w=\frac{1}{\sqrt{1-w^2}}.
	\end{equation*}
The initial velocities are given by 
	\begin{equation}
		v_1=\frac{v_1^0-w}{1-v_1^0w}, \qquad v_2=\frac{v_2^0}{W_w(1-v_1^0w)}, \nonumber
	\end{equation}
where
	\begin{equation}
		(v_1^0,v_2^0)=(-y_0,x_0)f,~~f=\sqrt{\frac{\beta}{1+\beta r^2}}, ~~\beta=\frac{2\gamma\alpha e^{1-r^2}}{2\gamma-1-\gamma\alpha e^{1-r^2}}.\nonumber
	\end{equation}
The computation domain is $\left[-17,3\right]\times\left[-5,5\right]$, which is divided into $800 \times 400$ uniform cells. 
	The initial vortex is centered at $(0,0)$, and there is a shock at $x=-6$ that far away from the vortex. The initial data in $x>6$ can be calculated by the vortex condition above, and the post-shock state in $x<6$ is given by 
	\begin{equation*}
		\bm{Q}(x,y,0)=(4.891497310766981,-0.388882958251919,0,11.894863258311670)^\top.
	\end{equation*}
	We apply inflow and outflow boundary conditions at the right and left boundaries of the domain, respectively, and reflection boundary conditions are applied on the bottom and top boundaries.

	We test our scheme in two different vortex strengths:
	\begin{itemize}
		\item A mild vortex with ${\epsilon}=5$ as in \cite{duan2019high}.
		\item A demanding vortex with ${\epsilon}=10.0828$ as in \cite{chen2022physical}. In this case, the minimum rest-mass density and pressure are $7.8337\times 10^{-15}$ and $1.7847\times10^{-20}$, respectively. We observe that the HWENO code without the PCP limiter cannot run this challenging test for even one time step, demonstrating the importance of the PCP limiter.
	\end{itemize}
	Figures \ref{fig:shockvortex1} and \ref{fig:shockvortex2} show the schlieren images of $\log_{10}(1+\left | \nabla \rho\right |)$ and $\left | \nabla  p \right |$. The subtle structures in our results are in good agreement with those reported in \cite{duan2019high,chen2022physical}, validating the effectiveness of our proposed PCP HWENO schemes in capturing complex waves and shocks.
	
\begin{figure}[!htb]
	\centering
	\subfloat[$\log_{10}(1+\left | \nabla \rho\right |)$ at $t=0$]{
		\includegraphics[width=0.45\textwidth]{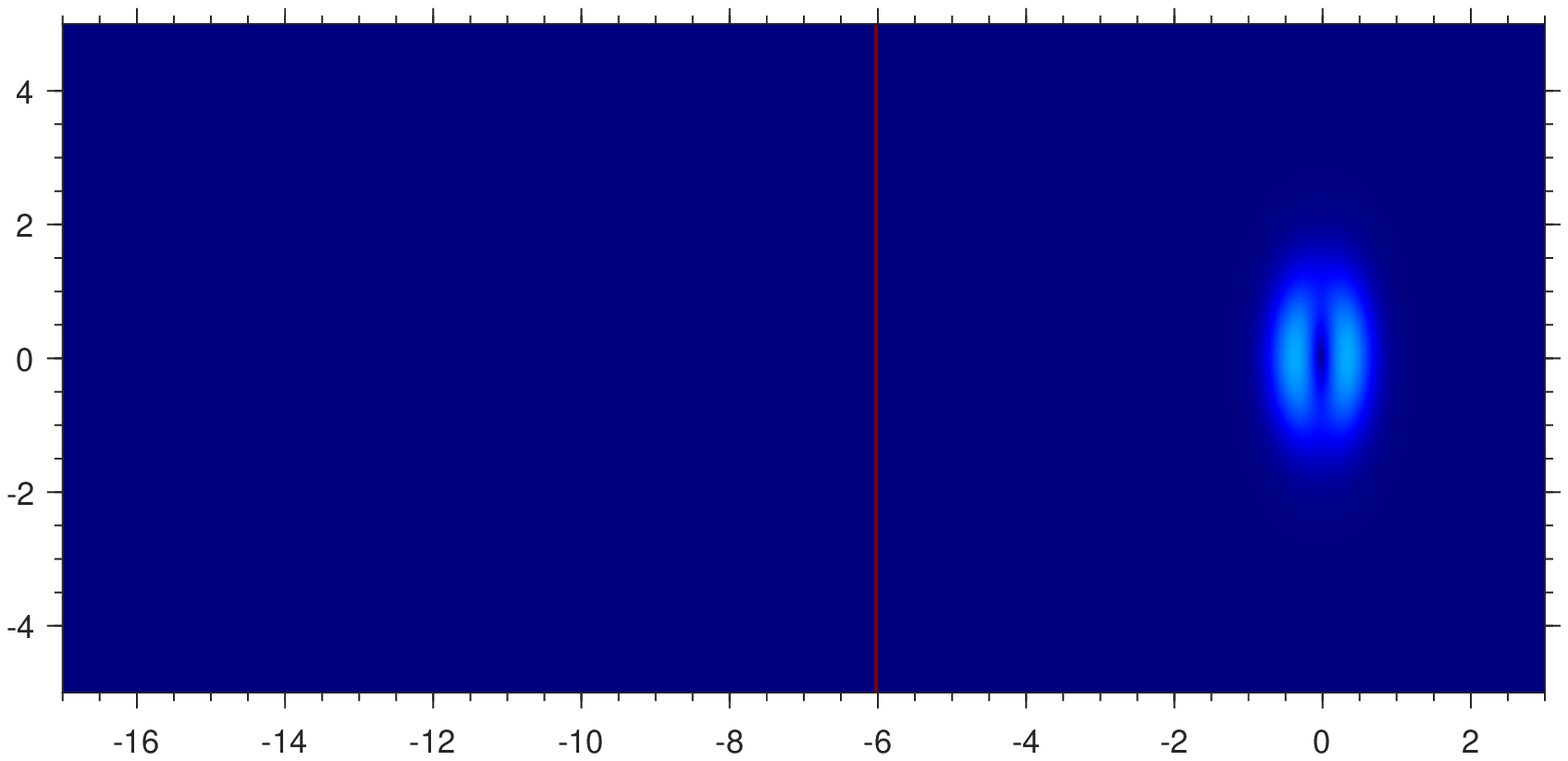}\label{fig:shockvortex1rho0}
	}
	\subfloat[$\log_{10}(1+\left | \nabla \rho\right |)$ at $t=19$]{
		\includegraphics[width=0.45\textwidth]{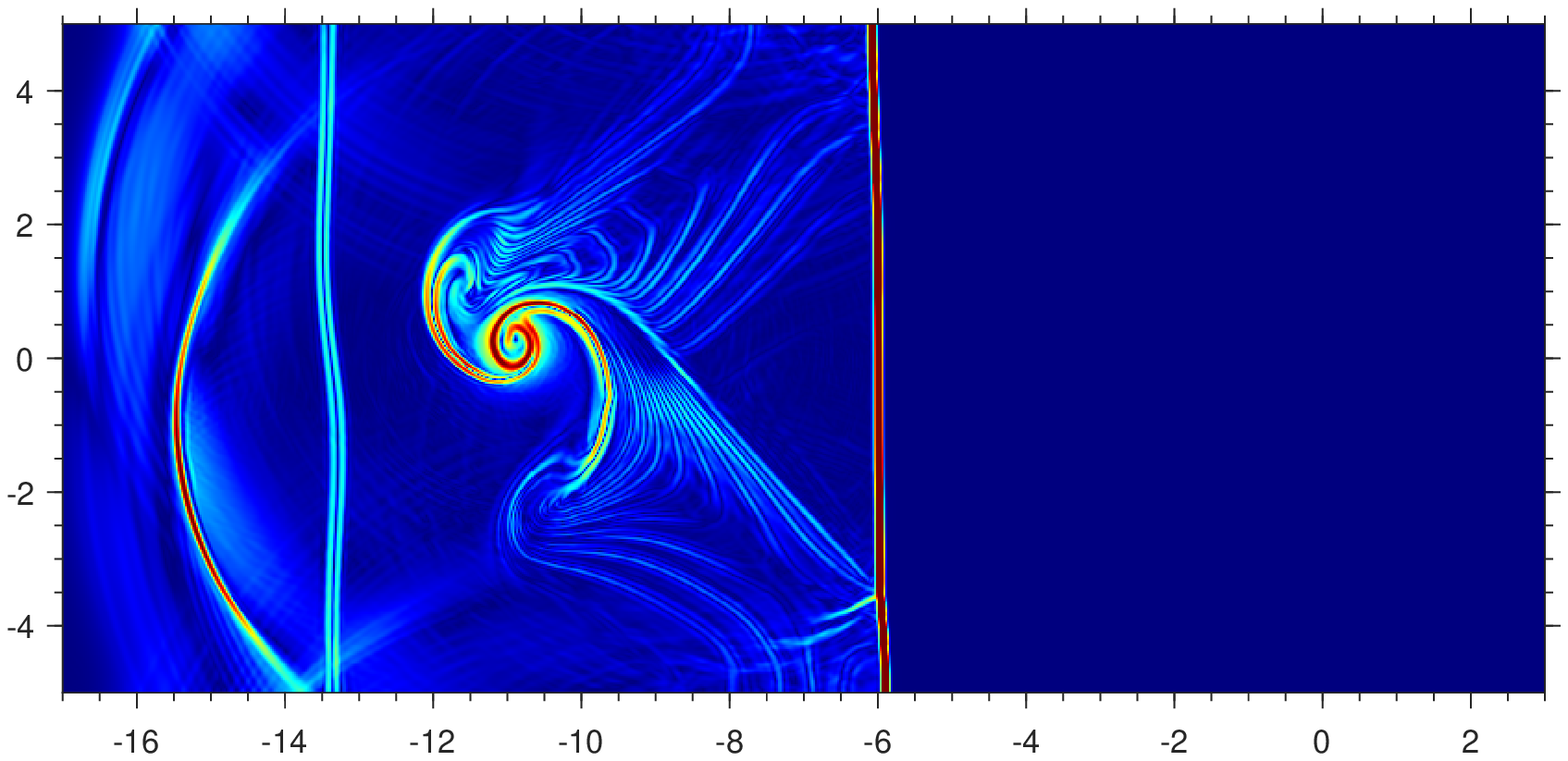}\label{fig:shockvortex1rho19}
	}

	\subfloat[$\left | \nabla  p \right |$ at $t=0$]{
		\includegraphics[width=0.45\textwidth]{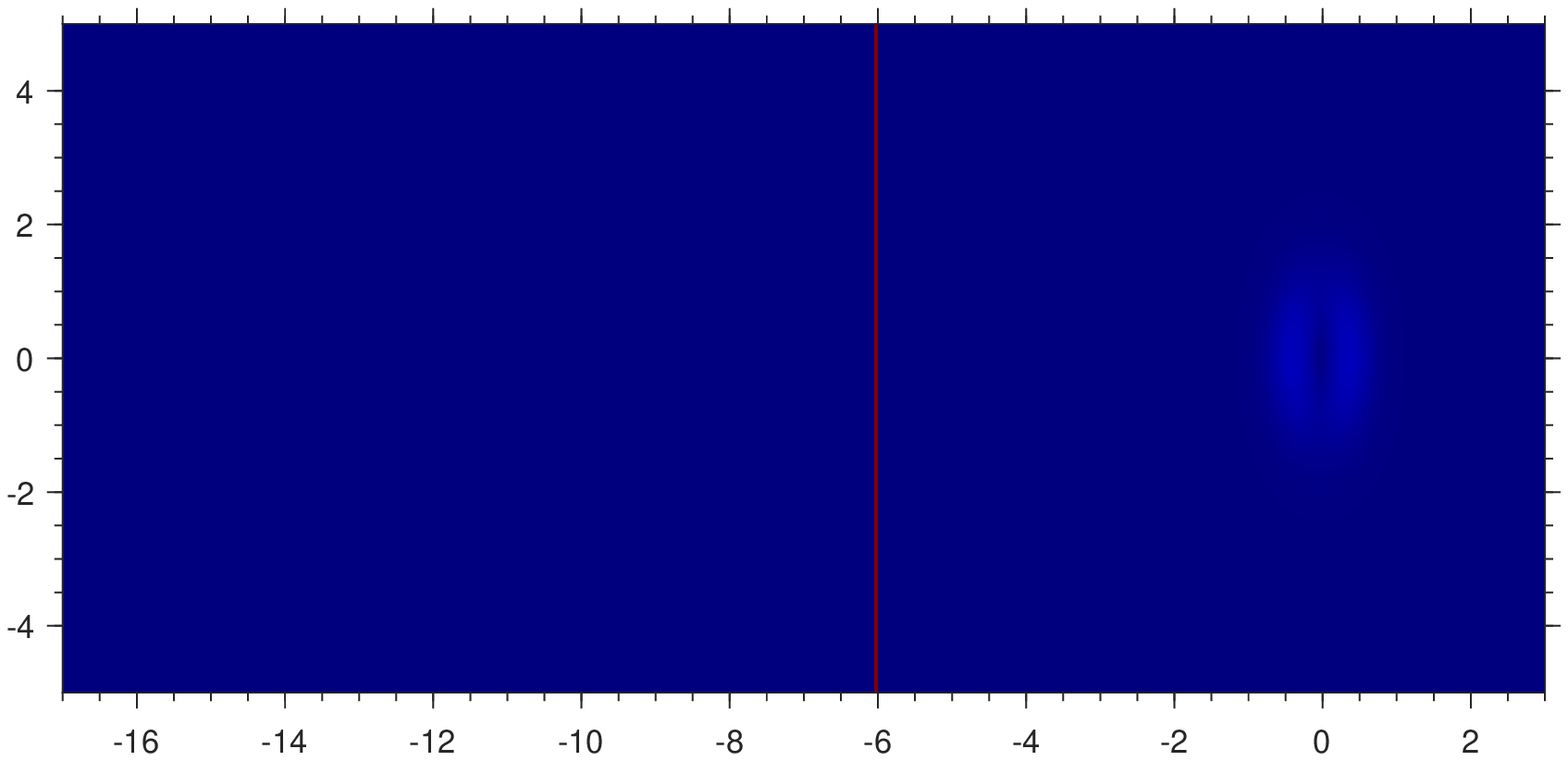}\label{fig:shockvortex1p0}
	}
	\subfloat[$\left | \nabla  p \right |$ at $t=19$]{
		\includegraphics[width=0.45\textwidth]{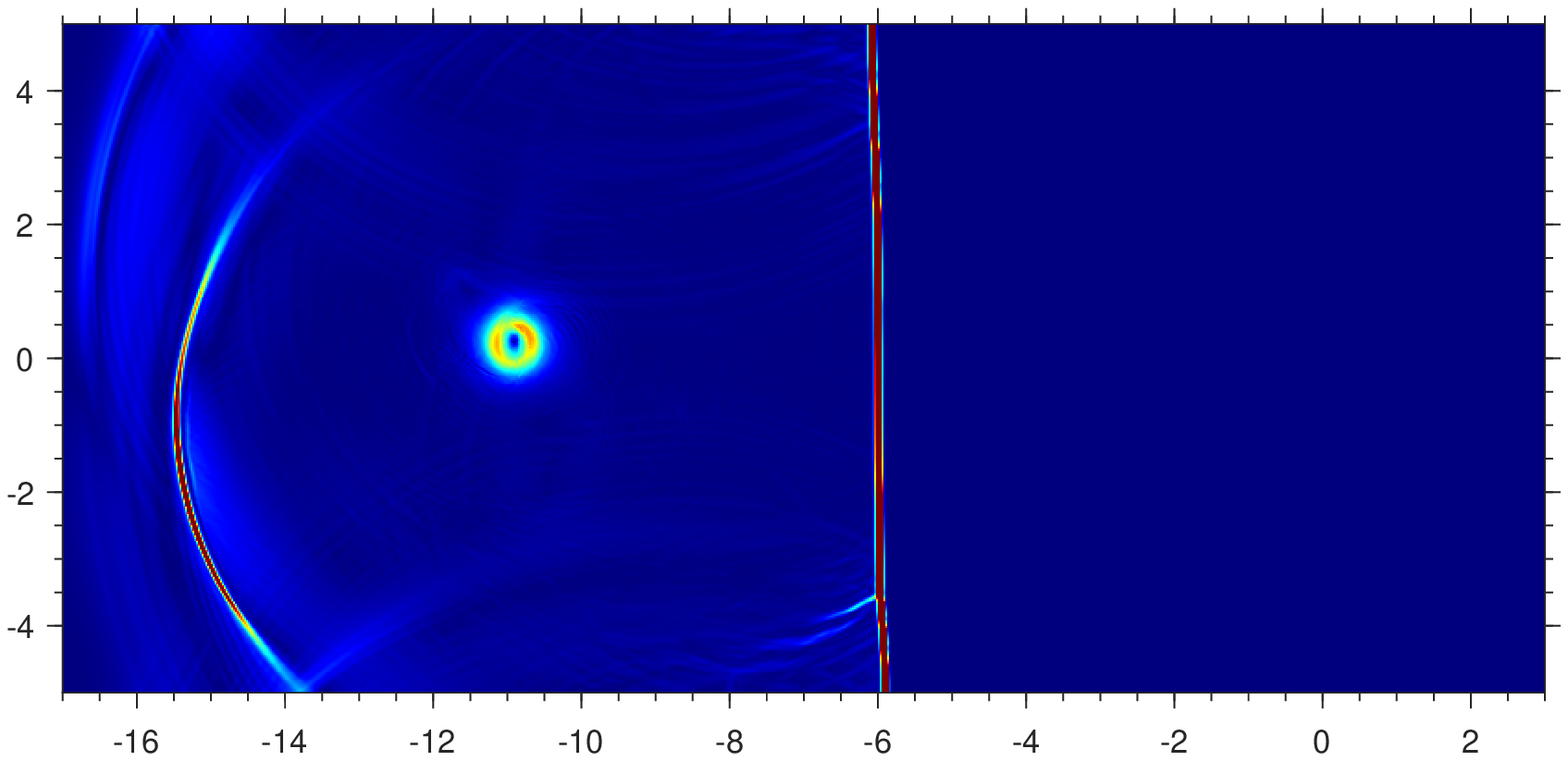}\label{fig:shockvortex1p19}
	}


	\captionsetup{font=small}
	\caption{Example \ref{test:shockvortex interaction}\label{fig:shockvortex1} with the mild vortex: The schlieren images of $\log_{10}(1+\left | \nabla  \rho\right |)$ from 0 to 1 and schlieren images of $\left | \nabla  p\right |$ from 0 to 20. }
\end{figure}

\begin{figure}[!htb]
	\centering
	\subfloat[$\log_{10}(1+\left | \nabla \rho\right |)$ at $t=0$]{
		\includegraphics[width=0.45\textwidth]{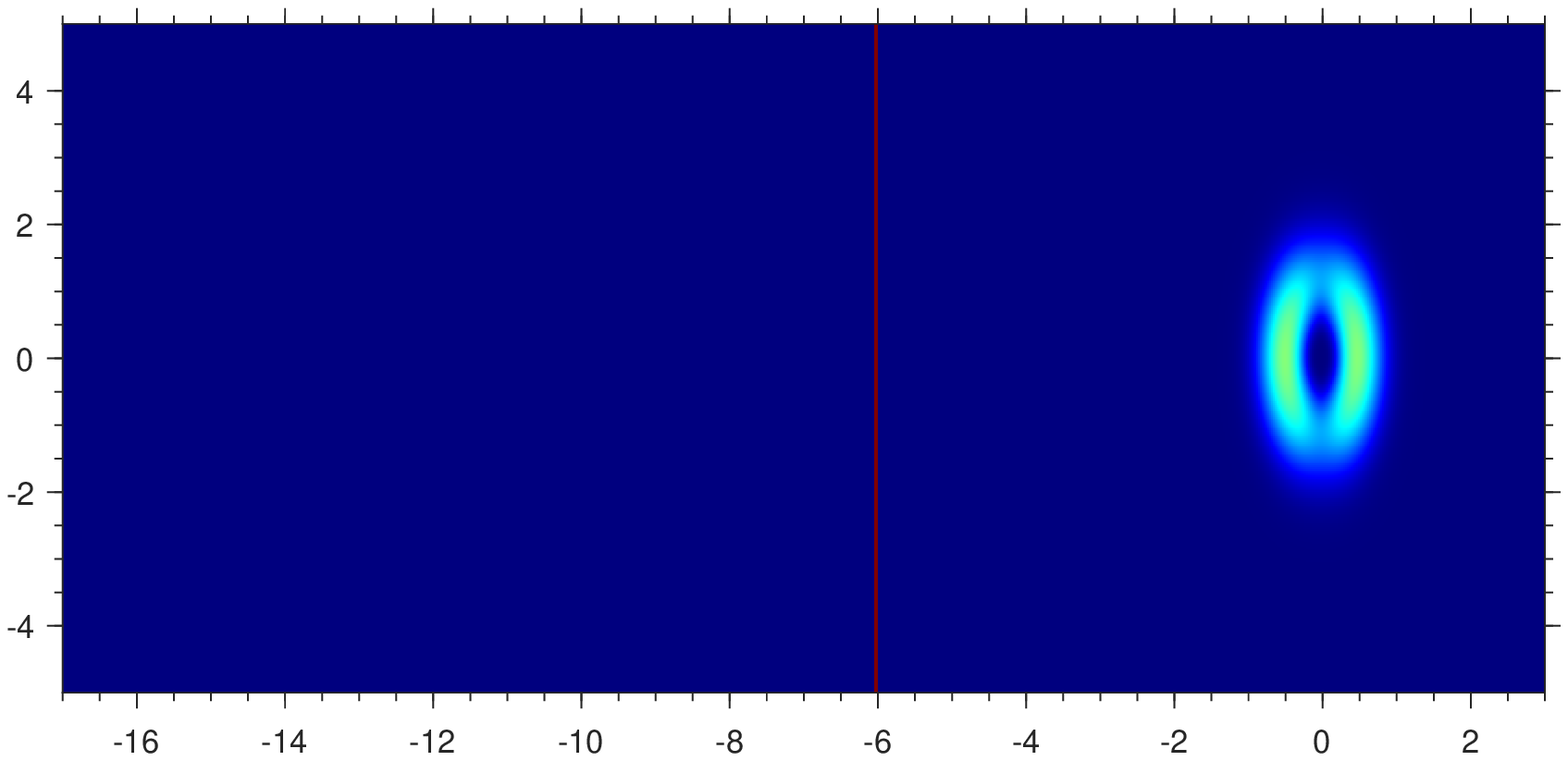}\label{fig:shockvortex2rho0}
	}
	\subfloat[$\log_{10}(1+\left | \nabla \rho\right |)$ at $t=19$]{
		\includegraphics[width=0.45\textwidth]{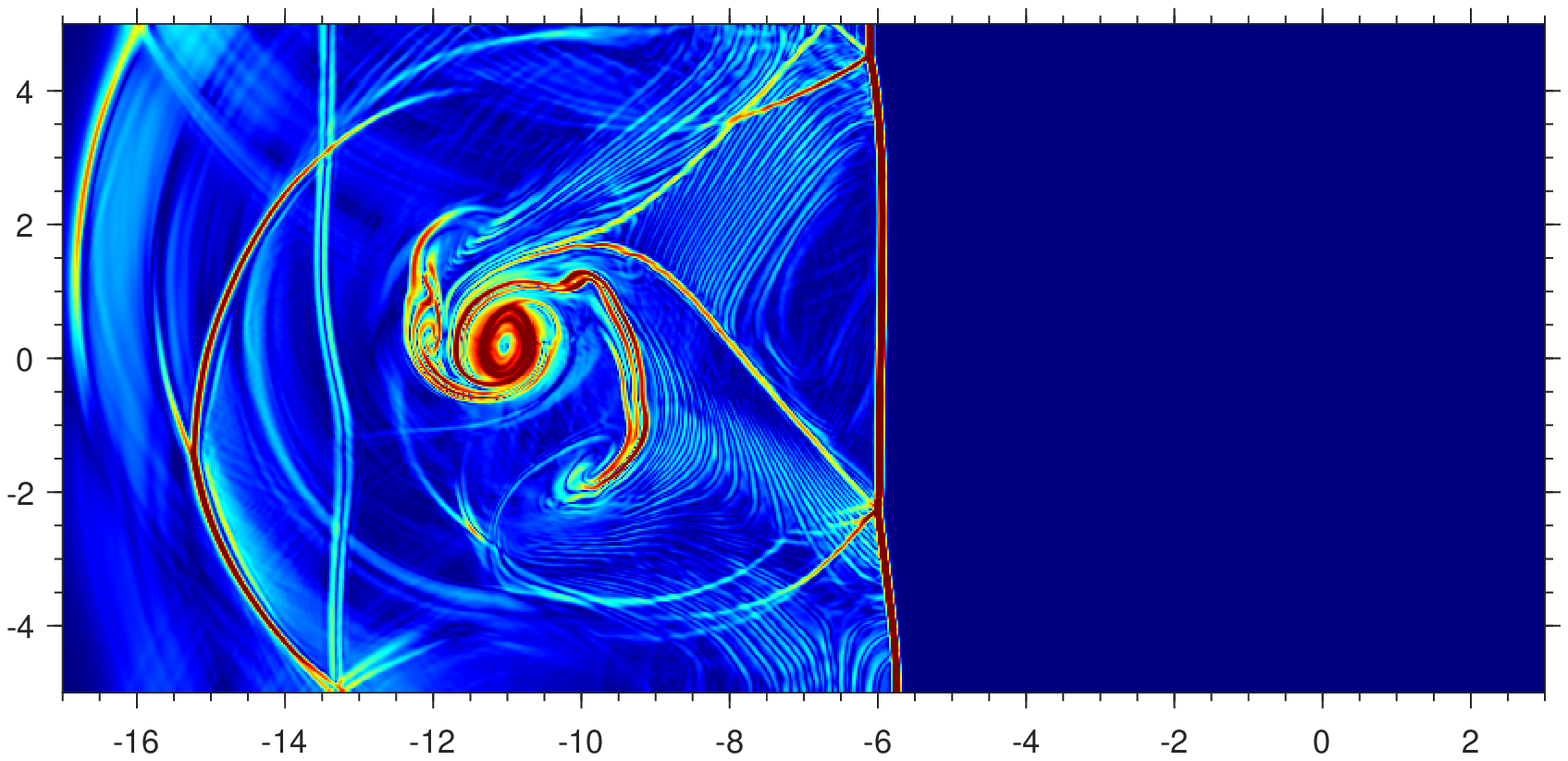}\label{fig:shockvortex2rho19}
	}
	
	\subfloat[$\left | \nabla  p \right |$ at $t=0$]{
		\includegraphics[width=0.45\textwidth]{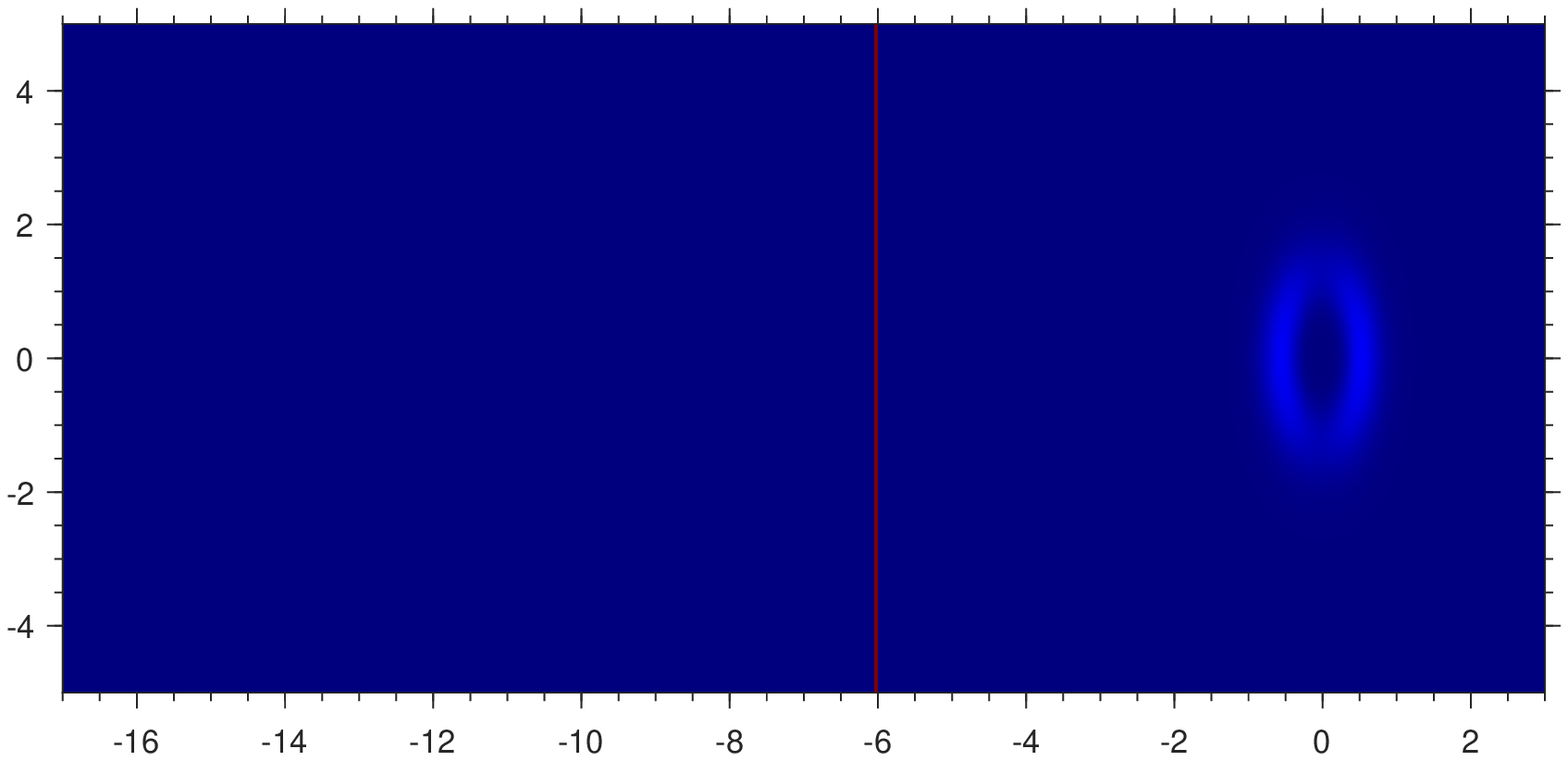}\label{fig:shockvortex2p0}
	}
	\subfloat[$\left | \nabla  p \right |$ at $t$=19]{
		\includegraphics[width=0.45\textwidth]{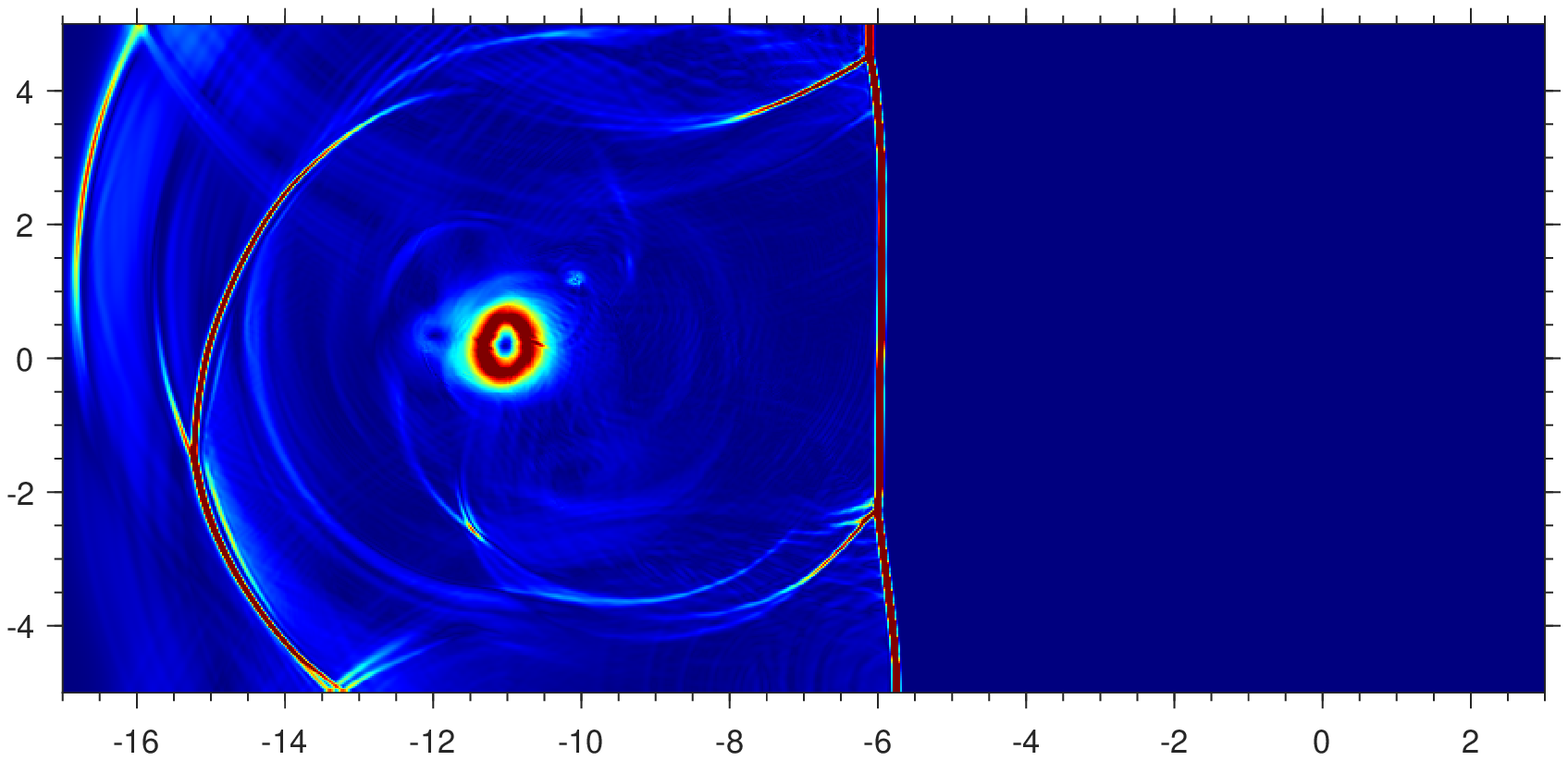}\label{fig:shockvortex2p19}
	}
	

		\captionsetup{font=small}
		\caption{Example \ref{test:shockvortex interaction}\label{fig:shockvortex2} with the demanding vortex: the schlieren images of $\log_{10}(1+\left | \nabla  \rho\right |)$ from 0 to 1 and schlieren images of $\left | \nabla  p\right |$ from 0 to 20. } 
\end{figure}

\end{exa}

\begin{exa} [Axisymmetric relativistic jets]\label{test:jet} 
	This test	simulate a relativistic jet by solving the RHD equations in cylindrical coordinates (see Section \ref{section:cylindrical coordinates} for details). Relativistic jet flows have been extensively investigated by many researchers  \cite{marti1997morphology, 2015High, zhang2006ram, qin2016bound, chen2022physical}. The computational domain consists of a 2D cylindrical box with dimensions of $(0\leq r\leq15,~0\leq z\leq45)$, which is discretized into $375 \times 1125$ uniform cells.
	The initial conditions in the computational domain are given by 
	\begin{equation*}
		\bm{Q}(r,z,0)=(1,0,0,1.70303\times 10^{-4})^\top.
	\end{equation*}
	The relativistic jet beam has a velocity $v_b=0.99$, density $\rho_b=0.01$, and pressure $p_b=1.70303\times 10^{-4}$. The jet is injected through the inlet part $(r\leq 1)$ of the low-$z$  boundary. At the symmetric axis $r=0$, we use reflection conditions, and at the other parts of the boundaries, we impose outflow boundary conditions. It is worth noting that simulating such jets successfully is challenging because they contain ultra-relativistic regions, strong relativistic shock waves, shear flows, and interface instabilities. 
	
	We present in Figure \ref{fig:jet} the schlieren images of the density logarithm $\ln \rho$ 
	 along with the PCP limited cells, obtained using our PCP HWENO scheme at $t=60, 80, 100$. The images clearly demonstrate the formation of a bow shock by the moving jet. The discontinuity between the jet material and the initial static material gives rise to Kelvin--Helmholtz instabilities.  These results agree well with those reported in previous studies \cite{marti1997morphology, 2015High, zhang2006ram, chen2022physical}. 
	 One can see that the PCP limiter is necessary in this challenging test, while only a small portion of cells are limited during the simulations.

	\begin{figure}[!htb]
		\centering
		\subfloat[$\ln\rho$ at $t=60$]{
			\includegraphics[width=0.30\textwidth]{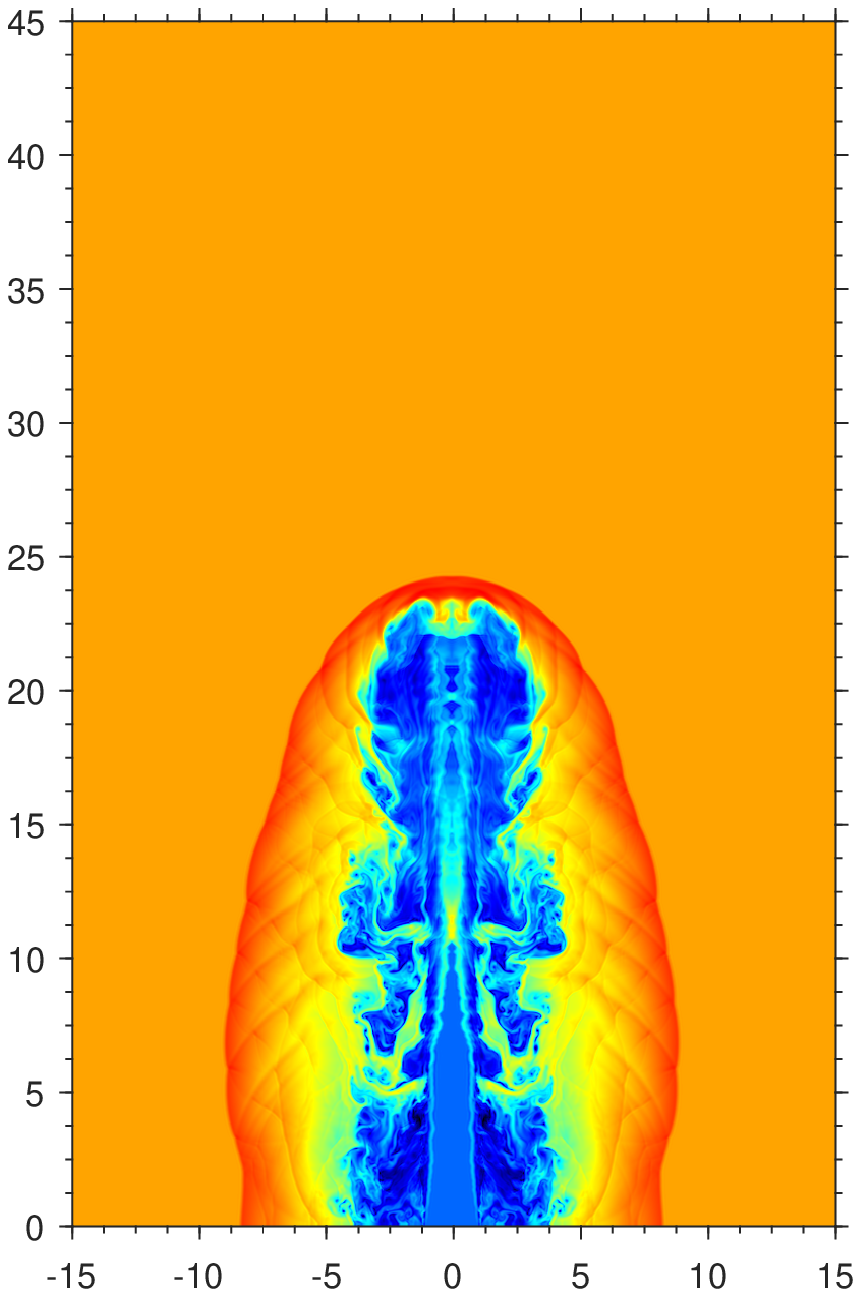}\label{fig:jet_rho60}
		}
		\subfloat[$\ln\rho$ at $t=80$]{
			\includegraphics[width=0.30\textwidth]{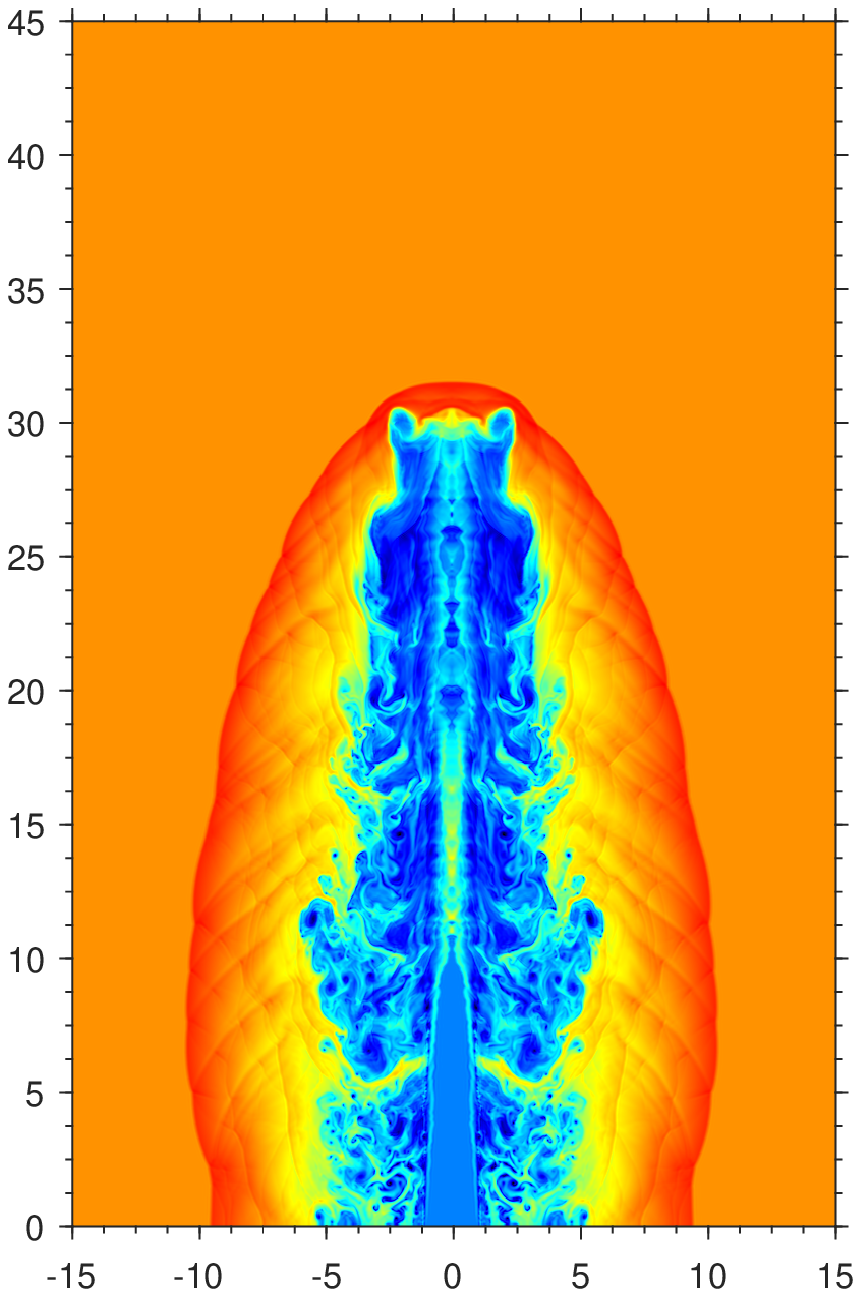}\label{fig:jet_rho80}
		}
		\subfloat[$\ln\rho$ at $t=100$]{
			\includegraphics[width=0.30\textwidth]{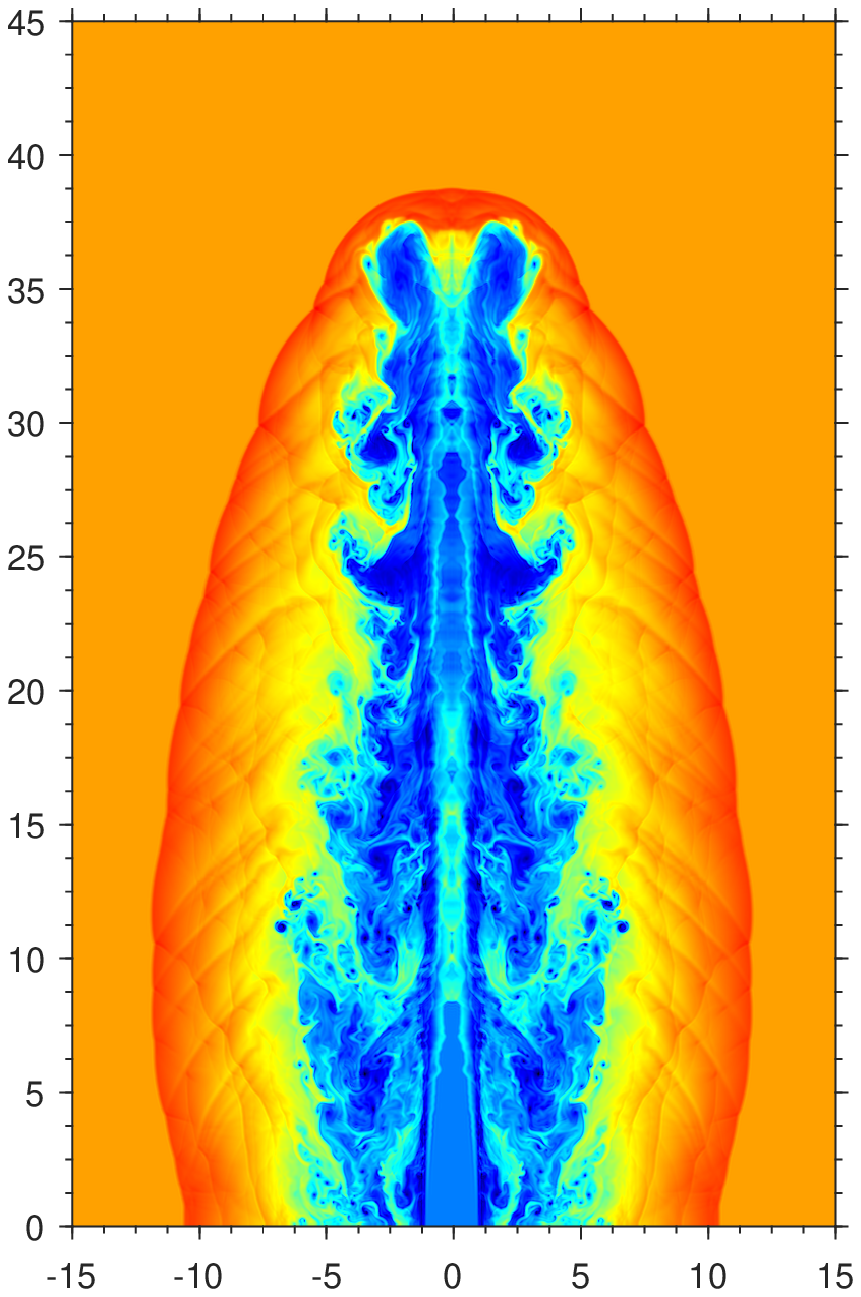}\label{fig:jet_rho100}
		}
	
		\subfloat[PCP limited cells at $t=60$]{
			\includegraphics[width=0.30\textwidth]{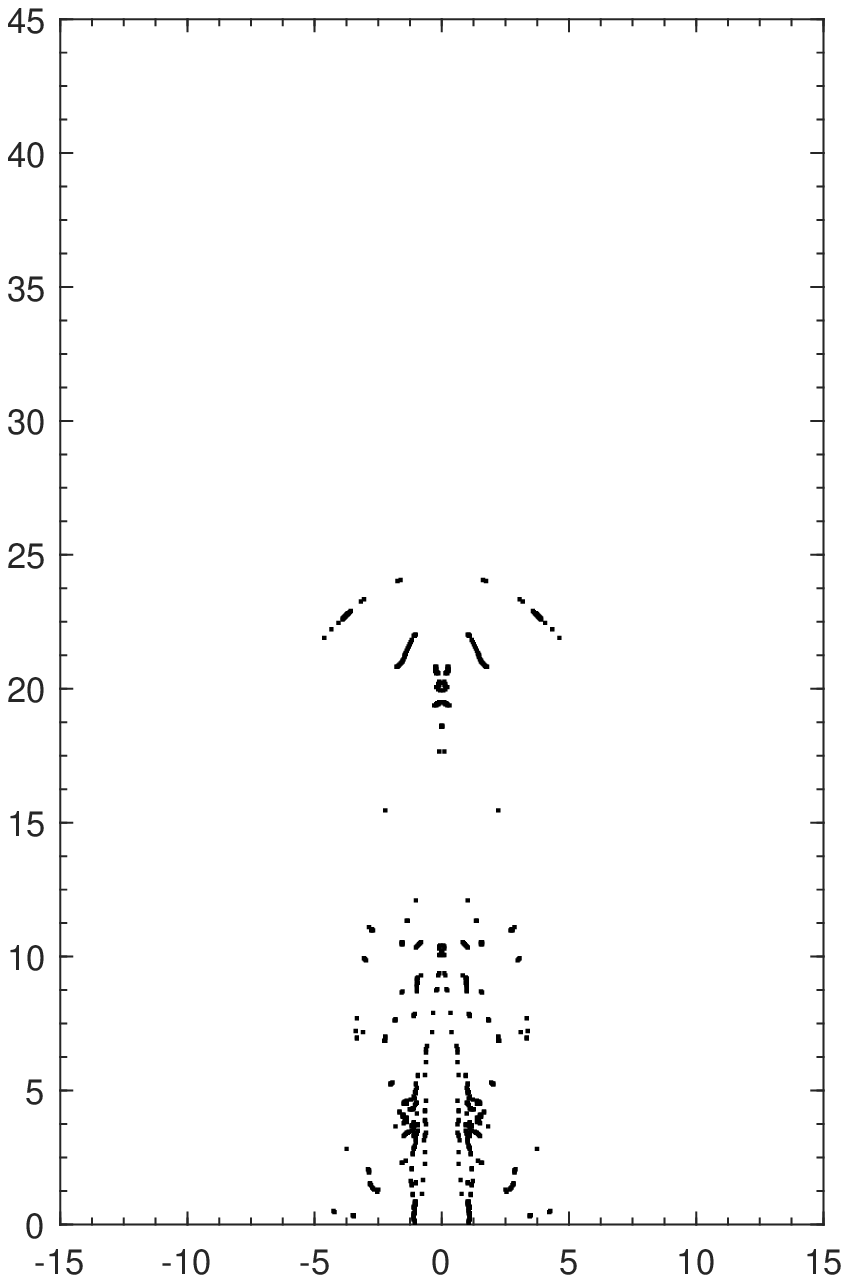}\label{fig:jet_tc60}
		}
		\subfloat[PCP limited cells at $t=80$]{
			\includegraphics[width=0.30\textwidth]{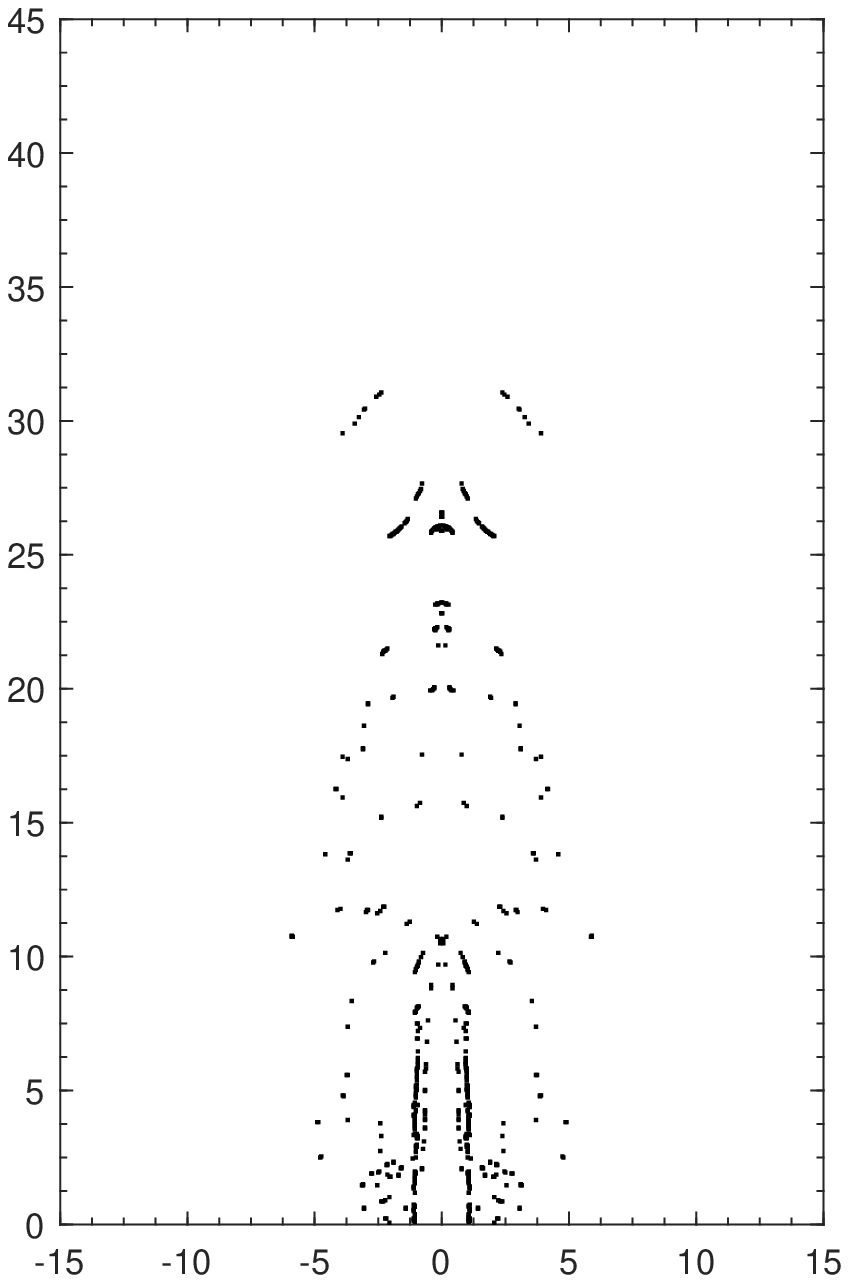}\label{fig:jet_tc80}
		}
		\subfloat[PCP limited cells at $t=100$]{
			\includegraphics[width=0.30\textwidth]{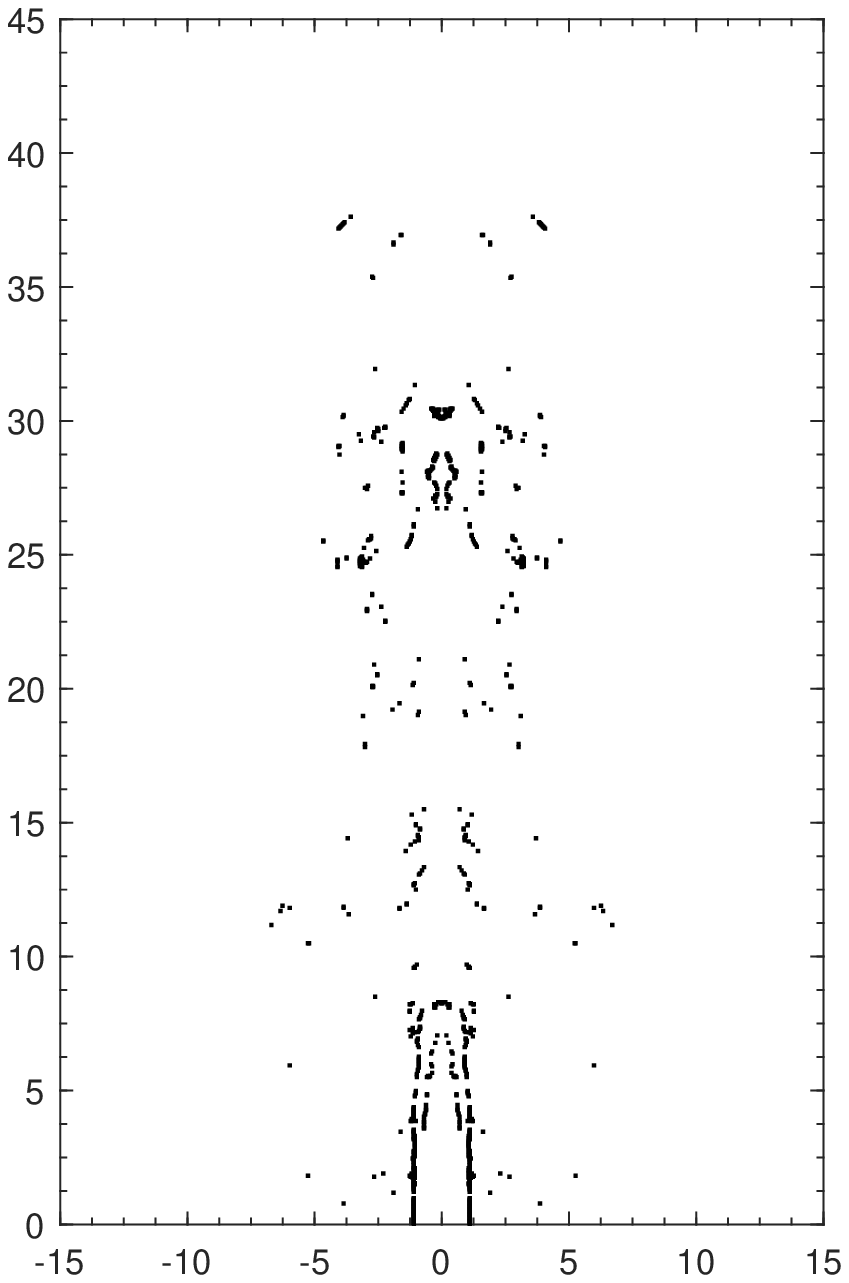}\label{fig:jet_tc100}
		}
		\captionsetup{font=small}
		\caption{Example \ref{test:jet}: The schlieren images of the density logarithm $\ln \rho$ and the PCP limited cells.}\label{fig:jet}
	\end{figure}

\end{exa}

\section{Conclusion}\label{sec:conclusion}

Designing genuinely PCP schemes is a challenging task, as relativistic effects make it difficult to reformulate primitive variables explicitly using conservative variables. 
This paper proposed three efficient NR methods for robustly recovering primitive variables from conservative variables, with applications to developing PCP finite volume HWENO schemes for relativistic hydrodynamics. 
Our rigorous analysis demonstrated that these NR methods are always convergent and PCP, meaning that they preserve the physical constraints throughout the NR iterations. The discovery of these robust NR methods and their convergence analysis are very nontrivial.  
The presented PCP HWENO schemes were built on the NR methods, high-order HWENO reconstruction, a PCP limiter, and strong-stability-preserving time discretization. We rigorously demonstrated the PCP property of our schemes using convex decomposition techniques. In addition, we proposed the characteristic decomposition approach with rescaled eigenvectors and scale-invariant nonlinear weights to improve the performance of the HWENO schemes in simulating large-scale RHD problems. 
Several challenging numerical examples were provided to evaluate the robustness, accuracy, and high resolution of our PCP HWENO schemes and to demonstrate the efficiency of the proposed NR methods.

\appendix

\section{2D Linear reconstruction operator $\bm{M}_L$}\label{app1}

For convenience, we number the night cells around the cell $I_{i,j}$, as shown in Figure \ref{FIG:2Dneighbour}. We denote $\xi(x)=\frac{x-x_i}{\Delta x}$, $\eta(y)=\frac{y-y_j}{\Delta y}$. Let us reconstruct a quartic polynomial $P_0(x,y)=\sum\limits_{s=0}^{4-r}\sum\limits_{r=0}^{4}a_{s,r}^0\xi(x)^s\eta(y)^r$ satisfying
\begin{eqnarray}\label{2Dlinear rec equ}
	\left\{
	\begin{aligned}
		&\frac{1}{\Delta x\Delta y}\int_{I_{i,j}^k}P_0(x,y)dxdy=u_k,\quad k=1,\dots,9,\\
		&\frac{1}{\Delta x\Delta y}\int_{I_{i,j}^5}P_0(x,y)\frac{x-x_i}{\Delta x}dxdy=v_5,\\
		&\frac{1}{\Delta x\Delta y}\int_{I_{i,j}^5}P_0(x,y)\frac{y-y_j}{\Delta y}dxdy=w_5
	\end{aligned}
	\right.
\end{eqnarray}
and minimizing
\begin{equation}\label{2Dlinear rec mini}
	\sqrt{\!\sum_{k=2,4,6,8}\!\left[\left(\frac{1}{\Delta x\Delta y}\int_{I_{i,j}^k}P_0(x,y)\frac{x-x_i}{\Delta x}dxdy\!-\!v_k\right)^2\!+\!\left(\frac{1}{\Delta x\Delta y}\int_{I_{i,j}^k}P_0(x,y)\frac{y-y_j}{\Delta x}dxdy\!-\!w_k\right)^2\right]},
\end{equation}
where $u_k$, $v_k$, $w_k$ are any given real numbers. The conditions \eqref{2Dlinear rec equ}--\eqref{2Dlinear rec mini} form a constrained least squares problem for the unknowns $\left\{a_{s,r}^0\right\}_{s,r=0}^4$. Solving this problem with the nullspace method (see \cite[Section 5.1.3]{bjorck1996numerical} for details) gives the expressions of $\{a_{s,r}^0\}$, which are the linear combinations of
\begin{equation*}
	\begin{bmatrix}
		u_7&u_8&u_9\\
		u_4&u_5&u_6\\
		u_1&u_2&u_3
	\end{bmatrix},~
	\begin{bmatrix}
		&v_8&\\
		v_4&v_5&v_6\\
		&v_2&
	\end{bmatrix},~\text{and }
	\begin{bmatrix}
		&w_8&\\
		w_4&w_5&w_6\\
		&w_2&
	\end{bmatrix}.
\end{equation*}
In order to save space, we here omit the specific expressions of $a_{s,r}^0$.
Define the operator
\begin{equation*}
	M_L([u_1,\dots,u_9],[v_2~v_4~v_5~v_6~v_8],[w_2~w_4~w_5~w_6~w_8],\xi,\eta):=P_0(x(\xi),y(\eta))=\sum\limits_{s=0}^{4-r}\sum\limits_{r=0}^{4}a_{s,r}^0\xi^s\eta^r
\end{equation*}
which is a mapping from $\mathbb{R}^{1\times9}\times\mathbb{R}^{1\times5}\times\mathbb{R}^{1\times5}\times\mathbb{R}\times\mathbb{R}$ to $\mathbb{R}$. 
Using this operator, it is convenient to compute the value of $P_0(x,y)$ at $(x_{i+\xi},y_{j+\eta})$ with
\begin{equation*}
	P_0(x_{i+\xi},y_{j+\eta})=M_L([u_1,\dots,u_9],[v_2~v_4~v_5~v_6~v_8],[w_2~w_4~w_5~w_6~w_8],\xi,\eta).
\end{equation*}

The operator $M_L$ represents the reconstruction mapping for the scalar equation. In order to extend the reconstruction to the 2D RHD equations, we generalize it to vector cases component-wisely as follows
\begin{equation*}
	\begin{aligned}
		&\bm{M}_L\left(\left[\bm{U}_1,\dots,\bm{U}_9\right],\left[\bm{V}_2~\bm{V}_4~\bm{V}_5~\bm{V}_6~\bm{V}_8\right],\left[\bm{W}_2~\bm{W}_4~\bm{W}_5~\bm{W}_6~\bm{W}_8\right],\xi,\eta\right)\\
		:=&\begin{pmatrix}
			M_L\left(\left[{U}_1^{(1)},\dots,{U}_9^{(1)}\right],\left[{V}_2^{(1)}~{V}_4^{(1)}~{V}_5^{(1)}~{V}_6^{(1)}~{V}_8^{(1)}\right],\left[{W}_2^{(1)}~{W}_4^{(1)}~{W}_5^{(1)}~{W}_6^{(1)}~{W}_8^{(1)}\right],\xi,\eta\right)
			\\
			M_L\left(\left[{U}_1^{(2)},\dots,{U}_9^{(2)}\right],\left[{V}_2^{(2)}~{V}_4^{(2)}~{V}_5^{(2)}~{V}_6^{(2)}~{V}_8^{(2)}\right],\left[{W}_2^{(2)}~{W}_4^{(2)}~{W}_5^{(2)}~{W}_6^{(2)}~{W}_8^{(2)}\right],\xi,\eta\right)
			\\
			M_L\left(\left[{U}_1^{(3)},\dots,{U}_9^{(3)}\right],\left[{V}_2^{(3)}~{V}_4^{(3)}~{V}_5^{(3)}~{V}_6^{(3)}~{V}_8^{(3)}\right],\left[{W}_2^{(3)}~{W}_4^{(3)}~{W}_5^{(3)}~{W}_6^{(3)}~{W}_8^{(3)}\right],\xi,\eta\right)
			\\
			M_L\left(\left[{U}_1^{(4)},\dots,{U}_9^{(4)}\right],\left[{V}_2^{(4)}~{V}_4^{(4)}~{V}_5^{(4)}~{V}_6^{(4)}~{V}_8^{(4)}\right],\left[{W}_2^{(4)}~{W}_4^{(4)}~{W}_5^{(4)}~{W}_6^{(4)}~{W}_8^{(4)}\right],\xi,\eta\right)
		\end{pmatrix},
	\end{aligned}
\end{equation*}
where $\bm{M}_L$ is the reconstruction operator from $\mathbb{R}^{4\times9}\times\mathbb{R}^{4\times5}\times\mathbb{R}^{4\times5}\times\mathbb{R}\times\mathbb{R}$ to $\mathbb{R}^{4\times1}$. It is worth pointing out that $\bm{M}_L(\cdot,\cdot,\cdot,\xi,\eta)$ is a linear mapping for fixed $\xi$ and $\eta$,

\section{2D HWENO reconstruction operator $\bm{M}_H$}\label{app2}

Reconstruct four quadratic polynomials $P_n(x,y):=\sum\limits_{s=0}^{2-r}\sum\limits_{r=0}^{2}a_{s,r}^r\xi(x)^s\eta(y)^r$ $(r=1,2,3,4)$ satisfying 
\begin{eqnarray}\label{2Dlowlinear rec equ}
	\left\{
	\begin{aligned}
		&\frac{1}{\Delta x\Delta y}\int_{I_{i,j}^k}P_1(x,y)dxdy=u_k,\quad k=1,2,4,5,\\
		&\frac{1}{\Delta x\Delta y}\int_{I_{i,j}^k}P_2(x,y)dxdy=u_k,\quad k=2,3,5,6,\\
		&\frac{1}{\Delta x\Delta y}\int_{I_{i,j}^k}P_3(x,y)dxdy=u_k,\quad k=4,5,7,8,\\
		&\frac{1}{\Delta x\Delta y}\int_{I_{i,j}^k}P_4(x,y)dxdy=u_k,\quad k=5,6,8,9,\\
		&\frac{1}{\Delta x\Delta y}\int_{I_{i,j}^5}P_n(x,y)\frac{x-x_i}{\Delta x}dxdy=v_5,\quad n=1,2,3,4,\\
		&\frac{1}{\Delta x\Delta y}\int_{I_{i,j}^5}P_n(x,y)\frac{y-y_j}{\Delta y}dxdy=w_5,\quad n=1,2,3,4.
	\end{aligned}
	\right.
\end{eqnarray}
Similarly, we can obtain the expressions of $a_{s,r}^n$ ($n=1,2,3,4$, $s,r=1,2$), which are linear combinations of 
\begin{equation*}
	\begin{bmatrix}
		u_7&u_8&u_9\\
		u_4&u_5&u_6\\
		u_1&u_2&u_3
	\end{bmatrix},~
	v_5,~\text{and }
	w_5,
\end{equation*}

Next, in order to measure the smoothness of the polynomial $P_n\left ( x,y\right )$ in the cell $I_{i,j}$, we calculate the smooth indicators, with the same definition as in \cite{zhao2020hermite},
\begin{equation*}\label{smooth indicator2D}
	\beta_n=\sum_{|l|=1}^{r}|I_{i,j}|^{|l|-1}\int_{I_i,j}\left (\frac{\partial^{|l|}}{\partial x^{l_1}\partial y^{l_2}}P_n(x,y)\right)^2{\mathrm{d}x\mathrm{d}y},~n=0,\dots,4,
\end{equation*}
where $r$ is the degree of the polynomials $P_n(x,y)$. Then the HWENO reconstruction polynomial is defined by
\begin{equation}
	P_{H}\left(x,y\right)=\omega_0 \left(\frac{1}{\gamma_0}P_0\left(x,y\right)-\sum_{n=1}^4\frac{\gamma_n}{\gamma_0}P_n\left(x,y\right)\right)+\sum_{n=1}^4\omega_nP_n\left(x,y\right), \nonumber
\end{equation}
\noindent where the nonlinear weights
\begin{equation}\label{nonlinear weights 2_2D}
	\omega_n=\frac{\bar{\omega}_n}{\sum_{k=0}^4\bar{\omega}_k}\quad \text{with} \quad \bar{\omega}_n=\gamma_n\left(1+\frac{\tau^2}{\left(\beta_n\right)^2+\epsilon}\right), n=0,\dots,4,
\end{equation}
and $\tau:=\left(\frac{\sum_{n=0}^{4}|\beta_0-\beta_n|}{4}\right)$. Similar to the 1D case, these nonlinear weights possess the ``scaling-invariant'' property. 

Define the operator
\begin{equation*}
	M_H([u_1,\dots,u_9],[v_2~v_4~v_5~v_6~v_8],[w_2~w_4~w_5~w_6~w_8],\xi,\eta):=P_H(x(\xi),y(\eta)),
\end{equation*} 
which is a mapping from $\mathbb{R}^{1\times9}\times\mathbb{R}^{1\times5}\times\mathbb{R}^{1\times5}\times\mathbb{R}\times\mathbb{R}$ to $\mathbb{R}$. It is easy to compute the value of $P_H(x,y)$ at $(x_{i+\xi},y_{j+\eta})$ with
\begin{equation*}
	P_H(x_{i+\xi},y_{j+\eta})=M_H([u_1,\dots,u_9],[v_2~v_4~v_5~v_6~v_8],[w_2~w_4~w_5~w_6~w_8],\xi,\eta).
\end{equation*}
We can generalize the scalar HWENO reconstruction operator $M_H$ to the vector cases in a component by component manner:
\begin{equation*}
	\begin{aligned}
		&\bm{M}_H\left(\left[\bm{U}_1,\dots,\bm{U}_9\right],\left[\bm{V}_2~\bm{V}_4~\bm{V}_5~\bm{V}_6~\bm{V}_8\right],\left[\bm{W}_2~\bm{W}_4~\bm{W}_5~\bm{W}_6~\bm{W}_8\right],\xi,\eta\right)\\
		:=&\begin{pmatrix}
			M_H\left(\left[{U}_1^{(1)},\dots,{U}_9^{(1)}\right],\left[{V}_2^{(1)}~{V}_4^{(1)}~{V}_5^{(1)}~{V}_6^{(1)}~{V}_8^{(1)}\right],\left[{W}_2^{(1)}~{W}_4^{(1)}~{W}_5^{(1)}~{W}_6^{(1)}~{W}_8^{(1)}\right],\xi,\eta\right)
			\\
			M_H\left(\left[{U}_1^{(2)},\dots,{U}_9^{(2)}\right],\left[{V}_2^{(2)}~{V}_4^{(2)}~{V}_5^{(2)}~{V}_6^{(2)}~{V}_8^{(2)}\right],\left[{W}_2^{(2)}~{W}_4^{(2)}~{W}_5^{(2)}~{W}_6^{(2)}~{W}_8^{(2)}\right],\xi,\eta\right)
			\\
			M_H\left(\left[{U}_1^{(3)},\dots,{U}_9^{(3)}\right],\left[{V}_2^{(3)}~{V}_4^{(3)}~{V}_5^{(3)}~{V}_6^{(3)}~{V}_8^{(3)}\right],\left[{W}_2^{(3)}~{W}_4^{(3)}~{W}_5^{(3)}~{W}_6^{(3)}~{W}_8^{(3)}\right],\xi,\eta\right)
			\\
			M_H\left(\left[{U}_1^{(4)},\dots,{U}_9^{(4)}\right],\left[{V}_2^{(4)}~{V}_4^{(4)}~{V}_5^{(4)}~{V}_6^{(4)}~{V}_8^{(4)}\right],\left[{W}_2^{(4)}~{W}_4^{(4)}~{W}_5^{(4)}~{W}_6^{(4)}~{W}_8^{(4)}\right],\xi,\eta\right)
		\end{pmatrix},
	\end{aligned}
\end{equation*}
where ${U}_{i,j}^{(\ell)}$ is the $\ell$th component of ${\bm U}_{i,j}$, ${V}_{i,j}^{(\ell)}$ is the $\ell$th component of ${\bm V}_{i,j}$, ${W}_{i,j}^{(\ell)}$ is the $\ell$th component of ${\bm W}_{i,j}$. Different from $\bm{M}_L$, the operator  
$\bm{M}_H(\cdot,\dots,\cdot,\xi,\eta)$ is a nonlinear mapping for fixed $\xi$ and $\eta$.


\bibliographystyle{abbrv}
\bibliography{ref}
\end{document}